 \def \no{\nonumber}
\def\R {\Bbb R}
\def\p{\partial}
\def\ve{\varepsilon}
\def\f{\frac}
\def\al{\alpha}
\def\t{\tilde}
\def\g{\gamma}
\def\dl{\delta}
\def\ds{\displaystyle}
\newcommand{\md}{\mathrm{d}}
\newcommand{\lc}{\left(}
\newcommand{\rc}{\right)}
\begin{document}
 \footskip=0pt
 \footnotesep=2pt
\let\oldsection\section
\renewcommand\section{\setcounter{equation}{0}\oldsection}
\renewcommand\thesection{\arabic{section}}
\renewcommand\theequation{\thesection.\arabic{equation}}
\newtheorem{claim}{\noindent Claim}[section]
\newtheorem{theorem}{\noindent Theorem}[section]
\newtheorem{lemma}{\noindent Lemma}[section]
\newtheorem{proposition}{\noindent Proposition}[section]
\newtheorem{definition}{\noindent Definition}[section]
\newtheorem{remark}{\noindent Remark}[section]
\newtheorem{corollary}{\noindent Corollary}[section]
\newtheorem{example}{\noindent Example}[section]

\title{On the global solution problem of semilinear generalized Tricomi equations, II}

\author{He,
Daoyin$^{1, 2*}$;\qquad Witt, Ingo$^{2}$; \qquad Yin,
Huicheng$^{3, }$\footnote{Daoyin He (daoyin.He@mathematik.uni-goettingen.de) and  Huicheng Yin
(huicheng@nju.edu.cn, 05407@njnu. edu.cn) are
supported by the NSFC (No.~11571177),
and by the Priority
Academic Program Development of Jiangsu Higher Education
Institutions.  Ingo Witt (iwitt@uni-math.gwdg.de) was partly supported by the DFG via the Sino-German project ``Analysis of PDEs and application".
}\vspace{0.5cm}\\
\small 1.
Department of Mathematics and
IMS, Nanjing University, Nanjing 210093, China.\\
\small 2.
Mathematical Institute, University of G\"{o}ttingen,
Bunsenstr.~3-5, D-37073 G\"{o}ttingen, Germany.\\
\small 3.
School of Mathematical Sciences, Nanjing Normal University, Nanjing 210023, China.\\}
\vspace{0.5cm}

\date{}
\maketitle
\centerline{}
\vskip 0.3 true cm

\centerline {\bf Abstract} \vskip 0.3 true cm

This paper is a continuation of our recent work in \cite{HWYin}. In \cite{HWYin},
for the semilinear generalized
Tricomi equation $\partial_t^2 u-t^m \Delta u=|u|^p$ with initial data $\big(u(0,x), \p_t u(0,x)\big)=\big(u_0(x), u_1(x)\big)$,
$t\ge 0$, $x\in\Bbb R^n$ $(n\ge 3)$, $p>1$ and  $m\in\Bbb N$, we have shown that
there exists a critical exponent $p_{crit}(m, n)>1$ such that the solution $u$ generally blows up in
finite time when $1 < p < p_{crit}(m, n)$; and meanwhile there exists a conformal exponent
$p_{conf}(m, n)$ $\big(> p_{crit}(m, n)\big)$ such that the solution $u$ exists globally when $p\ge p_{conf}(m, n)$ provided
that $(u_0(x), u_1(x))$ are small.
In the present paper, we shall prove that the small data solution $u$ of $\partial_t^2 u-t^m \Delta u=|u|^p$
exists globally when $p_{crit}(m,n)<p<p_{conf}(m,n)$. Therefore, collecting the results in this paper and \cite{HWYin},
we have given a basically systematic study on the blowup or global existence of small data solution $u$ to
the equation $\partial_t^2 u-t^m \Delta u=|u|^p$  for $n\ge 3$. 

\vskip 0.2 true cm

{\bf Keywords:} Generalized Tricomi equation, Fourier integral operator,
global existence, weighted Strichartz estimate \vskip 0.2 true cm

{\bf Mathematical Subject Classification 2000:} 35L70, 35L65,
35L67

\section{Introduction}
In this paper, we continue to study the global Cauchy problem for the following
semilinear generalized Tricomi equation:
\begin{equation}
\left\{ \enspace
\begin{aligned}
&\partial_t^2 u-t^m \Delta u =|u|^p,  \\
&u(0,x)=\ve u_0(x), \quad \partial_{t} u(0,x)=\ve u_1(x), \\
\end{aligned}
\right.
\label{equ:original}
\end{equation}
where $t\ge 0$,
$x=(x_1, ..., x_n)\in\Bbb R^n$ $(n\ge 3)$,  $m\in\Bbb N$, $p>1$, $\ve>0$ is sufficiently small,
and $u_i(x)\in C_0^{\infty}\big(B(0,M-1)\big)$
$(i=0,1)$ with $B(0, M-1)=\{x: |x|=\sqrt{x_1^2+...+x_n^2}<M-1\}$ and $M>1$.
For the local existence and regularity of solution $u$
to problem \eqref{equ:original} under weaker regularity assumptions on $(u_0, u_1)$, the reader may
consult \cite{Rua1}-\cite{Rua4}, \cite{Yag2} and the
references therein; here we shall not discuss this problem.
Since the local existence of  solution $u$
to \eqref{equ:original} with minimal regularities has been established in \cite{Rua4}, without loss of generality,
as in \cite{HWYin} we only focus on the global solution problem of \eqref{equ:original} starting from some
positive time $T_0>0$. Therefore, it is plausible that one utilizes the nonlinear function
$F_p(t,u)=\big(1-\chi(t)\big)F_p(u)+\chi(t)|u|^p$ instead of $|u|^p$ in \eqref{equ:original}, where
$F_p(u)$ is a $C^{\infty}-$smooth function with $F_p(0)=0$ and
$|F_p(u)|\leq C(1+|u|)^{p-1}|u|$, and $\chi(s)\in C^{\infty}(\Bbb R)$
with \(\chi(s)=
\left\{ \enspace
\begin{aligned}
1, \quad &s\geq T_0, \\
0, \quad &s\leq T_0/2.
\end{aligned}
\right.\)
\quad  Correspondingly, we shall study the following problem instead
of \eqref{equ:original}
\begin{equation}
\left\{ \enspace
\begin{aligned}
&\partial_t^2 u-t^m \Delta u =F_p(t,u),  \\
&u(0,x)=\ve u_0(x), \quad \partial_{t} u(0,x)=\ve u_1(x).\\
\end{aligned}
\right.
\label{equ:1.2}
\end{equation}

In \cite{HWYin},  we have determined a critical exponent $p_{crit}(m,n)$ and a conformal
exponent $p_{conf}(m,n)$ $(>p_{crit}(m,n)\big)$ for \eqref{equ:original} or \eqref{equ:1.2} as follows:
$p_{crit}(m,n)$ is the positive root of the algebraic equation
\[\Big((m+2)\frac{n}{2}-1\Big)p^2+\Big((m+2)(1-\frac{n}{2})-3\Big)p-(m+2)=0,\]
and $p_{conf}(m,n)=\frac{(m+2)n+6}{(m+2)n-2}$.
Subsequently it is shown that
the solution $u$ of \eqref{equ:original} or \eqref{equ:1.2} generally blows up in
finite time when $1 < p < p_{crit}(m, n)$, and
meanwhile $u$ exists globally when $p\ge p_{conf}(m, n)$ for small $\ve>0$.
In the present paper, we shall prove that the small data solution $u$ of (1.2)
exists globally when $p_{crit}(m,n)<p< p_{conf}(m,n)$. Therefore, collecting these results,
we have given a basically systematic study on the blowup or global existence of small data solution $u$ to
problem \eqref{equ:1.2} for $n\ge 3$. The main result
in the paper is:

\begin{theorem}\label{thm:global existence}
For $p\in \big(p_{crit}(m,n), p_{conf}(m,n)\big)$ and small
$\ve>0$, problem \eqref{equ:1.2} has a  global solution $u$ such that
\begin{equation}\label{equ:1.3}
\left(1+\big|\phi(t)^2-|x|^2\big|\right)^{\gamma}u\in L^{p+1}(\mathbb{R}^{1+n}_+),
\end{equation}
where  $\phi(t)=\f{2}{m+2}t^{\f{m+2}{2}}$, and the positive constant $\gamma$ fulfills
\begin{equation}\label{equ:1.4}
\f{1}{p(p+1)}<\gamma<\f{\big((m+2)n-2\big)p-\big((m+2)n+2\big)}{2(m+2)(p+1)}+\f{m}{(m+2)(p+1)}.
\end{equation}
\end{theorem}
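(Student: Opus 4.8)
The plan is to establish global existence in the supercritical-subconformal range by a contraction mapping argument carried out in the weighted space $X_\gamma$ whose norm is the left-hand side of \eqref{equ:1.3}, i.e.\ $\|u\|_{X_\gamma}=\|(1+|\phi(t)^2-|x|^2|)^\gamma u\|_{L^{p+1}(\mathbb{R}^{1+n}_+)}$. The first step is to recast \eqref{equ:1.2} as an integral equation $u = u^{\mathrm{lin}} + L F_p(\cdot,u)$, where $u^{\mathrm{lin}}$ solves the linear generalized Tricomi equation with the small Cauchy data $\varepsilon(u_0,u_1)$ and $L$ denotes the forward fundamental solution (Duhamel operator) for $\partial_t^2-t^m\Delta$. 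Because $\phi(t)=\frac{2}{m+2}t^{\frac{m+2}{2}}$ is precisely the time variable that turns the Tricomi operator into a wave-type operator with light cone $\{\phi(t)=|x|\}$, the natural functional framework is one of weighted Strichartz (Strauss–Georgiev–Lindblad–Sogge type) estimates adapted to the weight $(1+|\phi(t)^2-|x|^2|)$; the parameter $m$ enters only through the shift $\frac{m}{(m+2)(p+1)}$ visible in \eqref{equ:1.4}.

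The second step, and the analytic heart of the argument, is to prove the two needed linear estimates: (i) the homogeneous bound $\|u^{\mathrm{lin}}\|_{X_\gamma}\lesssim \varepsilon$ for compactly supported smooth data, and (ii) the inhomogeneous bound $\|L g\|_{X_\gamma}\lesssim \|(1+|\phi(t)^2-|x|^2|)^{-\gamma'} g\|_{L^{(p+1)'}}$ for a dual exponent $\gamma'$ with $\gamma+\gamma' $ matching the scaling budget. These are obtained by writing $L$ as a Fourier integral operator — the representation of the fundamental solution of the generalized Tricomi operator in terms of confluent hypergeometric functions (the same FIO machinery used for $p\ge p_{\mathrm{conf}}$ in \cite{HWYin}) — and then running a $TT^\ast$/interpolation argument between an $L^2$-based energy estimate and a dispersive $L^\infty$ decay estimate, keeping careful track of the weights on the characteristic conoid $\phi(t)^2=|x|^2$. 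The admissible range of $\gamma$ in \eqref{equ:1.4} is exactly the set of weights for which both (i) and (ii) hold simultaneously: the lower bound $\gamma>\frac{1}{p(p+1)}$ is what makes the nonlinear term close under iteration (it guarantees $\gamma'<\gamma p$ so that the weight produced by Hölder on $|u|^p$ is dominated by the weight the operator $L$ can absorb), while the upper bound is the integrability/decay threshold coming from the dispersive estimate, and the condition $p<p_{\mathrm{conf}}(m,n)$ is precisely what keeps this interval non-empty above $\frac{1}{p(p+1)}$ while $p>p_{\mathrm{crit}}(m,n)$ ensures the lower bound is itself achievable.

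The third step is the nonlinear closure: using $|F_p(t,u)|\le C(1+|u|)^{p-1}|u|\lesssim |u|^p$ for the relevant range and Hölder's inequality with exponents $p+1$ and $(p+1)/p$, one checks that $u\mapsto u^{\mathrm{lin}}+LF_p(\cdot,u)$ maps the ball $\{\|u\|_{X_\gamma}\le 2C\varepsilon\}$ into itself and is a contraction there once $\varepsilon$ is small, the key inequality being $\|(1+|\phi(t)^2-|x|^2|)^{-\gamma'}|u|^p\|_{L^{(p+1)'}}\lesssim \|u\|_{X_\gamma}^p$, which holds exactly when $\gamma' \le \gamma p$, i.e.\ by the choice of $\gamma$ in \eqref{equ:1.4}; the lower-order term $(1-\chi(t))F_p(u)$ is handled on the compact time slab $t\le T_0$ by the local theory of \cite{Rua4}. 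The difference estimate is analogous since $|a|^{p-1}a-|b|^{p-1}b=O(|a-b|(|a|+|b|)^{p-1})$. Finally, the fixed point $u$ satisfies \eqref{equ:1.3} by construction, and standard regularity propagation upgrades it to a genuine solution of \eqref{equ:1.2}. The main obstacle I anticipate is step two: proving the weighted Strichartz estimate for the FIO $L$ with the sharp weight exponents, since the generalized Tricomi fundamental solution has a degenerate (non-constant-speed) characteristic structure near $t=0$ and one must control both the behavior across the conoid $\phi(t)=|x|$ and the low-frequency/large-time behavior, interpolating the Bessel-type oscillatory kernel asymptotics against the weight $(1+|\phi(t)^2-|x|^2|)^{\pm\gamma}$ uniformly.
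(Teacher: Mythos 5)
Your outline (FIO propagator $+$ weighted Strichartz $+$ Picard iteration in the ball $\{\|u\|_{X_\gamma}\le 2C\varepsilon\}$) is the right one, and steps one and three are faithful to Sections~2 and 6 of the paper. The gap is in step two, which you treat as a single $TT^\ast$/interpolation pass between an $L^2$ energy bound and an $L^\infty$ dispersive bound. That strategy does not produce the weighted inhomogeneous estimate you need, for the same structural reason the Georgiev--Lindblad--Sogge argument cannot be carried over verbatim: for $\partial_t^2-\Delta$ they first apply a conformal transformation that compresses the light cone to a half-cone and makes the weight $1+|t^2-|x|^2|$ comparable to $1+t^2$, and only then does a one-parameter interpolation give the Strichartz bound. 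No such conformal transformation preserves $\partial_t^2-t^m\Delta$ while compressing the cusp cone $\{|x|<\phi(t)\}$, so the characteristic weight cannot be reduced to a pure time weight. What the paper actually proves, for $q>\frac{2((m+2)n-m)}{(m+2)n-2}$, is
\[
\big\|\big((\phi(t)+M)^2-|x|^2\big)^{\gamma_1}w\big\|_{L^q}\le C\big\|\big((\phi(t)+M)^2-|x|^2\big)^{\gamma_2}F\big\|_{L^{q/(q-1)}},\qquad
\gamma_2>\tfrac1q,\ \ \gamma_1<\tfrac{(m+2)n-2}{2(m+2)}-\tfrac{(m+2)n-m}{(m+2)q},
\]
and this requires a two-layer interpolation: first between the two endpoints $q=q_0=\frac{2((m+2)n+2)}{(m+2)n-2}$ and $q=2$ (Theorem~3.1, whose endpoint inequalities (3.3)--(3.4) occupy all of Sections~4--5), and then, for each endpoint, a Stein complex interpolation between $L^1\to L^\infty$ and $L^2\to L^2$ bounds for analytic families $T_z$, $\tilde T_z$ of FIOs, after splitting $(t,x)$ into ``relatively large time'' and ``relatively small time'' regimes and the distance $\delta$ to the cone into several dyadic cases relative to a base scale $\delta_0$. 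One must also shift the weight from $|\phi(t)^2-|x|^2|$ to $(\phi(t)+M)^2-|x|^2$ so it stays bounded away from zero on the support of the data and of the solution, and reduce to source terms supported strictly inside the cone. None of this casework is optional at the exponents $\gamma_1,\gamma_2$ the theorem demands.

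Two smaller corrections. In your inhomogeneous bound the source weight should carry a positive exponent $\gamma_2>1/q$, not $-\gamma'$; in the iteration one takes $\gamma_2=p\gamma$, $q=p+1$, and the constraint $\gamma_2>1/q$ is exactly the lower bound $\gamma>\frac{1}{p(p+1)}$ in (1.4), while the upper bound is $\gamma_1<\frac{(m+2)n-2}{2(m+2)}-\frac{(m+2)n-m}{(m+2)(p+1)}$ rewritten. Finally, nonemptiness of the interval (1.4) is guaranteed by $p>p_{crit}(m,n)$ alone (Remark~1.1); $p<p_{conf}(m,n)$ merely delimits the range not already covered by \cite{HWYin}, rather than being what keeps the interval nonempty.
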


\begin{remark}
Note that for $p>p_{crit}(m,n)$,  one easily has
\[\f{1}{p(p+1)}<\f{\big((m+2)n-2\big)p-\big((m+2)n+2\big)}{2(m+2)(p+1)}+\f{m}{(m+2)(p+1)}.\]
This implies that condition \eqref{equ:1.4} makes sense.
\end{remark}

\begin{remark}
If we take $m=0$ in \eqref{equ:original}, then problem \eqref{equ:original} becomes
\begin{equation}
\left\{ \enspace
\begin{aligned}
&\partial_t^2 u-\Delta u =|u|^p,  \\
&u(0,x)=\ve u_0(x), \quad \partial_{t} u(0,x)=\ve u_1(x).\\
\end{aligned}
\right.
\label{equ:1.5}
\end{equation}
Let
$p_1(n)$ denote the positive root of the quadratic equation
\begin{equation}\label{1.6}
\left(n-1\right)p^2-\left(n+1\right)p-2=0.
\end{equation}
W.Strauss \cite{Strauss} made the following {\bf conjecture:}
If $p>p_1(n)$, then the small data solution $u$ of \eqref{equ:1.5}
exists globally. If $1<p<p_1(n)$, then the solution $u$ of
\eqref{equ:1.5} generally blows up in finite time.

So far, Strauss' conjecture has basically been solved. For examples,
when $1<p\leq p_1(n)$ and the initial data $\left(u_0,
u_1\right)$ are non-negative, blowup for the solution $u$ of \eqref{equ:1.5} has been
shown, while, for $p > p_1(n)$ and some other restrictions on the power $p$, global existence of small data solution
$u$ of \eqref{equ:1.5} has also been established (see
\cite{Gla1,Gla2,Gls, Joh,Gl2,Sid,Strauss,Yor,Zhou} and the references
therein).
\end{remark}

\begin{remark}
For the cases of $n=1,2$ or $p=p_{crit}(m,n)$ with $n\ge 3$ in problem \eqref{equ:original},
we shall study the problem of blowup or global existence of small data solution $u$ in
our forthcoming paper \cite{HWYin0}.
\end{remark}

\begin{remark}
If $m=0$ is chosen in problem \eqref{equ:1.5}, then the estimates in Theorem~\ref{thm:global existence}
are coincident with the ones in Theorem 1.1 of \cite{Gls}.
\end{remark}

\begin{remark}
As pointed out in \cite{Rei1}, \cite{HWYin} and so on, for large $t>0$,
\eqref{equ:original} is actually equivalent to the following problem
\begin{equation}\label{equ:W}
\left\{ \enspace
\begin{aligned}
&\p_t^2 u-\Delta u+\f{\mu_m}{1+t}\p_t u=(1+t)^{-\al_m}|u|^p,\\
&u(0,x)=\ve u_0(x), \quad \partial_{t} u(0,x)=\ve u_1(x), \\
\end{aligned}
\right.
\end{equation}
where $\mu_m=m/(m+2)$ and $\al_m=2m/(m+2)$. With respect to the problem
\begin{equation}\label{equ:eff}
\left\{ \enspace
\begin{aligned}
&\partial_t^2 u-\Delta u +\f{\mu}{1+t}\,\p_tu=|u|^p, \\
&u(0,x)=u_0(x), \quad \partial_{t} u(0,x)=u_1(x),
\end{aligned}
\right.
\end{equation}
where $\mu>0$, $p>1$, $n\ge 1$, and $u_i\in
C_0^{\infty}(\R^n)$ ($i=0, 1$), so far it has still been an interesting
open problem how to determine explicitly a critical value $p_{c}(\mu,n)$  for $\mu\approx1$
so that problem \eqref{equ:eff} has a global small data solution $u$ for
$p>p_c(\mu, n)$, while solution $u$ of \eqref{equ:eff} generally blows up in
finite time when $1<p\le p_c(\mu,n)$ (see \cite{Rei2, Rei1, Zhai, Nish, Wir-1, Wir-2}).
Motivated by the techniques in \cite{HWYin} and this paper, we shall systematically study
problem \eqref{equ:eff} for $1/3\le \mu<1$ and $p>1$ in the future.
\end{remark}

\begin{remark}
For the semilinear generalized Tricomi equation $\p_t^2u-t^m\Delta u=f(t,x,u)$
with $m\in\Bbb N$ and $x=(x_1, \cdot\cdot\cdot, x_n)$, and for some certain assumptions on the function
$f(t,x,u)$, the authors in \cite{Gva} and \cite{Lup1}-\cite{Lup4}
have obtained some remarkable results on the existence and uniqueness of solution $u$
in {\bf bounded domains} under Tricomi, Goursat or Dirichlet boundary conditions respectively
in the mixed type case, in the degenerate hyperbolic setting or in the degenerate
elliptic setting.
\end{remark}

\begin{remark}
Consider a related semilinear problem to \eqref{equ:original} as follows
\begin{equation}
\left\{ \enspace
\begin{aligned}
&\partial_t^2 u-t|t|^{\mu-1} \Delta u +u|u|^{p-1}=0,  \\
&u(0,x)=\ve u_0(x), \quad \partial_{t} u(0,x)=\ve u_1(x), \\
\end{aligned}
\right.
\label{equ:1.9}
\end{equation}
where $t\ge 0$,
$x=(x_1, ..., x_n)\in\Bbb R^n$ $(n\ge 3)$,  $\mu>0$, $p>1$, and $u_i(x)\in C_0^{\infty}(\Bbb R^n)$.
The authors in \cite{Lup3} have dealt with the energy estimates of classical solution $u$ when the exponent p
is critical or supercritical (i.e., $p\ge2n^*/(n^*-2)$, where $n^*=n(\mu+2)/2$ is the homogeneous dimension
of the operator $\partial_t^2-t|t|^{\mu-1} \Delta$).
However, by our knowledge, so far there are no systematic results on the global existence or uniqueness
of solution $u$ to problem ~\eqref{equ:1.9}.
\end{remark}

Let's recall some known results on  problem \eqref{equ:original} or \eqref{equ:1.2}. Under the restricted conditions
\begin{equation}\label{equ:ugly}
\left\{ \enspace
\begin{aligned}
&\f{(n+1)(p-1)}{p+1}\le\f{m}{m+2},\\
&\left(\f{2}{p-1}-\f{n(m+2)}{2(p+1)}\right)p\le 1,\\
&\f{2(p+1)}{p(p-1)n(m+2)}\le\f{1}{p+1}\le\f{m+4}{(n+1)(p-1)(m+2)}
\end{aligned}
\right.
\end{equation}
(corresponding to (1.8) and (1.12) of \cite{Yag2} with $k=m/2$, $\al=p-1$ and
$\beta=\f{2}{p-1}-\f{n(k+1)}{p+1}$) it was shown in \cite[Theorem~1.2]{Yag2}
that problem (1.1) has a global small data solution
$u\in C([0, \infty), L^{p+1}(\R^n)) \cap C^1([0, \infty), {\mathcal D}'(\R^n))$.
On the other hand, under the conditions $\int_{\R^n} u_1(x)\md x \\
>0$ and
\begin{equation}\label{equ:blow}
1<p<\f{(m+2)n+2}{(m+2)n-2},
\end{equation}
it was shown in \cite[Theorem~1.3]{Yag2} that (1.1)
has no global solution $u\in C([0, \infty),
L^{p+1}(\R^n))$. Here we
point out that \eqref{equ:blow} comes from condition (1.15) of
\cite{Yag2}. As emphasized in \cite{HWYin}, with respect to
the blowup or global existence of solution $u$ to problem (1.1),
there are some gaps for the scope of power $p$ in \cite{Yag2}. In this and the previous paper \cite{HWYin},
motivated by the Strauss conjecture and \cite{Gls}, \cite{Gl2}, we shall systematically
study  problem (1.1) or (1.2) for $n\ge 3$.

There are extensive results concerning the Cauchy problem
for both linear and semilinear generalized Tricomi equations. For
instance, for the linear generalized Tricomi equation,
the authors in \cite{Gelf}, \cite{Yag1} and \cite{Yag3} have computed its
fundamental solution explicitly. More recently, the authors in
\cite{Rua1, Rua2, Rua3, Rua4} established the local existence as well
as the singularity structure of low regularity solutions to the
semilinear equation $\partial_t^2u -t^m\triangle u=f(t,x,u)$ in the
degenerate hyperbolic region and the elliptic-hyperbolic mixed region,
respectively, where $f$ is a $C^1$ function and has compact support
with respect to the variable $x$. By establishing some classes of $L^p$-$L^q$  estimates
for the solution $v$ of linear equation $\partial_t^2v -t^m\triangle v=F(t,x)$,
the author in \cite{Yag2} obtained a
series of interesting results about the global existence or the
blowup of solutions to problem (1.1) when the exponent
$p$ belongs to a certain range, however, there was a
gap between the global existence interval and the blowup interval;
moreover, the critical exponent $p_{crit}(m,n)$ was not
determined there.

We now comment on the proof of Theorem 1.1.
To prove Theorem 1.1, motivated by \cite{Gls, Gl2}, where some basic
weighted Strichartz estimates with the weight $1+|t^2-|x|^2|$ were obtained for the linear wave operator
$\p_t^2-\triangle$, we
require to establish some Strichartz estimates with the characteristic weight
$(\phi(t)+M)^2-|x|^2$  (here and below $\phi(t)\equiv 2/(m+2)t^{\f{m+2}{2}}$) for the generalized
Tricomi operator $\p_t^2-t^m\Delta$. Here we point out that in order to derive the
weighted Strichartz estimates for $\p_t^2-\triangle$, the authors in \cite{Gls} utilize
the crucial conformal transformation method to change the characteristic cone $\{(t,x): t>0, |x|<t\}$
into the half cone $\{(t,x): t>0, |x|<\f{t}{2}\}$ meanwhile the wave operator
$\p_t^2-\triangle$ keeps invariant (in this case, the related weight $1+|t^2-|x|^2|$
is equivalent to $1+t^2$ for large $t>0$). However, it seems that this conformal transformation method
used in  \cite{Gls} is not available for us since it is difficult to find a suitable  conformal transformation
to let the characteristic cusp cone $\{(t,x): t>0, |x|<\phi(t)\}$
become the half cusp cone $\{(t,x): t>0, |x|<\f{\phi(t)}{2}\}$ and simultaneously keep the generalized Tricomi
operator $\p_t^2-t^m\Delta$ invariant. Thanks to  some involved analysis and careful observations,
we are able to overcome the technical difficulties related to
degeneracy  and eventually establish the expected weighted
Strichartz estimates for problem \eqref{equ:1.2}. In addition, some elementary but important estimates
used in $\S 4$ are put in the appendix.

In Section 2, the following linear problem is studied
\begin{equation*}
\begin{cases}
&\partial_t^2 v-t^m\triangle v=0, \\
&v(0,x)=f(x),\quad \partial_tv(0,x)=g(x), \\
\end{cases}
\label{equ:0.1}
\end{equation*}
where $f, g\in C_0^\infty(\mathbb{R}^n)$,  $\operatorname{supp}\ (f,g)\subseteq \{x: |x|\leq M-1\}$ for some fixed constant
$M>1$. By introducing certain Fourier
integral operators associated to $\partial_t^2 v-t^m\triangle$ and taking delicate analysis,
we establish Strichartz-type estimates of the form
\begin{equation}\label{equ:0.2}
\left\|\big((\phi(t)+M)^2-|x|^2\big)^\gamma v\right\|_{L^q(\mathbb{R}^{1+n}_+)}\leq C(\parallel f\parallel_{W^{\f{n}{2}+\f{1}{m+2}+\delta,1}(\mathbb{R}^n)}+\parallel g\parallel_{W^{\f{n}{2}-\f{1}{m+2}+\delta,1}(\mathbb{R}^n)}),
\end{equation}
where $q>\f{2((m+2)n-m)}{(m+2)n-2}$, $0<\gamma<\f{(m+2)n-2}{2(m+2)}-\f{(m+2)n-m}{(m+2)q}$,
and $0<\delta<\f{n}{2}+\f{1}{m+2}-\gamma-\f{1}{q}$.

In Section 3- Section 5, we focus on the study of the problem
\begin{equation*}
\begin{cases}
&\partial_t^2 w-t^m\triangle w=F(t,x), \\
&w(0,x)=0,\quad \partial_tw(0,x)=0,
\end{cases}
\label{equ:0.3}
\end{equation*}
where $F(t,x)\equiv0$ when
$|x|>\phi(t)+M-1$ and $F(t,x)\in C^{\infty}([0, T_0]\times\Bbb R^n)$ for some fixed
number $T_0>0$ $(T_0<1)$. We eventually establish
\begin{equation}
\begin{split}
\Big\|\Big(\big(\phi(t)+M\big)^2-|x|^2\Big)^{\gamma_1}w\Big\|_{L^q([\f{T_0}{2}, \infty)\times \mathbb{R}^{n})}\leq C
\Big\|\Big(\big(\phi(t)+M\big)^2-|x|^2\Big)^{\gamma_2}F\Big\|_{L^{\frac{q}{q-1}}([\f{T_0}{2}, \infty)\times \mathbb{R}^{n})},
\end{split}
\label{equ:0.4}
\end{equation}
where the constants $\gamma_1$ and $\gamma_2$ satisfy $0<\gamma_1<\frac{(m+2)n-2}{2(m+2)}-\frac{(m+2)n-m}{(m+2)q}$ and
$\gamma_2>\frac{1}{q}$, and $q>\f{2((m+2)n-m)}{(m+2)n-2}$.
To derive \eqref{equ:0.4}, we shall split the integral domain $\{(t,x): \phi(t)^2-|x|^2\leq1\}$
in the corresponding Fourier integral operators into some pieces,
which correspond to the ``relatively large time" part and the ``relatively small time"
part respectively. By introducing and analyzing such kinds of Fourier integral operators for $z\in\Bbb C$,
\newpage
\begin{align*}
(T_zg)(t,x)=&(z-\frac{(m+2)n+2}{2(m+2)})e^{z^2}\int_{\mathbb{R}^n}\int_{\mathbb{R}^n}e^{i[(x-y)\cdot\xi-(\phi(t)-|y|)|\xi|]} \\
&\qquad\qquad\times\big(1+\phi(t)|\xi|\big)^{-\frac{m}{2(m+2)}}g(y)\frac{\md\xi}{|\xi|^z}\md y,\\
(\tilde{T}_zg)(t,x)=&\int_{\Bbb R^n}\int_{\Bbb R^n}e^{i[(x-y)\cdot\xi-(\phi(t)-|y|)|\xi|]}
\big(1+\phi(t)|\xi|\big)^{-\frac{m}{2(m+2)}}\\
&\qquad\qquad  \times \big(1+\phi(s)|\xi|\big)^{-\frac{m}{2(m+2)}}g(y)\frac{\md\xi}{|\xi|^{z}}\md y,\\
\label{equ:4.58}
\end{align*}
and combining with the complex interpolation methods, we
ultimately obtain \eqref{equ:0.4}. Based on \eqref{equ:0.2}
and \eqref{equ:0.4}, by the standard contraction mapping principle, we
complete the proof of Theorem 1.1 in Section 6.


\section{The weighted Strichartz inequality for homogeneous generalized Tricomi equation}

In this section, our main purpose is to establish a weighted Strichartz inequality for the homogeneous
generalized Tricomi equation:
\begin{equation}
\begin{cases}
&\partial_t^2 v-t^m\triangle v=0, \\
&v(0,x)=f(x),\quad \partial_tv(0,x)=g(x), \\
\end{cases}
\label{equ:2.1}
\end{equation}
where $f, g\in C_0^\infty(\mathbb{R}^n)$,  $\operatorname{supp}(f,g)\subseteq \{x: |x|\leq M-1\}$ for some fixed constant
$M>1$. We now have the following weighted space-time estimate of Strichartz-type.

\begin{theorem}\label{thm:homogeneous estimate}
For the solution $v$ of (2.1), one then has
\begin{equation}\label{equ:2.2}
\left\|\Big(\big(\phi(t)+M\big)^2-|x|^2\Big)^\gamma v\right\|_{L^q(\mathbb{R}^{1+n}_+)}\leq C(\parallel f\parallel_{W^{\f{n}{2}+\f{1}{m+2}+\delta,1}(\mathbb{R}^n)}+\parallel g\parallel_{W^{\f{n}{2}-\f{1}{m+2}+\delta,1}(\mathbb{R}^n)}),
\end{equation}
where $\phi(t)=\frac{2}{m+2}t^{\frac{m+2}{2}}$,
$q>\f{2((m+2)n-m)}{(m+2)n-2}$, $0<\gamma<\f{(m+2)n-2}{2(m+2)}-\f{(m+2)n-m}{(m+2)q}$,
$0<\delta<\f{n}{2}+\f{1}{m+2}-\gamma-\f{1}{q}$,
and $C$ is a positive constant depending only on $m$, $n$, $q$, $\gamma$ and $\delta$.
\end{theorem}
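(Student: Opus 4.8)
The plan is to represent the solution of \eqref{equ:2.1} by explicit Fourier integral operators constructed from the Bessel-function solutions of the associated ODE, and then to prove the weighted bound \eqref{equ:2.2} for each of these operators by a dyadic frequency decomposition together with stationary phase, following the strategy of Georgiev--Lindblad--Sogge \cite{Gls} for $\partial_t^2-\Delta$; since, as explained in the Introduction, the conformal inversion of \cite{Gls} has no analogue for the cuspidal characteristic surface $|x|=\phi(t)$, the oscillatory analysis has to be carried out directly. After the Fourier transform in $x$, \eqref{equ:2.1} becomes $\partial_t^2\hat v+t^m|\xi|^2\hat v=0$, whose linearly independent solutions are $t^{1/2}J_{\pm\frac{1}{m+2}}\big(\phi(t)|\xi|\big)$. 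Matching the Cauchy data and inserting the behaviour of the Bessel functions near the origin (power series) and at infinity (asymptotic expansion), one obtains, modulo smoothing terms,
\[
v(t,x)=\sum_{\pm}\int_{\mathbb{R}^n}e^{i\left(x\cdot\xi\pm\phi(t)|\xi|\right)}\Big(a_{\pm}(t,|\xi|)\,\hat f(\xi)+b_{\pm}(t,|\xi|)\,\hat g(\xi)\Big)\,d\xi,
\]
where $a_{\pm}$ and $b_{\pm}$ are symbols carrying the characteristic decay factor $\big(1+\phi(t)|\xi|\big)^{-\frac{m}{2(m+2)}}$, with $b_{\pm}$ of order $\tfrac{2}{m+2}$ lower in $\xi$ than $a_{\pm}$; this order gap is precisely the gap between the two Sobolev exponents on the right-hand side of \eqref{equ:2.2}.

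Second, since $\operatorname{supp}(f,g)\subseteq\{|x|\le M-1\}$ and the characteristic speed of $\partial_t^2-t^m\Delta$ is $t^{m/2}$ with $\int_0^ts^{m/2}\,ds=\phi(t)$, the solution obeys $\operatorname{supp} v(t,\cdot)\subseteq\{|x|\le\phi(t)+M-1\}$, so that on $\operatorname{supp} v$ one has $1\lesssim\big(\phi(t)+M\big)^2-|x|^2\lesssim\big(\phi(t)+M\big)^2$; in particular the weight is harmless for $\phi(t)\lesssim1$, where in addition $v$ is smooth and compactly supported. One decomposes $f$ and $g$ into Littlewood--Paley pieces with $|\xi|\sim\lambda$ (dyadic $\lambda\ge1$, plus a trivially bounded low-frequency term), so that it suffices to establish \eqref{equ:2.2} for each frequency-$\lambda$ contribution with a constant that is geometrically summable in $\lambda$ after balancing the loss against the powers of $\lambda$ produced by the Sobolev norms and by the uniform $L^1$-boundedness of the Littlewood--Paley projections. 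For the frequency-$\lambda$ operator one splits $\mathbb{R}^{1+n}_+$ into the region near the cusp, where $\nabla_\xi\big(x\cdot\xi\pm\phi(t)|\xi|\big)=x\pm\phi(t)\xi/|\xi|$ can vanish, and its complement, where repeated integration by parts in $\xi$ gives decay of arbitrary order in $\lambda$ and a rapidly decaying tail that sums at once.

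The essential point is the estimate in the region near the cusp. The phase is stationary only on $\{|x|=\phi(t)\}$ with $\xi$ parallel to $x$, and there the Hessian in the $n-1$ angular directions has size $\sim\phi(t)|\xi|$; a stationary-phase analysis, using the amplitude bounds $\big|\partial_{|\xi|}^k a_{\pm}\big|\lesssim\big(1+\phi(t)|\xi|\big)^{-\frac{m}{2(m+2)}}|\xi|^{-k}$ (and the analogous ones for $b_{\pm}$), then yields a dispersive bound for the frequency-$\lambda$ piece of the type $\lesssim\lambda^{\,c}\,\phi(t)^{-\frac{n-1}{2}}\big(1+\phi(t)\lambda\big)^{-\frac{m}{2(m+2)}}\|f_\lambda\|_{L^1}$ on the relevant shell, while for $\phi(t)\lambda\lesssim1$ one uses instead the power series of the Bessel functions near $0$. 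Multiplying by the weight $\big((\phi(t)+M)^2-|x|^2\big)^{\gamma}$ and by a suitable power of the volume of that shell, and then integrating the $q$-th power in $t$, one gets convergence at $t=\infty$ exactly when $q>\tfrac{2((m+2)n-m)}{(m+2)n-2}$ and $0<\gamma<\tfrac{(m+2)n-2}{2(m+2)}-\tfrac{(m+2)n-m}{(m+2)q}$, while convergence at $t=0$ is automatic; the admissible range of $\delta$ is then exactly the amount of regularity slack needed to sum the dyadic series and to keep $\gamma$ below the edge.

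\emph{The main obstacle} is the degeneracy at $t=0$: the group speed $t^{m/2}$ vanishes, $\phi(t)$ vanishes to order $\tfrac{m+2}{2}>1$ so the characteristic surface is a genuine cusp, and the amplitude $\big(1+\phi(t)|\xi|\big)^{-\frac{m}{2(m+2)}}$ is not a classical symbol in $t$ near $t=0$. Proving the stationary-phase/dispersive estimate above with constants uniform across the transition $\phi(t)|\xi|\sim1$ — which requires a further dyadic decomposition in the angle $\angle(x,\xi)$ and careful control of the non-stationary contributions, as in the most delicate part of \cite{Gls} but now with $\phi(t)$ in place of $t$ and with the nonstandard amplitude — is the crux of the argument. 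Once this uniform oscillatory bound is in hand, the rest (carrying out the dyadic summation and verifying that the $t$-integral converges precisely in the stated range of $q$, $\gamma$, $\delta$) is bookkeeping.
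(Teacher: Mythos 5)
Your proposal is correct and follows essentially the same route as the paper: it uses a Fourier integral representation of $v$ with amplitudes carrying the decay $(1+\phi(t)|\xi|)^{-\frac{m}{2(m+2)}}$ (the paper takes this from Yagdjian's confluent-hypergeometric formulas, equivalent to your Bessel representation), a Littlewood--Paley decomposition, a stationary-phase kernel bound of the form $\lambda_j^{(n+1)/2}(1+\phi(t)\lambda_j)^{-\frac{m}{2(m+2)}}(\phi(t)+\lambda_j^{-1})^{-\frac{n-1}{2}}(1+\lambda_j\,\big||x-y|-\phi(t)\big|)^{-N}$ with the exponent $N=\frac{n}{2}+\frac{1}{m+2}\pm\delta$ tuned by sign of $j$ to sum the dyadic series, and finally the polar-coordinate integration that produces exactly the stated ranges of $q$, $\gamma$, $\delta$. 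The only minor discrepancy of emphasis is that the paper dismisses the $t\lesssim 1$ region outright (the solution is smooth and compactly supported there) and places all of the work in the large-$\phi(t)$ pointwise estimate, whereas you single out the $t\to 0$ degeneracy as the crux; but this does not affect the substance or validity of the argument.
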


\begin{proof}
It follows from \cite{Yag2} that the solution $v$ of \eqref{equ:2.1} can be expressed as
\[v(t,x)=V_1(t, D_x)f(x)+V_2(t, D_x)g(x),\]
where the symbols $V_j(t, \xi)$ ($j=1,2$) of the Fourier integral operators $V_j(t, D_x)$  are
\begin{equation}\label{equ:2.3}
\begin{split}
V_1(t,|\xi|)=&\frac{\Gamma(\frac{m}{m+2})}{\Gamma(\frac{m}{2(m+2)})}\biggl[e^{\frac{z}{2}}H_+\Big(\frac{m}{2(m+2)},\frac{m}{m+2};z\Big) +e^{-\frac{z}{2}}H_-\Big(\frac{m}{2(m+2)},\frac{m}{m+2};z\Big)\biggr]
\end{split}
\end{equation}
and
\begin{equation}
\begin{split}
V_2(t,|\xi|)=&\frac{\Gamma(\frac{m+4}{m+2})}{\Gamma(\frac{m+4}{2(m+2)})}t\biggl[
e^{\frac{z}{2}}H_+\Big(\frac{m+4}{2(m+2)},\frac{m+4}{m+2};z\Big)
+e^{-\frac{z}{2}}H_-\Big(\frac{m+4}{2(m+2)},\frac{m+4}{m+2};z\Big)\biggr],
\end{split}
\label{equ:2.4}
\end{equation}
here $z=2i\phi(t)|\xi|$, $i=\sqrt{-1}$, and $H_{\pm}$ are smooth functions of the variable $z$.
By \cite{Tani}, one knows that for $\beta\in\mathbb{N}_0^n$,
\begin{align}
\big| \partial_\xi^\beta H_{+}(\alpha,\gamma;z)
\big|&\leq C(\phi(t)|\xi|)^{\alpha-\gamma}(1+|\xi|^2)^{-\frac{|\beta|}{2}}
\quad if \quad \phi(t)|\xi|\geq 1, \label{equ:2.5} \\
\big| \partial_\xi^\beta H_{-}(\alpha,\gamma;z)\big|&\leq C(\phi(t)|\xi|)^{-\alpha}(1+|\xi|^2)^{-\frac{|\beta|}{2}}
\quad if \quad \phi(t)|\xi|\geq 1. \label{equ:2.6}
\end{align}
To estimate $v$, it  only suffices to deal with $V_1(t, D_x)f(x)$  since the treatment
on $V_2(t, D_x)g(x)$ is similar. Indeed, if one just notices a simple fact of $t\phi(t)^{-\frac{m+4}{2(m+2)}}=
C_m\phi(t)^{-\frac{m}{2(m+2)}}$, it then follows from the expressions
of $V_1(t,\xi)$ and $V_2(t,\xi)$ that the orders of $t$ in $V_1(t,\xi)$ and $V_2(t,\xi)$ are the same.
Therefore, without loss of generality, we assume $g(x)\equiv 0$ in  \eqref{equ:2.1}
from now on.

Choose a cut-off function $\chi(s)\in C^{\infty}(\Bbb R)$
with $\chi(s)=
\left\{ \enspace
\begin{aligned}
1, \quad &s\geq2 \\
0, \quad &s\leq1
\end{aligned}
\right.$. Then
\begin{equation}
\begin{split}
V_1(t,|\xi|)\hat{f}(\xi)&=\chi(\phi(t)|\xi|)V_1(t,|\xi|)\hat{f}(\xi)+(1-\chi(\phi(t)|\xi|))V_1(t,|\xi|)\hat{f}(\xi) \\
&=:\hat{v}_1(t,\xi)+\hat{v}_2(t,\xi).
\end{split}
\label{equ:2.7}
\end{equation}
By \eqref{equ:2.3}, \eqref{equ:2.5} and \eqref{equ:2.6}, we derive that
\begin{equation}
{v}_1(t,x)=C_m\biggl(\int_{\mathbb{R}^n}e^{i(x\cdot\xi+\phi(t)|\xi|)}a_{11}(t,\xi)\hat{f}(\xi)\md\xi+
\int_{\mathbb{R}^n}e^{i(x\cdot\xi-\phi(t)|\xi|)}a_{12}(t,\xi)\hat{f}(\xi)\md\xi\biggr), \label{equ:2.8}
\end{equation}
where $C_m>0$ is a generic constant depending on $m$, and for $\beta\in\mathbb{N}_0^n$,
\begin{equation*}
\big| \partial_\xi^\beta a_{1l}(t,\xi)\big|\leq C_{l\beta}|\xi|^{-|\beta|}\big(1+\phi(t)|\xi|\big)^{-\frac{m}{2(m+2)}},
\qquad l=1,2.
\end{equation*}
Next we analyze $v_2(t,x)$. It follows from \cite{Erd1} or \cite{Yag2} that
\begin{equation*}
V_1(t,|\xi|)=e^{-\frac{z}{2}}\Phi\Big(\frac{m}{2(m+2)},\frac{m}{m+2};z\Big), 
\end{equation*}
where $\Phi$ is the confluent hypergeometric function which is analytic with respect to the variable
$z=2i\phi(t)|\xi|$. Then
\begin{equation*}
\Big|\partial_\xi\big\{\big(1-\chi(\phi(t)|\xi|)\big)V_1(t,|\xi|)\big\}\Big|\leq C(1+\phi(t)|\xi|)^{-\frac{m}{2(m+2)}}|\xi|^{-1}.
\end{equation*}
Similarly, one has
\begin{equation*}
\Big|\partial_\xi^{\beta}\big\{\big(1-\chi(\phi(t)|\xi|)\big)V_1(t,|\xi|)\big\}\Big|\leq
C(1+\phi(t)|\xi|)^{-\frac{m}{2(m+2)}}|\xi|^{-|\beta|}.
\end{equation*}
Thus we arrive at
\begin{equation}
v_2(t,x)=C_m\biggl(\int_{\mathbb{R}^n}e^{i(x\cdot\xi+\phi(t)|\xi|)}a_{21}(t,\xi)\hat{f}(\xi)\md\xi
+\int_{\mathbb{R}^n}e^{i(x\cdot\xi-\phi(t)|\xi|)}a_{22}(t,\xi)\hat{f}(\xi)\md\xi\biggr), \label{equ:2.10}
\end{equation}
where, for $\beta\in\mathbb{N}_0^n$,
\begin{equation*}
\big| \partial_\xi^\beta a_{2l}(t,\xi)\big|\leq C_{l\beta}\big(1+\phi(t)|\xi|\big)^{-\frac{m}{2(m+2)}}|\xi|^{-|\beta|},
\qquad l=1,2.
\end{equation*}
Substituting \eqref{equ:2.8} and \eqref{equ:2.10} into \eqref{equ:2.7} yields
\[V_1(t, D_x)f(x)=C_m\biggl(\int_{\mathbb{R}^n}e^{i(x\cdot\xi+\phi(t)|\xi|)}a_1(t,\xi)\hat{f}(\xi)\md\xi
+\int_{\mathbb{R}^n}e^{i(x\cdot\xi-\phi(t)|\xi|)}a_2(t,\xi)\hat{f}(\xi)\md\xi\biggr),\]
where $a_l$ $(l=1,2)$ satisfies
\begin{equation}
|\partial_\xi^\beta a_l(t,\xi)\big|\leq C_{l\beta}\big(1+\phi(t)|\xi|\big)^{-\frac{m}{2(m+2)}}|\xi|^{-|\beta|}. \label{equ:2.11}
\end{equation}
To estimate $V_1(t, D_x)f(x)$, it only suffices to deal with $\int_{\mathbb{R}^n}e^{i(x\cdot\xi+\phi(t)|\xi|)}a_1(t,\xi)\hat{f}(\xi)\md\xi$
since the term $\int_{\mathbb{R}^n}e^{i(x\cdot\xi-\phi(t)|\xi|)}a_2(t,\xi)\hat{f}(\xi)\md\xi$ can be  analogously treated.
Set
\begin{equation*}
(Af)(t,x)=:\int_{\mathbb{R}^n}e^{i(x\cdot\xi+\phi(t)|\xi|)}a_1(t,\xi)\hat{f}(\xi)\md\xi. 
\end{equation*}
Let $\beta(\tau)\in C_0^\infty(\frac{1}{2},2)$ such that
\begin{equation}
\sum\limits_{j=-\infty}^\infty\beta(\frac{\tau}{2^j}) \equiv1\quad \text{for $\tau\in\mathbb{R}_+$.} \label{equ:2.13}
\end{equation}
To estimate $(Af)(t,x)$, we now study its corresponding dyadic operators
\begin{equation*}
\begin{split}
(A_jf)(t,x)&=\int_{\mathbb{R}^n}e^{i(x\cdot\xi+\phi(t)|\xi|)}\beta(\frac{|\xi|}{2^j})
a_1(t,\xi)\hat{f}(\xi)\md\xi \\
&=:\int_{\mathbb{R}^n}e^{i(x\cdot\xi+\phi(t)|\xi|)}a_j(t,\xi)\hat{f}(\xi)\md\xi, \\
\end{split}
\end{equation*}
where $j\in\Bbb Z$. Note that the kernel of operator $A_j$ is
\begin{equation*}
K_j(t,x;y)=\int_{\mathbb{R}^n}e^{i((x-y)\cdot\xi+\phi(t)|\xi|)}a_j(t,\xi)\md\xi,
\end{equation*}
where $|y|\le M-1$ because of $\operatorname{supp} f\subseteq \{x: |x|\leq M-1\}$. By (3.29) of \cite{Gl2}, we have that for any $N\in\Bbb {\Bbb R^+}$,
\begin{equation}\label{equ:2.15}
\begin{split}
|K_j(t,x;y)|\leq &C_{m,n,N}\lambda_j^{\frac{n+1}{2}}(1+\phi(t)\lambda_j)^{-\frac{m}{2(m+2)}}
\big(\phi(t)+\lambda_j^{-1}\big)^{-\frac{n-1}{2}}\big(1+\lambda_j\big||x-y|-\phi(t)\big|\big)^{-N},
\end{split}
\end{equation}
where $\lambda_j=2^j$. Since the solution $v$ of \eqref{equ:2.1} is smooth and has compact support on the variable
$x$ for any fixed time, one easily knows that (2.2) holds in any fixed domain $[0, T]\times\Bbb R^n$.
Therefore, in order to prove (2.2), it suffices to consider the case of $\phi(t)\gg M$. At this time,
the following two cases will be studied separately.

\subsection{\boldmath$\big||x-y|-\phi(t)\big|\gg M$}\label{sec2:big}

For this case, there exist two positive constants $C_1$ and $C_2$ such that
\[C_1\big||x-y|-\phi(t)\big|\geq\big||x|-\phi(t)\big|\geq C_2\big||x-y|-\phi(t)\big|\gg M.\]
If $j\geq0$, we then take $N=\frac{n}{2}+\frac{1}{m+2}+\delta$ in \eqref{equ:2.15} and obtain
\begin{equation*}
\begin{split}
|K_j(t,x;y)|&\leq C_{m,n,\delta}\lambda_j^{\frac{n+1}{2}-\frac{m}{2(m+2)}}
\phi(t)^{-\frac{n-1}{2}-\frac{m}{2(m+2)}}\lambda_j^{-\frac{n}{2}-\frac{1}{m+2}-\delta}
\big||x|-\phi(t)\big|^{-\frac{n}{2}-\frac{1}{m+2}-\delta} \\
&\leq C_{m,n,\delta}\lambda_j^{-\delta}\big(1+\phi(t)\big)^{-\frac{n-1}{2}
-\frac{m}{2(m+2)}}\big(1+\big||x|-\phi(t)\big|\big)^{-\frac{n}{2}-\frac{1}{m+2}-\delta}.
\end{split}
\end{equation*}
For $j<0$, taking $N=\frac{n}{2}+\frac{1}{m+2}-\delta$ in \eqref{equ:2.15}  we arrive at
\begin{equation*}
\begin{split}
|K_j(t,x;y)|&\leq C_{m,n,\delta}\lambda_j^{\frac{n+1}{2}-\frac{m}{2(m+2)}}
\phi(t)^{-\frac{n-1}{2}-\frac{m}{2(m+2)}}\lambda_j^{-\frac{n}{2}-\frac{1}{m+2}+\delta}
\big||x|-\phi(t)\big|^{-\frac{n}{2}-\frac{1}{m+2}+\delta} \\
&\leq C_{m,n,\delta}\lambda_j^\delta\big(1+\phi(t)\big)^{-\frac{n-1}{2}-\frac{m}{2(m+2)}}\big(1+\big||x|-\phi(t)\big|\big)^{-\frac{n}{2}-\frac{1}{m+2}+\delta}.
\end{split}
\end{equation*}
It follows from $f(x)\in C_0^\infty(\mathbb{R}^n)$ and direct computation that
\begin{equation} \label{equ:2.16}
|A_jf|\leq
\left\{ \enspace
\begin{aligned}
&C_{m,n,\delta}\lambda_j^\delta\big(1+\phi(t)\big)^{-\frac{n-1}{2}-\frac{m}{2(m+2)}}
\big(1+\big||x|-\phi(t)\big|\big)^{-\frac{n}{2}-\frac{1}{m+2}+\delta}\|f\|_{L^1(\Bbb R^n)}, &&j<0,\\
&C_{m,n,\delta}\lambda_j^{-\delta}\big(1+\phi(t)\big)^{-\frac{n-1}{2}-\frac{m}{2(m+2)}}
\big(1+\big||x|-\phi(t)\big|\big)^{-\frac{n}{2}-\frac{1}{m+2}-\delta}\|f\|_{L^1(\Bbb R^n)}, &&j\geq0.
\end{aligned}
\right.
\end{equation}
Summing the right sides of \eqref{equ:2.16}, we get that for large $\phi(t)$ and $\big||x|-\phi(t)\big|$,
\begin{equation}\label{equ:2.17}
|v|\leq C_{m,n,\delta}\big(1+\phi(t)\big)^{-\frac{n-1}{2}-\frac{m}{2(m+2)}}
\big(1+\big||x|-\phi(t)\big|\big)^{-\frac{n}{2}-\frac{1}{m+2}+\delta}\|f\|_{L^1(\Bbb R^n)}.
\end{equation}

\subsection{\boldmath$\big||x-y|-\phi(t)\big|\le C M$}\label{sec2:small}

By the similar method as in Case 1, we can establish that for $t>1$,
\begin{equation}\label{equ:2.18}
\parallel v(t,\cdot)\parallel_{L^\infty(\mathbb{R}^n)}\leq C_{m,n,\delta}\phi(t)^{-\frac{n-1}{2}-\frac{m}{2(m+2)}}\parallel f\parallel_{W^{\frac{n}{2}+\frac{1}{m+2}+\delta,1}(\mathbb{R}^n)},
\end{equation}
where $0<\delta<\frac{n}{2}+\frac{1}{m+2}-\gamma-\frac{1}{q}$ is a positive constant.

Indeed, note that
\begin{equation*}
\begin{split}
|A_jf|&=\bigg|\int_{\mathbb{R}^n}e^{i(x\cdot\xi+\phi(t)|\xi|)}\frac{a_j(t,\xi)}{|\xi|^\alpha}\widehat{|D_x|^\alpha f}(\xi)\md\xi\bigg|,
\end{split}
\end{equation*}
where $\alpha=\frac{n}{2}+\frac{1}{m+2}+\delta$. Then by the stationary phase method, we have
that for $j\ge 0$,
\begin{equation}\label{equ:2.19}
\begin{split}
|A_jf|&\leq C_{m,n,\delta}\lambda_j^{-\alpha}\lambda_j^{\frac{n+1}{2}}
\big(1+\phi(t)\lambda_j\big)^{-\frac{m}{2(m+2)}}\big(\phi(t)+\lambda_j^{-1}\big)^{-\frac{n-1}{2}}
\parallel f\parallel_{W^{\frac{n}{2}+\frac{1}{m+2}+\delta,1}(\mathbb{R}^n)} \\
&\leq C_{m,n,\delta}\lambda_j^{-\delta}\big(1+\phi(t)\big)^{-\frac{n-1}{2}-\frac{m}{2(m+2)}}
\parallel f\parallel_{W^{\frac{n}{2}+\frac{1}{m+2}+\delta,1}(\mathbb{R}^n)}.
\end{split}
\end{equation}
Similarly, for $j<0$, we have
\begin{equation}\label{equ:2.20}
|A_jf|\leq C_{m,n,\delta}\lambda_j^\delta\big(1+\phi(t)\big)^{-\frac{n-1}{2}-\frac{m}{2(m+2)}}
\|f\|_{W^{\frac{n}{2}+\frac{1}{m+2}-\delta,1}(\mathbb{R}^n)}.
\end{equation}
Summing all the terms in \eqref{equ:2.19} and \eqref{equ:2.20} yields \eqref{equ:2.18}.

Therefore, it follows from \eqref{equ:2.17} and \eqref{equ:2.18} that
\begin{equation}\label{equ:2.22}
|v|\leq C_{m,n,\delta}(1+\phi(t))^{-\frac{n-1}{2}-\frac{m}{2(m+2)}}(1+\big||x|-\phi(t)\big|)^{-\frac{n}{2}-\frac{1}{m+2}+\delta}
\|f\|_{W^{\frac{n}{2}+\frac{1}{m+2}-\delta,1}(\mathbb{R}^n)}.
\end{equation}
Next we compute the integral in the left hand side of \eqref{equ:2.2} by using \eqref{equ:2.22} and the polar coordinate transformation.
\begin{equation}\label{equ:2.23}
\begin{split}
&\Big\|\big((\phi(t)+M)^2-|x|^2\big)^\gamma v\Big\|_{L^q(\mathbb{R}^{1+n}_+)}^q \\
&\le C_{m,n,\delta}\|f\|_{W^{\frac{n}{2}+\frac{1}{m+2}-\delta,1}}\int_0^\infty\int_{\mathbb{R}^n}
\bigg(\Big(\big(\phi(t)+M\big)^2-|x|^2\Big)^\gamma\big(1+\phi(t)\big)^{-\frac{n-1}{2}-\frac{m}{2(m+2)}} \\
&\qquad\qquad \qquad\qquad \qquad  \qquad \qquad  \times\Big(1+\big||x|-\phi(t)\big|\Big)^{-\frac{n}{2}-\frac{1}{m+2}+\delta}\bigg)^q dxdt
\\
&\leq C_{m,n,\delta}\|f\|_{W^{\frac{n}{2}+\frac{1}{m+2}-\delta,1}}
\int_0^\infty\int_0^\infty\Big(\big(\phi(t)+M+r\big)^\gamma\big(\phi(t)+M-r\big)^\gamma \\
&\qquad\qquad \qquad  \qquad  \qquad  \times\big(1+\phi(t)\big)^{-\frac{n-1}{2}-\frac{m}{2(m+2)}}
\big(1+|r-\phi(t)|\big)^{-\frac{n}{2}-\frac{1}{m+2}+\delta}\Big)^q r^{n-1}drdt
\\
&\leq C_{m,n,\delta}\|f\|_{W^{\frac{n}{2}+\frac{1}{m+2}-\delta,1}}
\int_0^\infty\int_0^\infty\Big(\big(1+\phi(t)\big)^{-\frac{n-1}{2}-\frac{m}{2(m+2)}+\gamma} \\
&\qquad\qquad  \qquad\qquad \qquad  \qquad  \qquad \quad \times\big(1+|r-\phi(t)|\big)^{\gamma-\frac{n}{2}
-\frac{1}{m+2}+\delta}\Big)^q r^{n-1}drdt.
\end{split}
\end{equation}
Notice that by our assumption, $\gamma-\frac{n-1}{2}-\frac{m}{2(m+2)}<(\frac{m}{m+2}-n)\frac{1}{q}$ holds.
Then we can choose two constants $\sigma>0$ and $\dl>0$ such that
\[\gamma-\frac{n-1}{2}-\frac{m}{2(m+2)}<\Big(\frac{m}{m+2}-n\Big)\frac{1}{q}-\sigma\]
and
\[\Big(\gamma-\frac{n}{2}-\frac{1}{m+2}+\delta\Big)q<-1.\]
Then for some positive constant $\bar{\sigma}>0$, the integral in the last line of \eqref{equ:2.23} can be controlled by
\begin{equation*}
\begin{split}
&\int_0^\infty\int_0^\infty\big(1+\phi(t)\big)^{\frac{m}{m+2}-n-\bar{\sigma}}
\big(1+\big|r-\phi(t)\big|\big)^{-1-\bar{\sigma}}r^{n-1}drdt \\
&\leq C\int_0^\infty\big(1+\phi(t)\big)^{\frac{m}{m+2}-n-\bar{\sigma}}\big(1+\phi(t)\big)^{n-1}dt\\
&\leq C.
\end{split}
\end{equation*}
This, together with \eqref{equ:2.23}, yields (2.2).
\end{proof}

\section{The weighted Strichartz inequality for the inhomogeneous generalized Tricomi equation}

In this section, based on the assumptions of two endpoint inequalities (whose proofs are given in $\S 4$ and $\S 5$
respectively), we shall establish
a general weighted Strichartz inequality for
the following inhomogeneous Tricomi equation
\begin{equation}
\begin{cases}
&\partial_t^2 w-t^m\triangle w=F(t,x), \\
&w(0,x)=0,\quad \partial_tw(0,x)=0.
\end{cases}
\label{equ:3.1}
\end{equation}
Our first result is:

\begin{theorem}\label{thm:inhomogeneous estimate}
For problem \eqref{equ:3.1}, if $F(t,x)\equiv0$ when
$|x|>\phi(t)-1$, then there exist some constants $\gamma_1$ and $\gamma_2$ satisfying $0<\gamma_1
<\frac{(m+2)n-2}{2(m+2)}-\frac{(m+2)n-m}{(m+2)q}$
and $\gamma_2>\frac{1}{q}$, such that
\begin{equation}
\begin{split}
\big\|\big(\phi(t)^2-|x|^2\big)^{\gamma_1}w\big\|_{L^q(\mathbb{R}^{1+n}_+)}\leq C
\big\|\big(\phi(t)^2-|x|^2\big)^{\gamma_2}F\big\|_{L^{\frac{q}{q-1}}(\mathbb{R}^{1+n}_+)},
\end{split}
\label{equ:3.2}
\end{equation}
where $q>\f{2((m+2)n-m)}{(m+2)n-2}$,
and $C>0$ is a positive constant  depending on $m$, $n$, $q$, $\g_1$ and $\gamma_2$.
\end{theorem}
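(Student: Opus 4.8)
The plan is to reduce the inhomogeneous estimate \eqref{equ:3.2} to a $TT^*$-type argument anchored on the Fourier integral operators $T_z$ and $\tilde T_z$ displayed in the introduction, and then to recover the full range of weights by complex interpolation. First I would use Duhamel's principle to write $w(t,x)=\int_0^t W(t,s;D_x)F(s,\cdot)\,ds$, where $W(t,s;D_x)$ is the solution operator of the homogeneous generalized Tricomi equation with Cauchy data posed at time $s$; this is the natural analogue of the representation used in \S 2, and the same asymptotic expansions \eqref{equ:2.3}--\eqref{equ:2.6} (with $\phi(t)$ replaced by $\phi(t)-\phi(s)$ in the phase and extra factors $(1+\phi(s)|\xi|)^{-m/(2(m+2))}$ from the data at time $s$) apply, which is precisely why the kernel $\tilde T_z$ carries the two symbol factors $(1+\phi(t)|\xi|)^{-m/(2(m+2))}(1+\phi(s)|\xi|)^{-m/(2(m+2))}$. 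Since $F$ is supported in $|x|\le\phi(t)-1$, the weight $\phi(t)^2-|x|^2=(\phi(t)-|x|)(\phi(t)+|x|)$ is positive on the support, and for $\phi(t)\gg M$ it is comparable to $(\phi(t)+|x|)\big(\phi(t)-|x|\big)$, so the weighted norms are genuine and one can freely pass between $\phi(t)^2-|x|^2$ and $(\phi(t)+M)^2-|x|^2$ up to harmless constants.

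Next I would set up the analytic family of operators. Writing $\gamma_1,\gamma_2$ in terms of a complex parameter $z$ — schematically absorbing the weight $(\phi(t)^2-|x|^2)^{\gamma_1}$ into the output side and $(\phi(t)^2-|x|^2)^{-\gamma_2}$ into the input side, together with the factor $|\xi|^{-z}$ in $T_z$, $\tilde T_z$ and the Gaussian regularizer $e^{z^2}$ that guarantees admissible growth on vertical strips — one obtains an analytic family $\mathcal T_z$ to which Stein's interpolation theorem applies. The two endpoints are exactly the two inequalities announced as being proved in \S 4 and \S 5: one is an $L^2\to L^2$ (or $L^{q/(q-1)}\to L^q$ on the critical line) estimate obtained by exploiting the oscillation in $\xi$ and the decay of the kernel in $\big||x-y|-(\phi(t)-\phi(s))\big|$, in the spirit of \eqref{equ:2.15}; the other is an $L^1\to L^\infty$-type dispersive bound coming from the pointwise kernel estimate, analogous to how \eqref{equ:2.18} was derived from \eqref{equ:2.15} via stationary phase. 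Interpolating between them produces \eqref{equ:3.2} for all $q>\frac{2((m+2)n-m)}{(m+2)n-2}$ and all admissible pairs $(\gamma_1,\gamma_2)$ with $0<\gamma_1<\frac{(m+2)n-2}{2(m+2)}-\frac{(m+2)n-m}{(m+2)q}$ and $\gamma_2>\frac1q$.

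The technical heart — and the step I expect to be the main obstacle — is handling the degeneracy of $\p_t^2-t^m\Delta$ near $t=0$ together with the splitting of the integration region $\{(t,x):\phi(t)^2-|x|^2\le 1\}$ into a ``relatively large time'' piece and a ``relatively small time'' piece, as flagged in the introduction. On the large-time piece the kernel behaves essentially like the wave kernel with the extra amplitude gain $(1+\phi(t)|\xi|)^{-m/(2(m+2))}$, and the argument parallels \cite{Gls,Gl2}; on the small-time piece the factor $\phi(t)=\frac{2}{m+2}t^{(m+2)/2}$ degenerates, the Bessel/confluent-hypergeometric asymptotics \eqref{equ:2.5}--\eqref{equ:2.6} are no longer the right expansion, and one must instead use the low-frequency (or $\phi(t)|\xi|\lesssim 1$) bound for $H_\pm$ together with the compact $t$-interval $[0,T_0]$ with $T_0<1$. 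Carefully matching the two regimes so that the weighted $L^{q/(q-1)}\to L^q$ bounds glue with constants independent of the splitting parameter is where the ``involved analysis and careful observations'' mentioned in the introduction will be needed; this is precisely the content deferred to \S 4 and \S 5, which here we take as given. Granting those two endpoint inequalities, the interpolation and the Duhamel bookkeeping above complete the proof of Theorem 3.1.
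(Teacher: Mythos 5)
You have the skeleton of the paper's argument — reduce via Duhamel, defer the hard estimates to \S 4 and \S 5, and then interpolate — but you misidentify what the two postulated endpoint inequalities actually are, and this is not a cosmetic slip. The paper's proof of Theorem~3.1 is only a few lines: it \emph{assumes} inequality \eqref{equ:3.3}, which is the weighted $L^{q_0/(q_0-1)}\to L^{q_0}$ bound at the fixed special exponent $q_0=\frac{2((m+2)n+2)}{(m+2)n-2}$ with $\gamma_1<\tfrac1{q_0}<\gamma_2$, and inequality \eqref{equ:3.4}, the weighted $L^2\to L^2$ bound at $q=2$ with $\gamma_1<\tfrac{m-2}{2(m+2)}$, $\gamma_2>\tfrac12$, and then obtains \eqref{equ:3.2} by direct (Riesz--Thorin/Stein--Weiss type) interpolation between these two \emph{dual-pair weighted Strichartz estimates}. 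Your sketch instead describes the two endpoints as ``$L^2\to L^2$'' and ``$L^1\to L^\infty$ dispersive''. That is the classical scheme for \emph{unweighted} Strichartz estimates, and it is not what the paper's \S 4 and \S 5 establish. The $L^1\to L^\infty$ kernel bounds of the type \eqref{equ:2.15} and the stationary-phase estimates do appear, but one level lower: they are among the building blocks used \emph{inside} the complex-interpolation argument of \S 4 (the analytic family $T_z$, with its $L^1\to L^\infty$ estimate \eqref{equ:4.59} at $\mathrm{Re}\,z=\frac{(m+2)n+2}{2(m+2)}+\mu$ and $L^2\to L^2$ estimate \eqref{equ:4.60} at $\mathrm{Re}\,z=0$) to prove the single endpoint \eqref{equ:3.3}.

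The misidentification matters because the lower threshold $q>\frac{2((m+2)n-m)}{(m+2)n-2}$ and the admissible range of $(\gamma_1,\gamma_2)$ in \eqref{equ:3.2} are produced precisely by interpolating between the two \emph{weighted} endpoints at $q=q_0$ and $q=2$; an $L^1\to L^\infty$ endpoint carries no characteristic-cone weight, so interpolating it against $L^2\to L^2$ cannot by itself deliver the weight exponents $\gamma_1,\gamma_2$ of the theorem. Also worth flagging: a $TT^*$ argument and Stein's analytic interpolation are not invoked at the level of Theorem~3.1 itself — the Gaussian factor $e^{z^2}$ and the family $T_z$ live inside the proof of \eqref{equ:3.3}. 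If you promote the structure to Theorem~3.1, you need to explain how the weight $(\phi(t)^2-|x|^2)^{\gamma}$ is carried through the analytic family, which the paper instead handles by stating the two weighted endpoint inequalities up front and interpolating them directly.
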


\begin{proof}
To establish \eqref{equ:3.2},  motivated by \cite{Gls}, we shall assume that \eqref{equ:3.2} holds
for two special cases, namely, \eqref{equ:3.2} holds for  the two endpoints $q=q_0=\frac{2((m+2)n+2)}{(m+2)n-2}$
(corresponding to the extreme cases of $\gamma_1=\frac{(m+2)n-2}{2(m+2)}-\frac{(m+2)n-m}{(m+2)q}$, $\gamma_2=\frac{1}{q}$
and $\gamma_1=\gamma_2$ in the inequality \eqref{equ:3.2}) and $q=2$
(corresponding $q=\f{q}{q-1}$ for the inequality \eqref{equ:3.2}):
\begin{equation}
\begin{split}
\big\|\big(\phi(t)^2-|x|^2\big)^{\gamma_1}&w\big\|_{L^{q_0}(\mathbb{R}^{1+n}_+)}\leq C
\big\|\big(\phi(t)^2-|x|^2\big)^{\gamma_2}F\big\|_{L^{\frac{q_0}{q_0-1}}(\mathbb{R}^{1+n}_+)},
\end{split}
\label{equ:3.3}
\end{equation}
where $\gamma_1<\frac{1}{q_0}$ and $\gamma_2>\frac{1}{q_0}$; and
\begin{equation}
\begin{split}
\big\|\big(\phi(t)^2-|x|^2\big)^{\gamma_1}&w\big\|_{L^2(\mathbb{R}^{1+n}_+)}\leq C
\big\|\big(\phi(t)^2-|x|^2\big)^{\gamma_2}F\big\|_{L^2(\mathbb{R}^{1+n}_+)}, \\
\end{split}
\label{equ:3.4}
\end{equation}
where $\gamma_1<\frac{m-2}{2(m+2)}$ and $\gamma_2>\frac{1}{2}$.
Since the proofs of \eqref{equ:3.3} and \eqref{equ:3.4} involve many technical analysis,
we will postpone them in next two sections respectively.
It follows from (3.3)-(3.4) and direct interpolation that (3.2) can be derived.
\end{proof}

Based on Theorem 3.1, in order to prove Theorem 1.1, we require to establish  a crucial result
as follows:

\begin{theorem}
For problem \eqref{equ:3.1}, if $F(t,x)\equiv0$ when
$|x|>\phi(t)+M-1$ and $F\in C^{\infty}([0, T_0]\times\Bbb R^n)$ for some fixed
number $T_0>0$ $(T_0<1)$, then there exist some constants $\gamma_1$ and $\gamma_2$ satisfying $0<\gamma_1<\frac{(m+2)n-2}{2(m+2)}-\frac{(m+2)n-m}{(m+2)q}$, $\gamma_2>\frac{1}{q}$, such that
\begin{equation}
\begin{split}
\Big\|\Big(\big(\phi(t)+M\big)^2-|x|^2\Big)^{\gamma_1}w\Big\|_{L^q([\f{T_0}{2}, \infty)\times \mathbb{R}^{n})}\leq C
\Big\|\Big(\big(\phi(t)+M\big)^2-|x|^2\Big)^{\gamma_2}F\Big\|_{L^{\frac{q}{q-1}}([\f{T_0}{2}, \infty)\times \mathbb{R}^{n})},
\end{split}
\label{equ:3.5}
\end{equation}
where $q>\f{2((m+2)n-m)}{(m+2)n-2}$, and $C>0$ is a positive constant  depending on $m$, $n$, $q$, $\g_1$ and $\gamma_2$.
\end{theorem}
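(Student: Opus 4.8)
The plan is to deduce Theorem~3.2 from Theorem~3.1 by separating the time interval $[\tfrac{T_0}{2},\infty)$ into a compact piece $[\tfrac{T_0}{2}, 2T_0]$ and the remaining piece $[2T_0,\infty)$, and by absorbing the shift from the weight $\phi(t)^2-|x|^2$ to the weight $(\phi(t)+M)^2-|x|^2$. On the compact time slab $[\tfrac{T_0}{2},2T_0]$, the support condition $|x|\le \phi(t)+M-1$ confines $x$ to a bounded set, $F\in C^\infty$, and both weights $\big((\phi(t)+M)^2-|x|^2\big)^{\gamma_i}$ are bounded above and below by positive constants; hence \eqref{equ:3.5} restricted to this slab follows from the standard local energy estimate for $\p_t^2 w - t^m\Delta w = F$ with vanishing Cauchy data, together with Sobolev/H\"older embeddings. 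The only subtlety is the (possible) degeneracy of the coefficient $t^m$ at $t=0$, which is harmless because $\tfrac{T_0}{2}>0$; so the operator is strictly hyperbolic with smooth coefficients on the slab.

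Next I would handle the main region $t\ge 2T_0$. Here the point is that for $t$ bounded away from zero one has $\phi(t)\gtrsim 1$, so $M$ is a harmless finite shift: on the support $|x|\le \phi(t)+M-1$ one has the two-sided comparison
\begin{equation*}
c\,\big(\phi(t)^2-|x|^2\big)\ \le\ \big(\phi(t)+M\big)^2-|x|^2\ \le\ C\,\big(\phi(t)^2-|x|^2 + 1\big),
\end{equation*}
and, more usefully, after a translation of the spatial variable or a comparison with a slightly enlarged cone, one can reduce to the ``geometric'' weight $\phi(t)^2-|x|^2$ appearing in Theorem~3.1. Concretely I would introduce $\tilde\phi(t)=\phi(t)+M$ and observe that the inhomogeneous Tricomi solution with source supported in $\{|x|\le \phi(t)+M-1\}$ can be majorized, via the representation formula and finite propagation speed of $\p_t^2-t^m\Delta$ in the $\phi(t)$-cone, by the solution associated with the shifted cone; applying Theorem~3.1 to this comparison solution then yields \eqref{equ:3.5} on $[2T_0,\infty)$ with the $\tilde\phi$-weight. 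Splitting the source $F=F\mathbf 1_{t\le 2T_0}+F\mathbf 1_{t>2T_0}$ and using that the contribution of $F\mathbf 1_{t\le 2T_0}$ propagates into $t>2T_0$ as a \emph{homogeneous} solution with data at $t=2T_0$ controlled by the compact-slab estimate (and then estimated by Theorem~\ref{thm:homogeneous estimate} with the roles of the weights), one closes the argument by adding the two contributions.

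The step I expect to be the main obstacle is the passage from the intrinsic weight $\phi(t)^2-|x|^2$ (for which Theorem~3.1 is stated, with source supported in $|x|\le\phi(t)-1$) to the shifted weight $(\phi(t)+M)^2-|x|^2$ with source supported in $|x|\le\phi(t)+M-1$: one must verify that this shift is genuinely compatible with the Fourier-integral-operator machinery underlying Theorem~3.1, i.e. that replacing $\phi(t)$ by $\phi(t)+M$ in the phase and amplitude does not destroy the kernel bounds \eqref{equ:2.15} (it does not, since $M$ is a fixed constant and for $t\ge 2T_0$ one has $\phi(t)+M\approx\phi(t)$, so all the decay exponents are preserved up to harmless constants). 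A careful bookkeeping of the weight exponents is needed to check that the admissible ranges $0<\gamma_1<\frac{(m+2)n-2}{2(m+2)}-\frac{(m+2)n-m}{(m+2)q}$ and $\gamma_2>\frac1q$ survive both the compact-slab estimate and the comparison argument; once that is done, the two regions are combined by the triangle inequality in $L^q$ and $L^{q/(q-1)}$ to obtain \eqref{equ:3.5}.
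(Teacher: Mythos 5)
Your proposal has a genuine gap in the central step, and the paper's mechanism is different from what you sketch. The crucial difficulty is not temporal but spatial: Theorem~3.1 applies only to sources supported in $\{|x|\le\phi(t)-1\}$, which lies strictly \emph{inside} the characteristic cone $\{|x|<\phi(t)\}$, whereas Theorem~3.2 allows the source (and hence the solution) to live in $\{|x|\le\phi(t)+M-1\}$, which spills outside that cone since $M>1$. In the annular region $\phi(t)\le|x|\le\phi(t)+M-1$ the quantity $\phi(t)^2-|x|^2$ is non-positive, so your two-sided comparison
$c\big(\phi(t)^2-|x|^2\big)\le\big(\phi(t)+M\big)^2-|x|^2$ is vacuous there, and Theorem~3.1's weight cannot be compared to the shifted weight at all. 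Your alternative idea of introducing $\tilde\phi(t)=\phi(t)+M$ does not repair this: the function $\phi$ enters Theorem~3.1 through the FIO phase $(x-y)\cdot\xi\pm\phi(t)|\xi|$, which is tied to the characteristics of $\partial_t^2-t^m\Delta$ (i.e.\ $\phi'(t)=t^{m/2}$), and replacing $\phi(t)$ by $\phi(t)+M$ there gives a parametrix for a different operator, not for the Tricomi operator. The kernel bounds \eqref{equ:2.15} are not ``preserved up to harmless constants'' because the shift changes the equation, not merely the weight.

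The paper closes this gap by a geometric decomposition that exploits translation invariance of $\partial_t^2-t^m\Delta$ in the \emph{spatial} variable. It covers $Q=\{t\ge T_0/2,\ |x|\le\phi(t)+M-1\}$ by finitely many curved cones $Q_j$, each a spatial translate $x\mapsto x-\nu_j$ of the reference cone $Q_1=\{t\ge T_0/2,\ |x|\le\phi(t)-\phi(T_0/4)\}$, and decomposes $F=\sum_j F_j$ with $\operatorname{supp}F_j\subseteq Q_j$. On each $Q_j$ the source is strictly inside a translated characteristic cone, so Theorem~3.1 applies (after translation) with the weight $\phi(t)^2-|x-\nu_j|^2$. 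The remaining work — and the technical heart of the proof — is the two-sided comparability
\[
c\Big(\big(\phi(t)+M\big)^2-|x|^2\Big)\ \le\ \phi(t)^2-|x-\nu_j|^2\ \le\ C\Big(\big(\phi(t)+M\big)^2-|x|^2\Big)
\quad\text{on }Q_j,
\]
established in the paper's \eqref{equ:3.7} and \eqref{equ:3.15}--\eqref{equ:3.20} by explicit algebra using $t\ge T_0/4$ and $|\nu_j|\le M-1+\phi(3T_0/8)$. Summing over $j$ then yields \eqref{equ:3.5}. Your proposed split into $[T_0/2,2T_0]$ and $[2T_0,\infty)$, and the treatment of the early-time contribution as a homogeneous solution controlled by Theorem~\ref{thm:homogeneous estimate}, is tangential and moreover problematic: the homogeneous estimate needs $W^{s,1}$ norms of the data, which your compact-slab $L^q$ bound does not supply in a form that depends only on $\|(\,\cdot\,)^{\gamma_2}F\|_{L^{q/(q-1)}}$, so the resulting constant would not have the claimed dependence.
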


\begin{remark}
In fact, for the application in the proof of Theorem \ref{thm:global existence} (see Section~\ref{sec:6}), it only suffices to
choose some suitable positive constants $\gamma_1$
and $\gamma_2$  such that $\gamma_1<\frac{(m+2)n-2}{2(m+2)}-\frac{(m+2)n-m}{(m+2)q}$ and $\gamma_2=(q-1)\gamma_1>\frac{1}{q}$
in the inequality \eqref{equ:3.5}.
\end{remark}

\begin{proof}
To prove \eqref{equ:3.5}, at first we focus on a special case of $F(t,x)\equiv0$ when $|x|>\phi(t)-\phi\big(\frac{T_0}{4}\big)$.
By the finite propagation speed property for the hyperbolic equations, we know that
the integral domain in  \eqref{equ:3.5} is just only  $Q=:\{(t,x): t\geq\frac{T_0}{2}, |x|\leq\phi(t)+M-1\}$.
Note that $Q$ can be covered by a finite number of curved cones $\{Q_j\}_{j=1}^{N_0}$, where the curved cone $Q_j$
$(j\ge 2)$
is a shift in the $x$ variable with respect to the curved cone
\[Q_1=\Big\{(t,x): t\ge\frac{T_0}{2}, |x|\leq\phi(t)-\phi\Big(\frac{T_0}{4}\Big)\Big\}.\]
Set
\begin{equation*}
\begin{split}
&F_1=\chi_{Q_1}F, \\
&F_2=\chi_{Q_2}(1-\chi_{Q_1})F,\\
&\qquad \qquad \ldots\\
&F_{N_0}=\chi_{Q_{N_0}}\big(1-\chi_{Q_1}-\chi_{Q_2}(1-\chi_{Q_1})-\cdot\cdot\cdot
-\chi_{Q_{N_0-1}}(1-\chi_{Q_1})\cdot\cdot\cdot(1-\chi_{Q_{N_0-2}})\big)F, \\
\end{split}
\end{equation*}
where $\chi_{Q_j}$ stands for the characteristic function of $Q_j$, and $\ds\sum_{j=1}^{N_0}F_j=F$. Let $w_j$ solve
\begin{equation*}
\begin{cases}
&\partial_t^2 w_j-t^m\triangle w_j=F_j(t,x), \\
&w_j(0,x)=0,\quad \partial_tw_j(0,x)=0.
\end{cases}
\end{equation*}
Then $suppw_j\subseteq Q_j$. Since the Tricomi equation is invariant under the translation
with respect to the variable $x$, it follows from Theorem 3.1 that
\begin{equation}\label{equ:3.6}
\big\|\big(\phi(t)^2-|x-\nu_j|^2\big)^{\gamma_1}w_j\big\|_{L^q(Q_j)}\leq C
\big\|\big(\phi(t)^2-|x-\nu_j|^2\big)^{\gamma_2}F_j\big\|_{L^{\frac{q}{q-1}}(Q_j)},
\end{equation}
where $\nu_j\in\mathbb{R}^n$ corresponds to the coordinate shift of the space variable $x$ from $Q_1$ to $Q_j$,
and $Q_j=\{(t,x): t\ge\frac{T_0}{2},
|x-\nu_j|\leq\phi(t)-\phi(\frac{T_0}{4})\}$.

Next we derive \eqref{equ:3.5} by utilizing \eqref{equ:3.6} and the condition of $t\geq\frac{T_0}{4}$.
At first, we illustrate that there exists a constant $\delta>0$ such that for $(t,x)\in Q_j$,
\begin{equation}\label{equ:3.7}
\phi(t)^2-|x-\nu_j|^2\geq\delta\Big(\big(\phi(t)+M\big)^2-|x|^2\Big).
\end{equation}
To prove \eqref{equ:3.7} for $1\le j\le N_0$, it only suffices to consider the two extreme cases: $\nu_j=0$ (corresponding to
$j=1$) and $|\nu_{j_0}|=M-1+\phi(\frac{3T_0}{8})$ (choosing  $j_0$
such that $|\nu_{j_0}|=\max_{1\le j\le N_0}|\nu_j|=M-1+\phi(\frac{3T_0}{8})$. Note that
$|\nu_{j_0}|>M-1$ holds so that the domain $Q$ can be covered by $\ds\cup_{j=1}^{N_0}Q_j$).

For $\nu_j=0$, \eqref{equ:3.7} is equivalent to
\begin{equation}\label{equ:3.8}
\phi(t)^2\geq(1-\delta)|x|^2+\delta\big(\phi(t)+M\big)^2.
\end{equation}
We now illustrate that \eqref{equ:3.8} is correct.
By $|x|\leq\phi(t)-\phi(\frac{T_0}{4})$ for $(t,x)\in Q_1$,
then in order to show \eqref{equ:3.8} it suffices to prove
\begin{equation*}
\begin{split}
\phi(t)^2\geq(1-\delta)\bigg(\phi(t)-\phi\Big(\frac{T_0}{4}\Big)\bigg)^2+\delta\big(\phi(t)+M\big)^2.
\end{split}
\end{equation*}
This is equivalent to
$$\Big\{2(1-\delta)\phi\Big(\frac{T_0}{4}\Big)-2\delta M\Big\}\phi(t)\geq(1-\delta)\phi^2\Big(\frac{T_0}{4}\Big)+\delta M^2.
$$
Obviously, this is easily achieved by $t\geq \frac{T_0}{4}$ and the smallness of $\delta$.

For $\nu_{j_0}=M-1+\phi\big(\frac{3T_0}{8}\big)$, the argument on \eqref{equ:3.7} is a little involved. First, note that
for fixed $t>0$, the domain
$Q$ is symmetric with respect to the variable $x$,
thus we can assume $\nu_{j_0}=(\nu, 0, \ldots, 0)$ with $\nu=|\nu_{j_0}|=M-1+\phi(\frac{3T_0}{8})$.
In this case, setting $x=(x_1, x')$, then \eqref{equ:3.7} is equivalent to
\begin{equation}\label{equ:3.9}
\begin{split}
\phi(t)^2&\geq|x-\nu_{j_0}|^2+\delta\big((\phi(t)+M)^2-|x|^2\big) \\
&=(1-\delta)x_1^2-2\nu x_1+\nu^2+(1-\delta)|x'|^2+\delta\big(\phi(t)+M\big)^2 \\
&=: G(t,x).
\end{split}
\end{equation}
For fixed $t>0$, $G(t,x)$ is a hyperbolic paraboloid for the variable $x$, and
takes minimum at the point $x=(\f{\nu}{1-\delta}, 0)$. Thus for the same fixed $t>0$,
the maximum of $G(t,x)$ in the domain $Q_{j_0}^t=:\{x: |x-\nu_{j_0}|\leq\phi(t)-\phi(\frac{T_0}{4})\}$
must be achieved on the boundary $\p Q_{j_0}^t=:\{x: |x-\nu_{j_0}|=\phi(t)-\phi\big(\frac{T_0}{4}\big)\}$.
Then in order to show
\eqref{equ:3.9}, our task is to prove
\begin{equation}\label{equ:3.10}
\phi(t)^2\geq\bigg(\phi(t)-\phi\Big(\frac{T_0}{4}\Big)\bigg)^2+\delta\Big(\big(\phi(t)+M\big)^2-|x|^2\Big).
\end{equation}
For this purpose, it is only enough to consider the case that $|x|^2$ takes its minimum on $\p Q_{j_0}^t$.
Note that on $\p Q_{j_0}^t$, we have
\begin{equation}\label{equ:3.11}
|x|^2=\bigg(\phi(t)-\phi\Big(\frac{T_0}{4}\Big)\bigg)^2+2\nu x_1-\nu^2.
\end{equation}
Therefore, without loss of generality, we can take
\begin{equation}\label{equ:3.12}
x_1=\nu-\phi(t)+\phi\Big(\frac{T_0}{4}\Big), \quad x'=0.
\end{equation}
Substituting \eqref{equ:3.12} and \eqref{equ:3.11} into \eqref{equ:3.10}, we are left to prove
\begin{equation}\label{equ:3.13}
\begin{split}
\phi(t)^2&\geq\bigg(\phi(t)-\phi\Big(\frac{T_0}{4}\Big)\bigg)^2+\delta\bigg\{\big(\phi(t)+M\big)^2 \\
&\quad -\bigg(\phi(t)-\phi\Big(\frac{T_0}{4}\Big)\bigg)^2+2\nu\bigg(\phi(t)-\phi\Big(\frac{T_0}{4}\Big)\bigg)-\nu^2\bigg\} \\
&=\phi(t)^2+\bigg\{2\delta\bigg(\phi\Big(\frac{T_0}{4}\Big)+M+\nu\bigg)-2\phi\Big(\frac{T_0}{4}\Big)\bigg\}\phi(t) \\
&\quad +(1-\delta)\phi\Big(\frac{T_0}{4}\Big)^2+\delta M^2-\delta\nu\bigg(\nu+2\phi\Big(\frac{T_0}{4}\Big)\bigg).
\end{split}
\end{equation}
For fixed $T_0>0$ and $M>1$, if $\delta>0$ is small enough, one then has
\begin{equation}\label{equ:3.14}
\begin{split}
&2\delta\bigg(\phi\Big(\frac{T_0}{4}\Big)+M+\nu\bigg)\leq\f{1}{2}\phi\Big(\frac{T_0}{4}\Big), \\
&(1-\delta)\phi\Big(\frac{T_0}{4}\Big)^2+\delta M^2\leq\f{3}{2}\phi\Big(\frac{T_0}{4}\Big)^2.
\end{split}
\end{equation}
By \eqref{equ:3.14} and \eqref{equ:3.13}, in order to derive \eqref{equ:3.10}, one should
derive
\[-\f{3}{2}\phi\Big(\frac{T_0}{4}\Big)\phi(t)+\f{3}{2}\phi^2\Big(\frac{T_0}{4}\Big)\leq0.\]
Obviously, this holds true by $t\geq\frac{T_0}{4}$. Then \eqref{equ:3.10} is proved.

Consequently, for $(t,x)\in\bigcup_{j=1}^{N_0}Q_j$,
there exists a fixed positive constant $c>0$ such that for $1\le j\le N_0$,
\begin{equation}\label{equ:3.15}
\begin{split}
c\Big(\big(\phi(t)+M\big)^2-|x|^2\Big)\leq\phi(t)^2-|x-\nu_j|^2.
\end{split}
\end{equation}
Note that by $|x|\leq\phi(t)+M-1$ for $(t,x)\in Q$, one has
\begin{equation}\label{equ:3.16}
\begin{split}
&2\big\{\big(\phi(t)+M\big)^2-|x|^2\big\}-\{\phi^2(t)-|x-\nu_j|^2\} \\
&\geq (|x|+1)^2-|x|^2+\big(\phi(t)+M\big)^2-|x|^2-\phi(t)^2+|x-\nu_j|^2 \\
&=2M\phi(t)+M^2+|\nu_j|^2+1+2(1-|\nu_j|)|x|.
\end{split}
\end{equation}
If $1-|\nu_j|<0$, then by $|\nu_j|\leq M-1+\phi\big(\frac{3T_0}{8}\big)$ and the smallness of
$T_0$, the last line in \eqref{equ:3.16} is bounded from below by
\begin{equation}\label{equ:3.17}
\begin{split}
&2M\phi(t)+M^2+|\nu_j|^2+1+2\Big\{2-M-\phi\Big(\frac{3T_0}{8}\Big)\Big\}\{\phi(t)+M-1\} \\
&=4\phi(t)-M^2+6M-3+|\nu_j|^2-2\phi\Big(\frac{3T_0}{8}\Big)\phi(t)-2(M-1)\phi\Big(\frac{3T_0}{8}\Big) \\
&\geq 2\phi(t)-M^2+1;
\end{split}
\end{equation}
while in the case of $1-|\nu_j|\geq0$, it follows from \eqref{equ:3.16}  that
\begin{equation}\label{equ:3.18}
2\big\{\big(\phi(t)+M\big)^2-|x|^2\big\}-\{\phi(t)^2-|x-\nu_j|^2\}\geq M^2+1>0.
\end{equation}
Substituting \eqref{equ:3.17}-\eqref{equ:3.18} into \eqref{equ:3.16} yields that for $2\phi(t)\geq M^2-1$,
\begin{equation}\label{equ:3.19}
\begin{split}
\phi(t)^2-|x-\nu_j|^2\leq C \Big(\big(\phi(t)+M\big)^2-|x|^2\Big).
\end{split}
\end{equation}
On the other hand, if $2\phi(t)< M^2-1$, then
\begin{equation}\label{equ:3.20}
\begin{split}
\phi(t)^2-|x-\nu_j|^2\leq\phi(t)^2\leq C_M\leq C_M\Big(\big(\phi(t)+M\big)^2-|x|^2\Big).
\end{split}
\end{equation}
Thus it follows from \eqref{equ:3.19}-\eqref{equ:3.20} that \eqref{equ:3.15} holds.
Therefore,
\begin{equation*}
\begin{split}
&\Big\|\Big(\big(\phi(t)+M\big)^2-|x|^2\Big)^{\gamma_1}w\Big\|_{L^q([\frac{T_0}{2},\infty)\times\mathbb{R}^n)}\\
&\leq C\sum_{j=1}^{N_0}\Big\|\Big(\big(\phi(t)+M\big)^2-|x|^2\Big)^{\gamma_1}w_j\Big\|_{L^q(Q_j)} \\
&\leq C\sum_{j=1}^{N_0}\big\|\big(\phi(t)^2-|x-\nu_j|^2\big)^{\gamma_1}w_j\big\|_{L^q(Q_j)}\qquad\qquad \text{\big(by \eqref{equ:3.7}\big)} \\
&\leq C\sum_{j=1}^{N_0}\big\|\big(\phi(t)^2-|x-\nu_j|^2\big)^{\gamma_2}F_j\big\|_{L^{\frac{q}{q-1}}(Q_j)}\qquad\quad \text{\big(by \eqref{equ:3.6}\big)}  \\
&\leq C\sum_{j=1}^{N_0}\Big\|\Big(\big(\phi(t)+M\big)^2-|x|^2\Big)^{\gamma_2}F_j\Big\|_{L^{\frac{q}{q-1}}(Q_j)} \qquad \text{\big(by \eqref{equ:3.15}\big)} \\
&\leq C_{N_0}\Big\|\Big(\big(\phi(t)+M)^2-|x|^2\big)^{\gamma_2}F\Big\|_{L^{\frac{q}{q-1}}([\frac{T_0}{2},\infty)\times\mathbb{R}^n)},
\end{split}
\end{equation*}
which derives \eqref{equ:3.5}.
\end{proof}

\section{The proof of (3.3)}\label{sec4}
One can write inequality \eqref{equ:3.3} as
\begin{align}\label{equ:4.1}
\left\|\lc\phi(t)^2-|x|^2\rc^{\frac{1}{q_0}-\nu}w\right\|_{L^{q_0}(\mathbb{R}^{1+n}_+)}
\leq C\left\|\lc\phi(t)^2-|x|^2\rc^{\frac{1}{q_0}+\nu}F\right\|_{L^{\frac{q_0}{q_0-1}}(\mathbb{R}^{1+n}_+)},
\end{align}
where $\nu>0$, and $q_0=\frac{2((m+2)n+2)}{(m+2)n-2}$.
To prove \eqref{equ:4.1}, it suffices to prove the following inequality for $T\geq\bar{T}$,
\begin{equation}
\begin{split}
\Big\|\big(\phi(t)^2-|x|^2\big)^{\frac{1}{q_0}-\nu}&w\Big\|_{L^{q_0}([\frac{T}{2}, T]\times\mathbb{R}^n)} \leq C\phi(T)^{-\nu}|\ln{T}|^{\frac{1}{q_0}}\Big\|\big(\phi(t)^2-|x|^2\big)^{\frac{1}{q_0}+\nu}F\Big\|_{L^{\frac{q_0}{q_0-1}}
(\mathbb{R}^{1+n}_+)},
\end{split}
\label{equ:4.2}
\end{equation}
where $\bar{T}>0$ is a fixed large constant.
Indeed, Lemma 3.4 of \cite{HWYin} implies
\begin{equation}
\begin{split}
\parallel w\parallel_{L^{q_0}([0, \bar{T}/2]\times\mathbb{R}^n)}
\leq C\parallel F\parallel_{L^{\frac{q_0}{q_0-1}}([0, \bar{T}/2]\times\mathbb{R}^n)}.
\end{split}
\label{equ:H.0}
\end{equation}
When \(0<t\leq \bar{T}\) and \(\phi(t)-|x|\geq1\), the weight function $\phi(t)^2-|x|^2$
in the inequality \eqref{equ:4.2} is bounded from below and above, thus we have
that from \eqref{equ:H.0}
\begin{equation}\label{equ:4.102}
\begin{split}
\parallel(\phi(t)^2-|x|^2)^{\frac{1}{q_0}-\nu}w\parallel_{L^{q_0}([0, \bar{T}/2]\times\mathbb{R}^n)}
\leq C\parallel(\phi(t)^2-|x|^2)^{\frac{1}{q_0}+\nu}F\parallel_{L^{\frac{q_0}{q_0-1}}([0, \bar{T}/2]\times\mathbb{R}^n)}.
\end{split}
\end{equation}
Then summing \eqref{equ:4.2} over all the $T\geq\bar{T}$ together with \eqref{equ:4.102} yields \eqref{equ:4.1}.

Note that $F(t,x)\equiv0$ for $|x|>\phi(t)-1$, then this means
$\operatorname{supp}\ F\subseteq\{(t,x): |x|^2\leq\phi(t)^2-1\}$.
Set $F=F^0+F^1$, where
\begin{equation}
F^0=\left\{ \enspace
\begin{aligned}
&F,  &&\phi(t)\leq\frac{\phi(1)\phi(T)}{10\phi(2)}, \\
&0, &&\phi(t)>\frac{\phi(T)\phi(1)}{10\phi(2)}. \\
\end{aligned}
\right.
\label{equ:4.30}
\end{equation}
Correspondingly, $w=w^0+w^1$, and $w^j$ $(j=0,1)$ solves
\begin{equation*}
\left\{ \enspace
\begin{aligned}
&\partial_t^2w^j-t^m\Delta w^j=F^j \\
&w^j(0,x)=0, \quad \partial_t w^j(0,x)=0.
\end{aligned}
\right.
\end{equation*}
To prove \eqref{equ:4.2}, it suffices to show that for \(j=0,1\),
\begin{equation}
\begin{split}
\Big\|&\big(\phi(t)^2-|x|^2\big)^{\frac{1}{q_0}-\nu}w^j\Big\|_{L^{q_0}(\{(t,x):\frac{T}{2}\leq t\leq T\})}
\leq C\phi(T)^{-\frac{\nu}{4}}\Big\|\big(\phi(t)^2-|x|^2\big)^{\frac{1}{q_0}+\nu}F^j\Big\|_{L^{\frac{q_0}{q_0-1}}(\mathbb{R}^{1+n}_+)}.\\
\end{split}
\label{equ:4.34}
\end{equation}
For this purpose, we shall make some reductions.
By making these reductions, we shall restrict the support of \(F_j\) and \(w_j\)
in certain domains, such that in each domain the characteristic weight $\phi(t)^2-|x|^2$
on both sides of \eqref{equ:4.34} can be
removed. In this case, our task is reduced to prove some unweighted Strichartz estimates,
which is relatively easier than to prove \eqref{equ:4.34} directly by applying some
basic techniques from Fourier integral operators.

Next we give a precise description on the reductions. First, we suppose $\operatorname{supp}F^j\subseteq [T_0, 2T_0]\times\mathbb{R}^n$ for some fixed
constant $T_0>0$ satisfying $\phi(2T_0)\geq1$. Then \eqref{equ:4.34} follows from
\begin{equation}
\begin{split}
\Big\|&\big(\phi(t)^2-|x|^2\big)^{\frac{1}{q_0}-\nu}w^j\Big\|_{L^{q_0}(\{(t,x):\frac{T}{2}\leq t\leq T\})} \\
&\leq C\phi(T_0)^{-\frac{\nu}{4}}\phi(T)^{-\frac{\nu}{4}}\Big\|\big(\phi(t)^2-
|x|^2\big)^{\frac{1}{q_0}+\nu}F^j\Big\|_{L^{\frac{q_0}{q_0-1}}(\mathbb{R}^{1+n}_+)}\\
\end{split}
\label{equ:4.35}
\end{equation}
by summing up all these $T_0$.

Second, we pose more restrictions on the support of \(F_j\). That is, we assume that $F^j\equiv0$ holds
when the value of $\phi(t)-|x|$ is not in $[\delta_0\phi(T_0), 2\delta_0\phi(T_0)]$ for some fixed
constant $\dl_0$ with $0<\delta_0\le 2$ and $\delta_0\phi(T_0)\geq1$, then in order to prove \eqref{equ:4.35} we only need to show
\begin{equation}
\begin{split}
\Big\|&\big(\phi(t)^2-|x|^2\big)^{\frac{1}{q_0}-\nu}w^j\Big\|_{L^{q_0}(\{(t,x):\frac{T}{2}\leq t\leq T\})} \\
&\leq C\phi(T_0)^{-\frac{\nu}{2}}\phi(T)^{-\frac{\nu}{4}}
\Big\|\big(\phi(t)^2-|x|^2\big)^{\frac{1}{q_0}+\nu}F^j\Big\|_{L^{\frac{q_0}{q_0-1}}(\mathbb{R}^{1+n}_+)}.\\
\end{split}
\label{equ:4.36}
\end{equation}

Third, we make a dyadic decomposition on the variable $\phi(t)-|x|$ in the support of $w^j$, such that
in order to prove \eqref{equ:4.36} it suffices to show that for $\delta\geq\delta_0$,
\begin{equation}
\begin{split}
\Big\|&\big(\phi^2(t)-|x|^2\big)^{\frac{1}{q_0}-\nu}w^j\Big\|_{L^{q_0}(\{(t,x):\frac{T}{2}\leq t\leq T, \delta\phi(T_0)\leq\phi(t)-|x|\leq
 2\delta\phi(T_0)\})} \\
&\leq C(\phi(T_0)\phi(T))^{-\frac{\nu}{2}}
\Big\|\big(\phi(t)^2-|x|^2\big)^{\frac{1}{q_0}+\nu}F^j\Big\|_{L^{\frac{q_0}{q_0-1}}(\mathbb{R}^{1+n}_+)}.
\end{split}
\label{equ:4.37}
\end{equation}
With these reductions, to prove \eqref{equ:4.37}, the left task is to establish
\begin{align}\label{equ:4.A}
(\phi(T_0)\phi(T)\delta)^{\frac{1}{q_0}-\nu}\parallel& w^j
\parallel_{L^{q_0}(\{(t,x):\frac{T}{2}\leq t\leq T, \delta\phi(T_0)\leq\phi(t)-|x|\leq
 2\delta\phi(T_0)\})} \no\\
\leq C&(\phi(T_0)\phi(T))^{-\frac{\nu}{2}}(\phi^2(T_0)\delta_0)^{\frac{1}{q_0}+\nu}
\parallel F^j\parallel_{L^{\frac{q_0}{q_0-1}}(\mathbb{R}^{1+n}_+)}.
\end{align}
By rearranging some terms in \eqref{equ:4.A}, then \eqref{equ:4.A} directly follows from
\begin{align}\label{equ:4.B}
\Big(\frac{\phi(T)}{\phi(T_0)}\Big)^{\frac{1}{q_0}-\frac{\nu}{2}}\delta^{\frac{1}{q_0}+\frac{\nu}{2}}\phi(T_0)^{-3\nu}
\delta^{-\f{3}{2}\nu}\delta_0^{-\nu}
\parallel& w^j\parallel_{L^{q_0}(\{(t,x):\frac{T}{2}\leq t\leq T, \delta\phi(T_0)\leq\phi(t)-|x|\leq2\delta\phi(T_0)\})} \no\\
&\leq C\delta_0^{\frac{1}{q_0}}\parallel F^j\parallel_{L^{\frac{q_0}{q_0-1}}(\mathbb{R}^{1+n}_+)}.
\end{align}
Note that
\[\phi(T_0)^{-3\nu}\delta^{-\f{3}{2}\nu}\delta_0^{-\nu}\leq\phi(T_0)^{-3\nu}\delta_0^{-\f{5}{2}\nu}
=(\phi(T_0)\delta_0)^{-3\nu}\delta_0^{\f{\nu}{2}}\leq\delta_0^{\f{\nu}{2}}\leq2^{\f{\nu}{2}}.\]
Therefore, \eqref{equ:4.B} follows from
\begin{equation}\label{equ:4.38}
\begin{split}
\Big(\frac{\phi(T)}{\phi(T_0)}\Big)^{\frac{1}{q_0}-\frac{\nu}{2}}\delta^{\frac{1}{q_0}+\frac{\nu}{2}}
\parallel& w^j\parallel_{L^{q_0}(\{(t,x):\frac{T}{2}\leq t\leq T, \delta\phi(T_0)\leq\phi(t)-|x|\leq2\delta\phi(T_0)\})} \\
&\leq C\delta_0^{\frac{1}{q_0}}\parallel F^j\parallel_{L^{\frac{q_0}{q_0-1}}(\mathbb{R}^{n+1}_+)}.
\end{split}
\end{equation}
We next start to prove \eqref{equ:4.38}. Set $G^j(t,x)=:T_0^2F^j(T_0t, T_0^{\frac{m+2}{2}}x)$ and
$v^j(t,x)=:w^j(T_0t, T_0^{\frac{m+2}{2}}x)$ for \(j=0,1\).
Then $v^j$ satisfies
\begin{equation}\label{equ:4.104}
\begin{cases}
&\partial_t^2 v^j-t^m\triangle v^j=G^j(t,x), \\
&v^j(0,x)=0,\quad \partial_tv^j(0,x)=0, \\
\end{cases}
\end{equation}
where $\operatorname{supp}G^j\subseteq\{(t,x):1\leq t\leq2,\delta_0\leq \phi(t)-|x|\leq2\delta_0\}$. Then, if we let $T$ denote
by $\frac{T}{T_0}$, then \eqref{equ:4.38} is a result of
\begin{equation}
\phi(T)^{\frac{1}{q_0}-\frac{\nu}{2}}\delta^{\frac{1}{q_0}+\frac{\nu}{2}}
\parallel v^j\parallel_{L^{q_0}(\{(t,x):\frac{T}{2}\leq t\leq T, \delta\leq\phi(t)-|x|\leq2\delta\})}
\leq C\delta_0^{\frac{1}{q_0}}\parallel G^j\parallel_{L^{\frac{q_0}{q_0-1}}(\mathbb{R}^{1+n}_+)}.
\label{equ:4.85}
\end{equation}

After this reduction, by \eqref{equ:4.30}, we have that \(T/T_0\geq \phi^{-1}\big(10\phi(2)\big)\)
holds for $(t,x)\in \operatorname{supp}w^0$ and \(T/2\leq t\leq T\), or equivalently, \(T\geq \phi^{-1}\big(10\phi(2)\big)\)
holds for $(t,x)\in \operatorname{supp}v^0$ and \(T/2\leq t\leq T\), which is called the relatively ``large time". On the other hand,
\(1\leq T/T_0\leq \phi^{-1}\big(10\phi(2)\big)\) holds  for $(t,x)\in \operatorname{supp}w^1$ and \(\f{T}{2}\leq t\leq T\),
or equivalently, \(1\leq T\leq \phi^{-1}\big(10\phi(2)\big)\) holds  for $(t,x)\in \operatorname{supp}v^1$ and
\(\f{T}{2}\leq t\leq T\), which is called
the relatively ``small time". In Subsection 4.1 and Subsection 4.2, we shall handle the two cases
respectively. For the concision of notation, in the following subsections, we will omit the superscript \(j\) and prove
\begin{equation}
\phi(T)^{\frac{1}{q_0}-\frac{\nu}{2}}\delta^{\frac{1}{q_0}+\frac{\nu}{2}}
\parallel v\parallel_{L^{q_0}(\{(t,x):\frac{T}{2}\leq t\leq T, \delta\leq\phi(t)-|x|\leq2\delta\})}
\leq C\delta_0^{\frac{1}{q_0}}\parallel G\parallel_{L^{\frac{q_0}{q_0-1}}(\mathbb{R}^{1+n}_+)}.
\label{equ:4.39}
\end{equation}

\subsection{Estimate for large time}\label{sec4:large}
We first deal with the case of ``relative large time" in \eqref{equ:4.39}, which means $\phi(T)\geq10\phi(2)$.

The proof of \eqref{equ:4.39} (corresponding to the estimate of $w^0$) can be divided into the following three parts according to
the different values of $\delta/\delta_0$:

Case (i) $\delta_0\leq\delta\leq40\cdot\frac{\phi(2)}{\phi(1)}\delta_0$;

Case (ii) $\delta\geq10\phi(2)$;

Case (iii) $40\cdot\frac{\phi(2)}{\phi(1)}\delta_0\leq\delta\leq10\phi(2), \delta_0<\f{1}{4}\phi(1)$.

\begin{remark}
We add the condition \(\delta_0<\f{1}{4}\phi(1)\) in Case (iii) for the reason
that \(40\cdot\frac{\phi(2)}{\phi(1)}\delta_0<10\phi(2)\) makes sense
only for \(\delta_0<\f{1}{4}\phi(1)\).
\end{remark}

Here we point out that for the wave equation and Case (i)- Case (ii), it is rather direct to
establish the analogous inequality \eqref{equ:4.39} (see (3.2) and $\S 3$ of \cite{Gls}). However, for the Tricomi equation,
due to the complexity of its
fundamental solution and degeneracy,
it needs more delicate and involved techniques from the knowledge of microlocal analysis to get the pointwise estimate
of $v$. For the proof of \eqref{equ:4.39} in case (iii), we shall utilize some basic ideas in \cite{Gls}.

\subsubsection{The proof of \eqref{equ:4.39} in Case (i)}\label{sec4:large:i}

Note that $\phi(T)>\phi(T_0)\geq1$ and $\delta\phi(T_0)\leq\phi(T)$. To prove \eqref{equ:4.39}, it suffices to show
\begin{equation}
\phi(T)^{\frac{1}{q_0}}\parallel v\parallel_{L^{q_0}(\{(t,x):\frac{T}{2}\leq t\leq T, \delta\leq\phi(t)-|x|\leq2\delta\})}\leq C\parallel G\parallel_{L^{\frac{q_0}{q_0-1}}(\mathbb{R}^{1+n}_+)}.
\label{equ:4.40}
\end{equation}
Use the method in Lemma 3.3 of \cite{HWYin}, if we write
\begin{equation}\label{equ:4.78}
v(t,x)=\int_0^tH(t,s,x)\md s,
\end{equation}
we then arrive at

\begin{claim}
\begin{equation}\label{equ:4.41}
\parallel H(t,s,\cdot)\parallel_{L^{q_0}(\mathbb{R}^n)}\leq C|t-s|^{-\frac{2}{q_0}(1+\frac{m}{4})}\parallel G(s,\cdot)\parallel_{L^{\frac{q_0}{q_0-1}}(\mathbb{R}^n)}.
\end{equation}
\end{claim}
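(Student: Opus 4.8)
The plan is to reduce the $L^{q_0}_x$-bound on the slice $H(t,s,\cdot)$ to a fixed-time $L^1\to L^\infty$ decay estimate for the Fourier integral operator representing the fundamental solution of $\p_t^2-t^m\Delta$, interpolated against the trivial $L^2\to L^2$ (energy) bound, exactly in the spirit of Lemma 3.3 of \cite{HWYin} but tracking the $t^m$-degeneracy. First I would recall the Duhamel-type representation behind \eqref{equ:4.78}: $H(t,s,x)$ is obtained by evolving the Cauchy data $(0, G(s,\cdot))$ of the homogeneous Tricomi equation from time $s$ to time $t$, so $H(t,s,\cdot) = W(t,s;D_x)G(s,\cdot)$, where the symbol of $W(t,s;D_x)$ is built from the functions $H_\pm$ appearing in \eqref{equ:2.3}--\eqref{equ:2.4}. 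Using the decay bounds \eqref{equ:2.5}--\eqref{equ:2.6} for $H_\pm$ together with the stationary phase analysis that produced the kernel estimate \eqref{equ:2.15} (now applied with $\phi(t)-\phi(s)$ in place of $\phi(t)$, since the phase is $(x-y)\cdot\xi \pm (\phi(t)-\phi(s))|\xi|$), one gets a dyadic kernel bound of the form
\begin{equation*}
|K_j(t,s,x;y)| \le C\lambda_j^{\frac{n+1}{2}}\big(1+(\phi(t)-\phi(s))\lambda_j\big)^{-\frac{m}{2(m+2)}}\big(\phi(t)-\phi(s)+\lambda_j^{-1}\big)^{-\frac{n-1}{2}}\big(1+\lambda_j\big||x-y|-(\phi(t)-\phi(s))\big|\big)^{-N},
\end{equation*}
which after summing the Littlewood--Paley pieces yields a dispersive $L^1_x\to L^\infty_x$ estimate for $W(t,s;D_x)$ with a gain of $(\phi(t)-\phi(s))^{-\frac{n-1}{2}-\frac{m}{2(m+2)}}$ modulo derivative losses.

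Next I would combine this dispersive estimate with the conservation-of-energy bound $\|W(t,s;D_x)g\|_{L^2_x}\lesssim \|g\|_{L^2_x}$ (valid because the Tricomi evolution is $L^2$-bounded on each time interval) and apply the standard $TT^*$ / Riesz--Thorin interpolation argument of Keel--Tao / Georgiev--Lindblad--Sogge type. Interpolating between the $L^1\to L^\infty$ decay with rate $\sim (\phi(t)-\phi(s))^{-\frac{n-1}{2}-\frac{m}{2(m+2)}}$ and the $L^2\to L^2$ bound, with $q_0 = \frac{2((m+2)n+2)}{(m+2)n-2}$ chosen to match the homogeneous dimension $N^* = \frac{(m+2)n}{2}$ of the operator, I expect the exponent to land exactly on $|t-s|^{-\frac{2}{q_0}(1+\frac{m}{4})}$: indeed, $\phi(t)-\phi(s)$ is comparable to $|t-s|\cdot\max(t,s)^{m/2}$ on the relevant range, and on the support of $G$ (where $s\sim T_0\sim 1$ after the rescaling \eqref{equ:4.104}) one has $\phi(t)-\phi(s)\sim t^{(m+2)/2}$, so that $(\phi(t)-\phi(s))^{-(\frac{n-1}{2}+\frac{m}{2(m+2)})\cdot\theta}$ with the interpolation parameter $\theta = 1-\frac{2}{q_0}$ converts, via $t^{(m+2)/2}\sim |t-s|$ for large $t$, into the stated power of $|t-s|$. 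I would verify the arithmetic $\big(\tfrac{n-1}{2}+\tfrac{m}{2(m+2)}\big)\cdot\tfrac{m+2}{2}\cdot\big(1-\tfrac{2}{q_0}\big) = \tfrac{2}{q_0}\big(1+\tfrac{m}{4}\big)$ directly from the definition of $q_0$.

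The main obstacle, as the authors themselves flag, is the degeneracy at $t=0$: the dispersive estimate above degenerates when $\phi(t)-\phi(s)$ is small, i.e. when $t$ and $s$ are close or both near zero, so one cannot simply quote the constant-coefficient wave result. Here I would split according to whether $(\phi(t)-\phi(s))\lambda_j \gtrless 1$ — the near-diagonal regime $(\phi(t)-\phi(s))\lambda_j \le 1$ is handled by Sobolev embedding / Bernstein rather than stationary phase, absorbing the frequency loss into the $W^{s,1}$-type norms and using that on $\operatorname{supp} G$ one has $s$ bounded away from $0$ after rescaling (so only $t$ can be large and $|t-s|\gtrsim 1$ throughout the region of interest in \eqref{equ:4.40}). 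The secondary technical point is keeping track of the factor $(1+\phi(t)|\xi|)^{-\frac{m}{2(m+2)}}$ from the amplitude \eqref{equ:2.11} through the interpolation; since this factor only helps (it provides extra decay), it can be bounded by $1$ in the worst direction and used to close the small-time case, but it must be retained to get the precise exponent $1+\frac{m}{4}$ rather than a lossy one. Once \eqref{equ:4.41} is in hand, integrating in $s$ via the Hardy--Littlewood--Sobolev inequality with the scaling $\delta\phi(T_0)\le\phi(t)-|x|\le 2\delta\phi(T_0)$ localizing $t$ will deliver \eqref{equ:4.40} and hence \eqref{equ:4.39} in Case (i).
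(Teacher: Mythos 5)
Your strategy (dyadic decomposition of the Tricomi propagator, stationary phase for the kernel, interpolation between $L^1\to L^\infty$ dispersive decay and $L^2\to L^2$) is the right framework and matches the paper's outline, but there is a concrete gap in the interpolation step that makes your claimed exponent $|t-s|^{-\frac{2}{q_0}(1+\frac{m}{4})}$ unattainable as you've set things up. You interpolate the $L^1\to L^\infty$ decay $(\phi(t)-\phi(s))^{-\frac{n-1}{2}-\frac{m}{2(m+2)}}$ against a ``trivial $L^2\to L^2$ energy bound.'' But with a constant-$1$ bound at the $L^2$ endpoint, the interpolation parameter $\theta=1-\frac{2}{q_0}=\frac{4}{(m+2)n+2}$ places all the decay on the $L^\infty$ side, and the arithmetic you propose to verify,
\begin{equation*}
\Big(\tfrac{n-1}{2}+\tfrac{m}{2(m+2)}\Big)\cdot\tfrac{m+2}{2}\cdot\Big(1-\tfrac{2}{q_0}\Big)=\tfrac{2}{q_0}\Big(1+\tfrac{m}{4}\Big),
\end{equation*}
is in fact false for $m\geq 1$: the left side simplifies to $\frac{(m+2)n-2}{(m+2)n+2}=\frac{2}{q_0}$, while the right side is $\frac{2}{q_0}\cdot\frac{m+4}{4}$. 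For instance with $m=1$, $n=3$ (so $q_0=22/7$) the left side is $7/11$ and the right side is $35/44$. Your scheme therefore produces $|t-s|^{-2/q_0}$ and misses the needed extra factor $|t-s|^{-\frac{m}{2q_0}}$.

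The missing decay comes precisely from the amplitude degeneracy factor $(1+\phi(t)|\xi|)^{-\frac{m}{2(m+2)}}$, which you flag but treat as harmless (``it can be bounded by $1$ in the worst direction''). The paper's proof does not discard it in the $L^2$ direction: after forming the rescaled dyadic pieces $\bar T_j G=\lambda_j^{\frac{2m}{(m+2)((m+2)n+2)}}H_j$, the $L^2\to L^2$ bound already carries a decay $\sim (1+\lambda_j\phi(t))^{-\frac{m}{2(m+2)}\cdot\frac{(m+2)n-2}{(m+2)n+2}}|t-s|^{-\frac{m}{(m+2)n+2}}$, and the $L^1\to L^\infty$ bound carries the complementary piece. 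The degeneracy factor must be split between the two endpoints in exactly these proportions so that, after interpolation, the total $|t-s|$ exponent sums to $-\frac{(m+4)((m+2)n-2)}{4((m+2)n+2)}=-\frac{2}{q_0}(1+\frac{m}{4})$. Using the genuine energy conservation for the Tricomi flow at the $L^2$ endpoint is therefore too lossy; the $L^2$ estimate must itself be a Plancherel-type bound on the Fourier integral operator with the degenerate amplitude retained, not a quote of $L^2$-boundedness of the evolution. Separately, the low-frequency part $j<0$ cannot be treated by the same dyadic summation (it does not sum geometrically); the paper collects all $j<0$ into a single operator and exploits the compact $y$-support of $G$ via Plancherel, which is a distinct ingredient your Bernstein/Sobolev remark does not clearly supply.
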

\begin{proof}[Proof of claim]
If we repeat the reduction of (3.23)-(3.24) in \cite{HWYin}, we then have $H=\ds\sum_{j=-\infty}^{\infty}H_j$, where
\begin{equation}\label{equ:4.79}
H_j=:T_jG(t,s,x)=\int_{\mathbb{R}^n}e^{i(x\cdot\xi+(\phi(t)-\phi(s))|\xi|)}
\beta(\frac{|\xi|}{2^j})a(t,s,\xi)\hat{G}(s,\xi) \md\xi,
\end{equation}
where the cut-off function \(\beta\) was defined in \eqref{equ:2.13} and the amplitude function $a$ satisfies
\begin{equation}\label{equ:4.80}
\big| \partial_\xi^\beta a(t,s,\xi)\big|\leq C\big(1+\phi(t)|\xi|\big)^{-\frac{m}{2(m+2)}}\big(1+\phi(s)|\xi|\big)^{-\frac{m}{2(m+2)}}|\xi|^{-\frac{2}{m+2}-|\beta|}.
\end{equation}
If we further set
\[\bar{T}_jG(t,s,x)=:\lambda_j^{\frac{2m}{(m+2)((m+2)n+2)}}H_j(t,s,x)\quad\text{with $\lambda_j=2^j$},\]
and  repeat the computation of (3.29) and (3.30) in \cite{HWYin}, we then obtain
\begin{equation*}
\begin{split}
\parallel \bar{T}_jG (t,s,\cdot)&\parallel_{L^2(\mathbb{R}^n)} \leq \lambda_j^{-\frac{2}{m+2}}C\big(1+\lambda_j\phi(t)\big)^{-\frac{m}{2(m+2)}\cdot\frac{(m+2)n-2}{(m+2)n+2}} \\
&\qquad\qquad\quad  \times |t-s|^{-\frac{m}{(m+2)n+2}}\parallel G(s,\cdot)\parallel_{L^2(\mathbb{R}^n)}
\end{split}
\end{equation*}
and
\begin{equation*}
\begin{split}
\qquad\qquad\quad\parallel \bar{T}_jG (t,s,\cdot)&\parallel_{L^\infty(\mathbb{R}^n)}\leq C\lambda_j^{\frac{n+1}{2}-\frac{2}{m+2}}\big(1+\lambda_j\phi(t)\big)^{-\frac{m}{2(m+2)}\cdot\frac{(m+2)n-2}{(m+2)n+2}} \\
&\qquad\qquad\quad  \times |t-s|^{-\frac{m}{(m+2)n+2}}|t-s|^{-\frac{n-1}{2}\cdot\frac{m+2}{2}}\parallel
G(s,\cdot)\parallel_{L^1(\mathbb{R}^n)}.
\end{split}
\end{equation*}
Therefore, if $j\geq0$, then
\begin{align*}
\parallel \bar{T}_jG (t,s,\cdot)\parallel_{L^2(\mathbb{R}^n)} &\leq C\lambda_j^{-\frac{2}{m+2}}\big(1+\phi(t)\big)^{-\frac{m}{2(m+2)}\cdot\frac{(m+2)n-2}{(m+2)n+2}}
|t-s|^{-\frac{m}{(m+2)n+2}}\parallel G(s,\cdot)\parallel_{L^2(\mathbb{R}^n)} \\
&\leq C\lambda_j^{-\frac{2}{m+2}}(t-s)^{-\frac{m}{4}\cdot\frac{(m+2)n-2}{(m+2)n+2}}|t-s|^{-\frac{m}{(m+2)n+2}}\parallel G(s,\cdot)\parallel_{L^2(\mathbb{R}^n)}
\end{align*}
and
\begin{align*}
\parallel &\bar{T}_jG (t,s,\cdot)\parallel_{L^\infty(\mathbb{R}^n)} \\
&\leq C\lambda_j^{\frac{n+1}{2}-\frac{2}{m+2}}(t-s)^{-\frac{m}{4}\cdot\frac{(m+2)n-2}{(m+2)n+2}}|t-s|^{-\frac{n-1}{2}\cdot\frac{m+2}{2}}
|t-s|^{-\frac{m}{(m+2)n+2}}\parallel G(s,\cdot)\parallel_{L^1(\mathbb{R}^n)}.
\end{align*}
Using the interpolation and direct computation, we have that for $j\geq0$,
\begin{equation}\label{equ:4.44}
\parallel H_j(t,s,\cdot)\parallel_{L^{q_0}(\mathbb{R}^n)}\leq C(t-s)^{-(\frac{m}{4}+1)\cdot\frac{2}{q_0}}\parallel G(s,\cdot)\parallel_{L^{\frac{q_0}{q_0-1}}(\mathbb{R}^n)}.
\end{equation}
For $j<0$, let
\[T_-G (t,s,x)\equiv\ds\sum_{j<0}H_j(t,s,x)=\int_{|\xi|\leq1}e^{i(x\cdot\xi+(\phi(t)-\phi(s))|\xi|)}a(t,s,\xi)\hat{G}(s,\xi) d\xi.\]
Then it follows from Plancherel's identity that
\begin{equation*}
\begin{split}
\parallel T_-G& (t,s,\cdot)\parallel_{L^2(\mathbb{R}^n)}
\leq C\Big(\int_{|\xi|\leq1}\left||\xi|^{-\f{2}{m+2}}\big(1+\phi(t)|\xi|\big)^{-\frac{m}{2(m+2)}}\hat{G}(s,\xi)\right|^2 \md\xi\Big)^{\frac{1}{2}}.
\end{split}
\end{equation*}
Note that
\[|\hat{G}(s,\xi)|=|\int_{\mathbb{R}^n}e^{-iy\cdot\xi}G(s,y)\md y|=|\int_{|y|\leq\phi(2)}e^{-iy\cdot\xi}G(s,y)\md y|\leq C\parallel G(s,\cdot)\parallel_{L^2(\mathbb{R}^n)},\]
and
\begin{equation*}
\begin{split}
&\Big(\int_{|\xi|\leq1}\big||\xi|^{-\f{2}{m+2}}(1+\phi(t)|\xi|)^{-\frac{m}{2(m+2)}}\big|^2 \md\xi\Big)^{\frac{1}{2}}\\
&=C\Big(\int_0^1\big|r^{-\f{2}{m+2}}(1+\phi(t)r)^{-\frac{m}{2(m+2)}}\big|^2 r^{n-1}\md r\Big)^{\frac{1}{2}} \\
&\le C\Big(\int_0^1r^{n-1-\frac{m+4}{m+2}}\phi(t)^{-\frac{m}{m+2}
}\md r\Big)^{\frac{1}{2}}\\
&\le C\phi(t)^{-\frac{m}{2(m+2)}},
\end{split}
\end{equation*}
here we have used the fact of $n-1-\frac{m+4}{m+2}\geq-\frac{2}{m+2}>-1$ for $n\geq3$ and $m\geq1$.
Thus,
\begin{equation*}
\parallel T_-G (t,s,\cdot)\parallel_{L^2(\mathbb{R}^n)}\leq C\phi(t)^{-\frac{m}{2(m+2)}}\parallel G(s,\cdot)\parallel_{L^2(\mathbb{R}^n)}
\leq C|t-s|^{-\frac{m}{4}}\parallel G(s,\cdot)\parallel_{L^2(\mathbb{R}^n)}.
\end{equation*}
Similarly, one has
\begin{equation*}
\parallel T_-G (t,s,\cdot)\parallel_{L^\infty(\mathbb{R}^n)}
\leq C|t-s|^{-\frac{m}{4}-\frac{n-1}{2}\cdot\frac{m+2}{2}}\parallel G(s,\cdot)\parallel_{L^1(\mathbb{R}^n)}.
\end{equation*}
Using interpolation again, we get
\begin{equation}\label{equ:4.48}
\parallel T_-G (t,s,\cdot)\parallel_{L^{q_0}(\mathbb{R}^n)}\leq C|t-s|^{-\frac{2}{q_0}(1+\frac{m}{4})}\parallel G(s,\cdot)\parallel_{L^{\frac{q_0}{q_0-1}}(\mathbb{R}^n)}.
\end{equation}
Then by Littlewood-Paley theory,  \eqref{equ:4.44} and \eqref{equ:4.48}, \eqref{equ:4.41}  in {\bf Claim 4.1}
is derived.
\end{proof}
With {\bf Claim 4.1}, noting that $t\lesssim t-s$ holds on the support of $G(s,x)$, then
we have
\begin{equation*}
\begin{split}
\parallel v\parallel_{L^{q_0}([\f{T}{2},T]\times\mathbb{R}^n)}\leq&\Big\|\int_0^t\parallel u(t,s,\cdot)\parallel_{L^{q_0}(\mathbb{R}^n)}\md s\Big\|_{L^{q_0}_t([\f{T}{2},T])} \\
\leq&C\Big\|\int_0^t|t-s|^{-\frac{2}{q_0}(1+\frac{m}{4})}\parallel G(s,\cdot)\parallel_{L^{\frac{q_0}{q_0-1}}_x}\md s\Big\|_{L^{q_0}_t} \\
\leq&C\Big\||t|^{-\frac{2}{q_0}(1+\frac{m}{4})}\int_1^2\parallel G(s,\cdot)\parallel_{L^{\frac{q_0}{q_0-1}}_x}\md s\Big\|_{L^{q_0}_t} \\
\leq&C\Big(\int_\frac{T}{2}^Tt^{(-\frac{2}{q_0}(1+\frac{m}{4}))q_0}\md t\Big)^{\frac{1}{q_0}}\parallel G\parallel_{L^{\frac{q_0}{q_0-1}}(\mathbb{R}^{n+1}_+)} \quad \text{(H\"{o}lder's inequality)}\\
\leq&CT^{-\frac{1}{q_0}\frac{m+2}{2}}\parallel G\parallel_{L^{\frac{q_0}{q_0-1}}(\mathbb{R}^{n+1}_+)} \\
\leq&C\phi(T)^{-\frac{1}{q_0}}\parallel G\parallel_{L^{\frac{q_0}{q_0-1}}(\mathbb{R}^{n+1}_+)},
\end{split}
\end{equation*}
which derives \eqref{equ:4.40}.

\subsubsection{The proof of \eqref{equ:4.39} in Case (ii)}\label{sec4:large:ii}

In this case, we have $\phi(t)-|x|\geq\delta\geq10\phi(2)$. As in \eqref{equ:4.78} and \eqref{equ:4.79}-\eqref{equ:4.80}, we can write
\[v=\sum_{j=-\infty}^{\infty}v_j=\sum_{j=-\infty}^{\infty}\int_0^t\int_{\Bbb R^n} K_j(t,x;s,y)G(s,y)dyds,\]
where
\begin{equation}
 K_j(t,x;s,y)=\int_{\mathbb{R}^n}e^{i((x-y)\cdot\xi+(\phi(t)-\phi(s))|\xi|)}
\beta(\frac{|\xi|}{2^j})a(t,s,\xi)\hat{G}(s,\xi) d\xi,
\label{equ:4.50}
\end{equation}
moreover, as in (3.41) of \cite{HWYin} for $\lambda_j=2^j$ and any $N\in\Bbb R^+$,
\begin{equation}
\begin{split}
|K_j(t,x;s,y)|\leq &C_N\lambda_j^{\frac{n+1}{2}-\frac{2}{m+2}}\big(|\phi(t)-\phi(s)|
+\lambda_j^{-1}\big)^{-\frac{n-1}{2}}\big(1+\phi(t)\lambda_j\big)^{-\frac{m}{2(m+2)}} \\
&\times \Big(1+\lambda_j\big||\phi(t)-\phi(s)|-|x-y|\big|\Big)^{-N}.
\end{split}
\label{equ:4.51}
\end{equation}
Denote $D_{s,y}=\{(s,y): 1\leq s\leq2, \phi(s)-2\delta_0\leq|y|\leq\phi(s)-\delta_0\}$. By H\"{o}lder's inequality
and the compact support property of $G(s,y)$ with respect to the variable $(s,y)$, we arrive at
\[|v_j|\leq\Big\| K_j(t,x;s,y)\big(\phi(t)^2-|x|^2\big)^{\frac{1}{q_0}}
\Big\|_{L^{q_0}(D_{s,y})}\Big\|\big(\phi(t)^2-|x|^2\big)^{-\frac{1}{q_0}}G(s,y)
\Big\|_{L^{\frac{q_0}{q_0-1}}(D_{s,y})}.
\]
In addition, applying the compact support property of $G(s,y)$, it is easy to check
\[\phi(t)-\phi(s)-|x-y|\geq C(\phi(t)-|x|), \quad \phi(t)-\phi(s)+|x-y|\sim\phi(t).\]
Based on this, let $N=\frac{n}{2}-\frac{1}{m+2}$ in \eqref{equ:4.51}, we then have
\begin{equation}\label{equ:4.52}
\begin{split}
&\Big\| K_j(t,x;s,y)\big(\phi(t)^2-|x|^2\big)^{\frac{1}{q_0}}\Big\|_{L^{q_0}(D_{s,y})} \\
&\leq C\bigg(\int_1^2\int_{\phi(s)-2\delta_0\leq|y|\leq\phi(s)-\delta_0}\{\lambda_j^{\frac{n+1}{2}-\frac{2}{m+2}}\big(|\phi(t)-\phi(s)|
+\lambda_j^{-1}\big)^{-\frac{n-1}{2}}(1+\phi(t)\lambda_j)^{-\frac{m}{2(m+2)}} \\
&\qquad \times \Big(1+\lambda_j\big||\phi(t)-\phi(s)|-|x-y|\big|\Big)^{\frac{1}{m+2}-\frac{n}{2}}\}^{q_0}
\phi(t)(\phi(t)-|x|)\md y\md s\bigg)^{\frac{1}{q_0}} \\
&\leq C\phi(t)^{-\frac{n-1}{2}+\frac{1}{q_0}-\frac{m}{2(m+2)}}\big(\phi(t)-|x|\big)^{\frac{1}{m+2}-\frac{n}{2}+\frac{1}{q_0}}
\Big(\int_1^2\int_{\phi(s)-2\delta_0\leq|y|\leq \phi(s)-\delta_0}\md y\md s\Big)^{\frac{1}{q_0}} \\
&\leq C\delta_0^{\frac{1}{q_0}}\phi(t)^{-\frac{n-1}{2}+\frac{1}{q_0}-\frac{m}{2(m+2)}}\big(\phi(t)-|x|\big)^{\frac{1}{m+2}-\frac{n}{2}+\frac{1}{q_0}}
\end{split}
\end{equation}
and
\begin{equation}\label{equ:4.53}
\begin{split}
&\Big(\int_1^2\int_{\phi(s)-2\delta_0\leq|y|\leq \phi(s)-\delta_0}\big\{(\phi(t)^2-|x|^2)^{-\frac{1}{q_0}}G(s,y)\big\}^{\frac{q_0}{q_0-1}}\md y\md s\Big)^{\frac{q_0-1}{q_0}} \\
&\leq C\big(\delta\phi(T_0)\phi(T)\big)^{-\frac{1}{q_0}}\parallel G\parallel_{L^{\frac{q_0}{q_0-1}}(\mathbb{R}^{1+n}_+)} \\
&\leq C\big(\delta\phi(T)\big)^{-\frac{1}{q_0}}\parallel G\parallel_{L^{\frac{q_0}{q_0-1}}(\mathbb{R}^{1+n}_+)}.
\end{split}
\end{equation}
On the other hand,
\begin{equation}
\begin{split}
&\Big\|\phi(t)^{-\frac{n-1}{2}+\frac{1}{q_0}-\frac{m}{2(m+2)}}\big(\phi(t)-|x|\big)^{\frac{1}{m+2}-\frac{n}{2}
+\frac{1}{q_0}}\Big\|_{L^{q_0}([\frac{T}{2}, T]\times\mathbb{R}^n)}\\
&=C_n\Big(\int_{\frac{T}{2}}^T\phi(t)^{-\frac{q_0}{2}(n-\frac{2}{m+2})+1}\int_0^{\phi(t)-10\phi(2)}
\big(\phi(t)-r\big)^{q_0(\frac{1}{m+2}-\frac{n}{2})+1}r^{n-1}\md r\md t\Big)^{\frac{1}{q_0}} \\
&\leq C\Big(\int_{\frac{T}{2}}^T\phi(t)^{-\frac{2}{m+2}}\md t\Big)^{\frac{1}{q_0}} \\
&\leq C.
\end{split}
\label{equ:4.54}
\end{equation}
Therefore, combining \eqref{equ:4.52}-\eqref{equ:4.54} yields
\newpage
\begin{align}\label{equ:4.103}
\parallel v_j\parallel_{L^{q_0}(\{(t,x):\frac{T}{2}\leq t\leq T, \delta\leq\phi(t)-|x|\leq2\delta\})}\leq&C\delta_0^{\frac{1}{q_0}}(\delta\phi(T))^{-\frac{1}{q_0}}\parallel G\parallel_{L^{\frac{q_0}{q_0-1}}(\mathbb{R}^{n+1}_+)} \no\\
=&C\delta_0^{\frac{1}{q_0}}\phi(T)^{-\frac{1}{q_0}+\frac{\nu}{2}}\delta^{-\frac{1}{q_0}
-\frac{\nu}{2}}\Big(\frac{\delta}{\phi(T)}\Big)^{\frac{\nu}{2}}
\parallel G\parallel_{L^{\frac{q_0}{q_0-1}}(\mathbb{R}^{n+1}_+)} \no\\
\leq&C\delta_0^{\frac{1}{q_0}}\phi(T)^{-\frac{1}{q_0}+\frac{\nu}{2}}\delta^{-\frac{1}{q_0}-\frac{\nu}{2}}
\parallel G\parallel_{L^{\frac{q_0}{q_0-1}}(\mathbb{R}^{n+1}_+)},
\end{align}
here we have used the fact of  $\delta\lesssim\phi(T)$ due to $2\delta\phi(T_0)\leq\phi(T)$ and $\phi(T_0)\geq1$. Then
\eqref{equ:4.103} together with Lemma \ref{lem:a5} yields estimate \eqref{equ:4.39} in Case (ii).

\subsubsection{The proof of \eqref{equ:4.39} in Case (iii)}\label{sec4:large:iii}

Motivated by the ideas in  $\S 3$ of \cite{Gls}, we shall separate
the related Fourier integral operator in the expression of $v$
into a high frequency part and a low frequency part, subsequently we
handle them with different techniques respectively.
First, note that the solution \(w\) of \eqref{equ:4.104} can be expressed as
\begin{equation*}
v(t,x)=\int_0^t\{V_2(t, D_x)V_1(s, D_x)-V_1(t, D_x)V_2(s, D_x)\}G(s,x)\md s,
\end{equation*}
where the definitions of \(V_j\) (\(j=1,2\) are given in \eqref{equ:2.3}-\eqref{equ:2.4}.
Then by the analogous analysis in Lemma 3.4 of \cite{HWYin}, we have
\begin{equation}
v(t,x)=\int_0^t\int_{\mathbb{R}^n}e^{i(x\cdot\xi+(\phi(t)-\phi(s))|\xi|)}a(t,s,\xi)\hat{G}(s,\xi)\md\xi \md s,
\label{equ:4.6}
\end{equation}
where the amplitude function $a$ satisfies for $\beta\in \Bbb N_0^n$,
\begin{equation}\label{equ:4.7}
\big| \partial_\xi^\beta a(t,s,\xi)\big|\leq C\big(1+\phi(t)|\xi|\big)^{-\frac{m}{2(m+2)}}\big(1+\phi(s)|\xi|\big)^{-\frac{m}{2(m+2)}}|\xi|^{-\frac{2}{m+2}-|\beta|}.
\end{equation}
To show \eqref{equ:4.39}, note that \(1\leq s\leq2\), we then introduce and deal with such an operator for $\mu\in(0,\f{m}{2(m+2)}]$,
\[v_{\mu}(t,x)=\int_0^t\int_{\Bbb R^n}\int_{\Bbb R^n} e^{i((x-y)\cdot\xi\pm(\phi(t)-\phi(s))|\xi|)}G(s,y)\big(1+\phi(t)|\xi|\big)^{-\frac{m}{2(m+2)}}
\frac{\md\xi}{|\xi|^{\frac{2}{m+2}+\mu}}\md y\md s.\]
Here we point out that the appearance of the factor $\f{1}{|\xi|^{\mu}}$ with $\mu>0$
in $v_{\mu}(t,x)$ will play an important role in
estimating $w(t,x)$ (one can see \eqref{equ:4.105} below).

Set $\tau=\phi(s)-|y|$. Applying H\"{o}lder's inequality, one then has that
\begin{equation}\label{equ:4.55}
\begin{split}
&|v_{\mu}|\le C\delta_0^{\frac{1}{q_0}}\Big(\int_{\delta_0}^{2\delta_0}\big|\int_{\Bbb R^n}\int_{\Bbb R^n} e^{i((x-y)\cdot\xi\pm(\phi(t)-\tau-|y|)|\xi|)}G\big(\phi^{-1}(\tau+|y|),y\big)\big(1+\phi(t)|\xi|\big)^{-\frac{m}{2(m+2)}} \\
&\qquad \times \frac{\md\xi}{|\xi|^{\frac{2}{m+2}+\mu}}\md y
\Big|^{\frac{q_0}{q_0-1}}\md\tau\Big)^{\frac{q_0-1}{q_0}}.
\end{split}
\end{equation}

To treat the integral in \eqref{equ:4.55}, it is enough to consider the phase function with sign minus
since for the case of sign plus the related analysis can be done in the same way. Note that we have
the assertions:
\begin{equation}\label{equ:4.3}
\delta_0\leq\frac{1}{4}\phi(1)
\end{equation}
and
\begin{equation}\label{equ:4.4}
\frac{1}{2}\phi(1)\leq|y|\leq\phi(2).
\end{equation}
Indeed, by $|y|\leq\phi(s)\leq\phi(2)$,
one then has $|y|\geq\phi(s)-2\delta_0\geq\frac{1}{2}\phi(1)$
if $\delta_0\leq\frac{1}{4}\phi(1)$. However, if  $\delta_0>\frac{1}{4}\phi(1)$, then
$10\phi(2)<40\cdot\frac{\phi(2)}{\phi(1)}\delta_0\leq\delta\leq10\phi(2)$ holds,
which yields a contradiction.

Next we start to estimate $\|v_{\mu}\|_{L_x^{q_0}}$. First note that by \eqref{equ:4.30},
\[\phi(t)\geq\phi(\frac{T}{2T_0})=\frac{\phi(T)}{\phi(2T_0)}\cdot\frac{2}{m+2}\geq10\frac{\phi(2)}{\phi(1)}\cdot\phi(1)=10\phi(2),\]
this together with $\tau<\phi(s)<\phi(2)$ yields $\phi(t)\geq\phi(t)-\tau>\frac{1}{2}\phi(t)$.
Thus we can replace $\phi(t)-\tau$ with $\phi(t)$ in \eqref{equ:4.55} and consider
\begin{equation}
\begin{split}
(T_{\mu}g)(t,x)=\int_{\mathbb{R}^n}\int_{\{y\in\mathbb{R}^n:\frac{1}{2}\phi(1)\leq|y|\leq\phi(2)\}}&e^{i[(x-y)\cdot\xi-(\phi(t)-|y|)|\xi|]} \\
&\times \big(1+\phi(t)|\xi|\big)^{-\frac{m}{2(m+2)}}g(y)\frac{\md\xi}{|\xi|^{\frac{2}{m+2}+\mu}}\md y,
\end{split}
\label{equ:H.1}
\end{equation}
where $\phi(t)\geq10\phi(2)-\phi(2)\geq9\phi(2)$ and $\delta<10\phi(2)$, then by Lemma \ref{lem:a2} in Appendix,
\eqref{equ:4.39} follows from
\begin{equation}
\parallel (T_{\mu}g)(t,\cdot)\parallel_{L^{q_0}(\{x:\delta\leq\phi(t)-|x|\leq2\delta\})}\leq C\phi(t)^{\frac{\nu}{2}-\frac{m+4}{q_0(m+2)}}\delta^{-\frac{\nu}{2}
-\frac{1}{q_0}}
\parallel g\parallel_{L^{\frac{q_0}{q_0-1}}(\mathbb{R}^n)}.
\label{equ:4.57}
\end{equation}

Next we focus on the proof of \eqref{equ:4.57}. We shall use the complex interpolation method to establish \eqref{equ:4.57}. To do this, set for \(z\in \mathbb{C}\)
\begin{equation}
\begin{split}
(T_zg)(t,x)=(z-\frac{(m+2)n+2}{2(m+2)})e^{z^2} \int_{\mathbb{R}^n}\int_{\mathbb{R}^n}
&e^{i((x-y)\cdot\xi-(\phi(t)-|y|)|\xi|)} \\
&\times\big(1+\phi(t)|\xi|\big)^{-\frac{m}{2(m+2)}}g(y)\frac{\md\xi}{|\xi|^z}\md y.
\end{split}
\label{equ:4.58}
\end{equation}
For clearer statement on \eqref{equ:4.57}, we shall replace $\frac{\nu}{2}$ by $\frac{\nu}{q_0}$ and rewrite
\eqref{equ:4.57} as
\begin{equation}
\begin{split}
\parallel (T_{\mu}g)(t,\cdot)\parallel_{L^{q_0}(\{x:\delta\leq\phi(t)-|x|\leq2\delta\})}\leq C\phi(t)^{\frac{\nu}{q_0}-\frac{m+4}{q_0(m+2)}}\delta^{-\frac{\nu}{q_0}
-\frac{1}{q_0}}\parallel g\parallel_{L^{\frac{q_0}{q_0-1}}(\mathbb{R}^n)}.
\end{split}
\label{equ:4.56}
\end{equation}
For some suitable scope of $\mu>0$, then \eqref{equ:4.56} would be a consequence of
\begin{equation}
\parallel (T_zg)(t,\cdot)\parallel_{L^\infty(\mathbb{R}^n)}\leq C\phi(t)^{-\frac{n-1}{2}-\frac{m}{2(m+2)}+\mu}\parallel g\parallel_{L^1(\mathbb{R}^n)} \quad\text{with $Rez=\frac{(m+2)n+2}{2(m+2)}+\mu$,}
\label{equ:4.59}
\end{equation}
and
\begin{equation}
\parallel (T_zg)(t,\cdot)\parallel_{L^2(\mathbb{R}^n)}\leq C\phi(t)^{-\frac{m}{2(m+2)}}
(\phi(t)^{\nu}\delta^{-(\nu+1)})^{\frac{1}{m+2}}\parallel g\parallel_{L^2(\mathbb{R}^n)} \quad\text{with $Rez=0$.}
\label{equ:4.60}
\end{equation}
In fact, the interpolation between \eqref{equ:4.60} and \eqref{equ:4.59} gives
\begin{equation*}
  \parallel (T_{\mu}g)(t,\cdot)\parallel_{L^{q_0}(\{x:\delta\leq\phi(t)-|x|\leq2\delta\})}\leq C\phi(t)^{\frac{2\nu}{q_0(m+2)}+\frac{4\mu}{(m+2)n+2}-\frac{m+4}{q_0(m+2)}}\delta^{-\frac{2(\nu+1)}{(m+2)q_0}
}\parallel g\parallel_{L^{\frac{q_0}{q_0-1}}(\mathbb{R}^n)}.
\end{equation*}
By $\delta\leq10\phi(2)$, it suffices  to derive \eqref{equ:4.56}
only under the condition of $\frac{2\nu}{q_0(m+2)}+\frac{4\mu}{(m+2)n+2}\leq\frac{\nu}{q_0}$.
Thus, if $0<\mu\leq\min\big(\frac{m}{2(m+2)}, \frac{m((m+2)n+2)}{4(m+2)q_0}\nu\big)$,
then \eqref{equ:4.56} can be proved.

We now prove \eqref{equ:4.59} by the stationary phase method.
To this end, for the cut-off function $\beta$ in \eqref{equ:2.13}, $\lambda_j=2^j$ with $j\geq0$, and
$z=\frac{(m+2)n+2}{2(m+2)}+\mu+i\theta$ with $\theta\in\Bbb R$, we define and estimate the dyadic operator $T_z^jg$ as follows

\begin{equation}
\begin{split}
|T_z^jg|&=\Big| \theta e^{-\theta^2}\int_{\mathbb{R}^n}\int_{\mathbb{R}^n}e^{i[(x-y)\cdot\xi-(\phi(t)-|y|)|\xi|]} \beta(\f{|\xi|}{\lambda_j})\big(1+\phi(t)|\xi|\big)^{-\frac{m}{2(m+2)}}g(y)\frac{\md\xi}{|\xi|^z}\md y \Big| \\
&\leq C\lambda_j^{\frac{n+1}{2}}\big(\phi(t)-\phi(2)\big)^{-\frac{n-1}{2}}\big(\lambda_j\phi(t)\big)^{-\frac{m}{2(m+2)}}
\lambda_j^{-\frac{(m+2)n+2}{2(m+2)}-\mu}\parallel g\parallel_{L^1(\mathbb{R}^n)} \\
&\leq C\lambda_j^{-\mu}\phi(t)^{-\frac{n-1}{2}-\frac{m}{2(m+2)}}
\parallel g\parallel_{L^1(\mathbb{R}^n)}.
\end{split}
\label{equ:4.62}
\end{equation}
Summing up \eqref{equ:4.62} with respect to $j\ge 0$ yields
\begin{equation}\label{equ:4.105}
\bigg\|\sum_{j=0}^\infty T_z^jg\bigg\|_{L^\infty(\mathbb{R}^n)}\leq C
\phi(t)^{-\frac{n-1}{2}-\frac{m}{2(m+2)}}\parallel g\parallel_{L^1(\mathbb{R}^n)}.
\end{equation}
On the other hand, for $|\xi|\leq1$ we have that
\begin{equation}\label{equ:4.64}
\begin{split}
&\Big| \theta e^{-\theta^2}\int_{|\xi|\leq1}\int_{\mathbb{R}^n}e^{i[(x-y)\cdot\xi-(\phi(t)-|y|)|\xi|]} \big(1+\phi(t)|\xi|\big)^{-\frac{m}{2(m+2)}}g(y)\frac{\md\xi}{|\xi|^z}\md y \Big| \\
&\leq C\parallel g\parallel_{L^1(\mathbb{R}^n)} \int_{|\xi|\leq1}\big(1+\phi(t)|\xi|\big)^{-\frac{n-1}{2}-\frac{m}{2(m+2)}}
|\xi|^{-\frac{(m+2)n+2}{2(m+2)}-\mu}\md\xi\\
&\leq C\parallel g\parallel_{L^1(\mathbb{R}^n)} \int_0^1(1+\phi(t)r)^{-\frac{n-1}{2}-\frac{m}{2(m+2)}}
r^{-\frac{n}{2}-\frac{1}{m+2}-\mu}r^{n-1}\md r,
\end{split}
\end{equation}
here we have noted the fact of $n-1-\frac{n}{2}-\frac{1}{m+2}-\mu=\frac{n}{2}-1-\frac{1}{m+2}-\mu>-1$ for $0<\mu\leq\frac{m}{2(m+2)}$, $m\geq1$ and
$n\geq3$, thus the integral in last line of \eqref{equ:4.64} is convergent. In order to give a precise estimate
to \eqref{equ:4.64}, setting $\rho=(1+\phi(t)r)^{\frac{1}{2(m+2)}}$, then

\begin{equation}\label{equ:4.65}
\begin{split}
&\int_0^1(1+\phi(t)r)^{-\frac{n-1}{2}-\frac{m}{2(m+2)}}
r^{-\frac{n}{2}-\frac{1}{m+2}-\mu}r^{n-1}\md r \\
&=\int_1^{(1+\phi(t))^{\frac{1}{2(m+2)}}}\rho^{2-(m+2)n}\Big(\frac{\rho^{2(m+2)}-1}{\phi(t)}\Big)^{\frac{n}{2}-1-\frac{1}{m+2}-\mu}
\md\Big(\frac{\rho^{2(m+2)}-1}{\phi(t)}\Big)\\
&\leq C\phi(t)^{-\frac{n}{2}+\frac{1}{m+2}+\mu}\int_1^{(1+\phi(t))^{\frac{1}{2(m+2)}}}
\frac{\rho^{(m+2)n-2(m+3)-2(m+2)\mu}\rho^{2(m+2)-1}}{\rho^{(m+2)n-2}}\md\rho \\
&=C\phi(t)^{-\frac{n}{2}+\frac{1}{m+2}+\mu}\int_1^{(1+\phi(t))^{\frac{1}{2(m+2)}}}\rho^{-(m+2)\mu-1}\md\rho \\
&\leq C_\mu\phi(t)^{-\frac{n-1}{2}-\frac{m}{2(m+2)}+\mu}.
\end{split}
\end{equation}
Thus combining\eqref{equ:4.64} and \eqref{equ:4.65} yields \eqref{equ:4.59} with a positive constant $C$ depends
only on $m$, $n$ and $\mu$.

To get \eqref{equ:4.60}, the small frequencies and large frequencies will be treated separately
in the Fourier integral operator of \eqref{equ:4.58}.
As in \cite{Gls}, we shall use Sobolev trace theorem to handle the small frequency part.
More specifically, we first introduce a function $\rho\in C^\infty(\mathbb{R}^n)$ such that
\begin{equation*}
\rho(\xi)=
\left\{ \enspace
\begin{aligned}
1, \quad &|\xi|\geq2, \\
0, \quad &|\xi|\leq1.
\end{aligned}
\right.
\end{equation*}
For $\alpha=1+\nu$, let
$$(T_zg)(t,x)=(R_zg)(t,x)+(S_zg)(t,x),$$
where
\begin{align}\label{equ:4.115}
(R_zg)(t,x)=&\Big(z-\frac{(m+2)n+2}{2(m+2)}\Big)e^{z^2}
\int_{\mathbb{R}^n}\int_{\frac{1}{2}\phi(1)\leq|y|\leq\phi(2)}e^{i((x-y)\cdot\xi-(\phi(t)-|y|)|\xi|)}\no\\
&\qquad\qquad \qquad \qquad\times\big(1+\phi(t)|\xi|\big)^{-\frac{m}{2(m+2)}}\Big(1-\rho\big(\phi(t)^{1-\alpha}\delta^\alpha\xi\big)\Big)
g(y)\frac{\md\xi}{|\xi|^z}\md y,\no\\
(S_zg)(t,x)=&\Big(z-\frac{(m+2)n+2}{2(m+2)}\Big)e^{z^2}
\int_{\mathbb{R}^n}\int_{\frac{1}{2}\phi(1)\leq|y|\leq\phi(2)}e^{i((x-y)\cdot\xi-(\phi(t)-|y|)|\xi|)} \no\\
&\qquad\qquad \qquad \qquad\times\big(1+\phi(t)|\xi|\big)^{-\frac{m}{2(m+2)}}\rho\big(\phi(t)^{1-\alpha}\delta^\alpha\xi\big)g(y)\frac{\md\xi}{|\xi|^z}\md y.
\end{align}
It follows from Lemma \ref{lem:a3} and Lemma \ref{lem:a4} in Appendix that

\begin{equation}
\parallel (R_zg)(t,\cdot)\parallel_{L^2(\mathbb{R}^n)}\leq C\phi(t)^{-\frac{m}{2(m+2)}}(\phi(t)^{\alpha-1}\delta^{-\alpha})^{\frac{1}{m+2}}\parallel g\parallel_{L^2(\mathbb{R}^n)}
\quad \text{for $Rez=0$},
\label{equ:4.66}
\end{equation}
and
\begin{equation}
\begin{split}
\parallel S_zg(t,\cdot)&\parallel_{L^2(\{x:\delta\leq\phi(t)-|x|\leq2\delta\})}\leq C\delta^{-\frac{1}{2}}(\phi(t)^\alpha\delta^{-\alpha})^{-\frac{m}{2(m+2)}}\parallel g\parallel_{L^2(\mathbb{R}^n)}\\
=&C\phi(t)^{-\frac{m}{2(m+2)}}(\phi(t)^{\alpha-1}\delta^{-\alpha})^{\frac{1}{m+2}}\Big(\frac{\delta}{\phi(t)}\Big)^{\frac{\alpha-1}{2}}
\parallel g\parallel_{L^2(\mathbb{R}^n)} \qquad \text{for $Rez=0$}.
\end{split}
\label{equ:4.67}
\end{equation}
Note that \eqref{equ:4.67} is actually stronger than \eqref{equ:4.60} and \eqref{equ:4.66} by our assumptions
of $\delta\leq\delta\phi(T_0)\leq\phi(T)$ and $\frac{T}{2}\leq t\leq T$. \eqref{equ:4.66} together with \eqref{equ:4.67}
yields \eqref{equ:4.60}.

Collecting all the analysis above in Case (i)- Case (iii), \eqref{equ:4.39} is proved
in the case of large time part.

\subsection{Estimate for small time}
Now we turn to the estimate of \(w^1\), which corresponds to the small time
part of $w$ in \eqref{equ:4.1}.
Similar to the treatment on \(w^0\) in the large time part of $w$, we shall prove
\eqref{equ:4.39}.

As in the Subsection 4.1, we will divide the proof of \eqref{equ:4.39} into the following two parts according to
the different values of $\frac{\delta}{\delta_0}$:

(i) $\delta_0\leq\delta\leq40\cdot\frac{\phi(2)}{\phi(1)}\delta_0$;

(ii) $40\cdot\frac{\phi(2)}{\phi(1)}\delta_0\leq\delta\leq\phi(T)$.

\subsubsection{The proof of \eqref{equ:4.39} in Case (i)}

In this case, one has that $\phi(T)>\phi(T_0)\geq1$ and $\delta\phi(T_0)\leq\phi(T)$. To prove \eqref{equ:4.39}, it suffices to show
\begin{equation}\label{D1}
\phi(T)^{\frac{1}{q_0}}\parallel v\parallel_{L^{q_0}(\{(t,x):\frac{T}{2}\leq t\leq T, \delta\leq\phi(t)-|x|\leq2\delta\})}\leq C\parallel G\parallel_{L^{\frac{q_0}{q_0-1}}(\mathbb{R}^{1+n}_+)}.
\end{equation}
By $\phi(1)\leq\phi(T)\leq10\phi(2)$, we only need to prove
\begin{equation}\label{D2}
\parallel v\parallel_{L^{q_0}(\{(t,x):\frac{T}{2}\leq t\leq T, \delta\leq\phi(t)-|x|\leq2\delta\})}\leq C\parallel G\parallel_{L^{\frac{q_0}{q_0-1}}(\mathbb{R}^{1+n}_+)}.
\end{equation}
But this just only follows from the un-weighted Strichartz estimate in  \cite{HWYin}
(see Lemma 3.4 of \cite{HWYin}).

\subsubsection{The proof of \eqref{equ:4.39} in Case (ii)}

In this case, we only need to prove
\begin{equation}
\delta^{\frac{1}{q_0}+\frac{\nu}{2}}\parallel v\parallel_{L^{q_0}(\{(t,x):\frac{T}{2}\leq t\leq T, \delta\leq\phi(t)-|x|\leq2\delta\})}\leq C\phi(T)^{\frac{\nu}{2}}\delta_0^{\frac{1}{q_0}}\parallel G\parallel_{L^{\frac{q_0}{q_0-1}}(\mathbb{R}^{1+n}_+)}.
\label{equ:4.68}
\end{equation}
By \eqref{equ:4.6}-\eqref{equ:4.7}, we can write
\begin{equation}\label{D4}
\begin{split}
v=&\int_0^t\int_{\Bbb R^n}\int_{\Bbb R^n} e^{i((x-y)\cdot\xi\pm(\phi(t)-\phi(s))|\xi|)}G(s,y) \\ &\times\big(1+\phi(t)|\xi|\big)^{-\frac{m}{2(m+2)}}\big(1+\phi(s)|\xi|\big)^{-\frac{m}{2(m+2)}}\frac{\md\xi}{|\xi|^{\frac{2}{m+2}}}\md y\md s.
\end{split}
\end{equation}
Set \(\tau=\phi(s)-|y|\). Then we have
\begin{equation}\label{equ:4.70}
\begin{split}
|v|\le &C\delta_0^{\frac{1}{q_0}}\Big(\int_{\delta_0}^{2\delta_0}\Big|\int_{\Bbb R^n}\int_{\Bbb R^n} e^{i((x-y)\cdot\xi\pm(\phi(t)-\tau-|y|)|\xi|)}G(\phi^{-1}\big(\tau+|y|\big),y) \\
&\times\big(1+\phi(t)|\xi|\big)^{-\frac{m}{2(m+2)}}\big(1+\phi(s)|\xi|\big)^{-\frac{m}{2(m+2)}}\frac{\md\xi}{|\xi|^{\frac{2}{m+2}}}\md y
\Big|^{\frac{q_0}{q_0-1}}d\tau\Big)^{\frac{q_0-1}{q_0}}.
\end{split}
\end{equation}
As in \eqref{equ:4.3} and \eqref{equ:4.4}, one has \(\delta_0\leq\frac{1}{4}\phi(1)\)
and \(\frac{1}{2}\phi(1)\leq|y|\leq\phi(2)\). Then
\[\tau=\phi(s)-|y|\leq\phi(s)-\frac{1}{2}\phi(1).\]
This, together with \(t\geq s\), yields
\[\f{\phi(t)-\tau}{\phi(t)}\geq\frac{\phi(t)-\phi(s)+\phi(1)/2}{\phi(t)}\geq\frac{\phi(1)}{2\phi(t)}\geq\frac{\phi(1)}{20\phi(2)}>0.\]
In addition, \(\phi(t)-\tau\leq\phi(t)\) holds. Thus, we can replace \(\phi(t)-\tau\) with \(\phi(t)\) in \eqref{equ:4.70} and consider
an operator as follows
\begin{equation*}
\begin{split}
(\t Tg)(t,x)=\int_{\mathbb{R}^n}\int_{D_y}e^{i((x-y)\cdot\xi-(\phi(t)-|y|)|\xi|)} \big(1+\phi(t)|\xi|\big)^{-\frac{m}{2(m+2)}}\big(1+\phi(s)|\xi|\big)^{-\frac{m}{2(m+2)}}g(y)\frac{\md\xi}{|\xi|^{\frac{2}{m+2}}}\md y,
\end{split}
\end{equation*}
where \(D_y=\{y\in \mathbb{R}^n: \frac{1}{2}\phi(1)\leq|y|\leq\phi(2)\}\).
By a computation similar to Lemma A.2, we know that \eqref{equ:4.39} follows from
\begin{equation*}
\parallel (\tilde{T}g)(t,\cdot)\parallel_{L^{q_0}(\{x:\delta\leq\phi(t)-|x|\leq2\delta\})}\leq C\phi(t)^{\frac{\nu}{2}}\delta^{-\frac{\nu}{2}
-\frac{1}{q_0}}
\parallel g\parallel_{L^{\frac{q_0}{q_0-1}}(\mathbb{R}^n)}.
\end{equation*}
As in \eqref{equ:4.56}, we shall replace \(\f{\nu}{2}\) with \(\f{\nu}{q_0}\) and prove
\begin{equation}
\parallel (\tilde{T}g)(t,\cdot)\parallel_{L^{q_0}(\{x:\delta\leq\phi(t)-|x|\leq2\delta\})}\leq C\phi(t)^{\frac{\nu}{q_0}}\delta^{-\frac{\nu}{q_0}
-\frac{1}{q_0}}
\parallel g\parallel_{L^{\frac{q_0}{q_0-1}}(\mathbb{R}^n)}.
\label{equ:4.72}
\end{equation}
To deal with \(\tilde{T}g\), we write
\begin{equation}\label{D3}
\begin{split}
&\tilde{T}g \\
&=\int_{|\xi|\leq1}\int_{D_y}e^{i((x-y)\cdot\xi-(\phi(t)-|y|)|\xi|)}
\big(1+\phi(t)|\xi|\big)^{-\frac{m}{2(m+2)}}\big(1+\phi(s)|\xi|\big)^{-\frac{m}{2(m+2)}}\frac{\md\xi}{|\xi|^{\frac{2}{m+2}}}g(y)\md y\\
&\quad +\int_{|\xi|>1}\int_{D_y}e^{i((x-y)\cdot\xi-(\phi(t)-|y|)|\xi|)} \big(1+\phi(t)|\xi|\big)^{-\frac{m}{2(m+2)}}\big(1+\phi(s)|\xi|\big)^{-\frac{m}{2(m+2)}}\frac{\md\xi}{|\xi|^{\frac{2}{m+2}}}g(y)\md y\\
& =:\t T^1g+\tilde{T}^2g.
\end{split}
\end{equation}
By the uniform boundedness of $\phi(t)$ and $\phi(s)$ from below and above, we may consider
\begin{equation*}
  T^2g =\int_{|\xi|>1}\int_{\{y\in\mathbb{R}^n:\frac{1}{2}\phi(1)\leq|y|\leq\phi(2)\}}e^{i((x-y)\cdot\xi-(\phi(t)-|y|)|\xi|)} g(y)\frac{\md\xi}{|\xi|}\md y
\end{equation*}
instead of $\tilde{T}^2g$ in \eqref{D3}.
Next we prove \eqref{equ:4.72} for \(\t T^1 g\) and \(T^2 g\). As in the proof of previous ``large time" part, we shall use the
complex interpolation method to establish \eqref{equ:4.72}. More specifically, we set that for $z\in\Bbb C$,
\begin{align*}
T^1_zg &=\int_{|\xi|\leq1}\int_{\{y\in\mathbb{R}^n:\frac{1}{2}\phi(1)\leq|y|\leq\phi(2)\}}e^{i[(x-y)\cdot\xi-(\phi(t)-|y|)|\xi|]} \\
&\qquad \times \big(1+\phi(t)|\xi|\big)^{-\frac{m}{2(m+2)}}\big(1+\phi(s)|\xi|\big)^{-\frac{m}{2(m+2)}}\frac{\md\xi}{|\xi|^{z}}g(y)\md y
\end{align*}
and
\begin{equation*}
  T^2_z g =\int_{|\xi|>1}\int_{\{y\in\mathbb{R}^n:\frac{1}{2}\phi(1)\leq|y|\leq\phi(2)\}}e^{i[(x-y)\cdot\xi-(\phi(t)-|y|)|\xi|]} g(y)\frac{\md\xi}{|\xi|^z}\md y.
\end{equation*}
We now prove the following inequalities
\begin{equation}\label{equ:4.74}
  \|T^1_zg(t,\cdot)\|_{L^\infty(\R^n)}\leq\|g\|_{L^1(\R^n)}, \quad \operatorname{Re}z=\f{(m+2)n+2}{2(m+2)};
\end{equation}
\begin{equation}\label{equ:4.75}
  \|T^1_zg(t,\cdot)\|_{L^2(\R^n)}\leq(\phi(t)^{\nu}\delta^{-(\nu+1)})^{\frac{1}{m+2}}\|g\|_{L^2(\R^n)}, \quad \operatorname{Re}z=0;
\end{equation}
\begin{equation}\label{equ:4.76}
  \|T^2_zg(t,\cdot)\|_{L^\infty(\R^n)}\leq\|g\|_{L^1(\R^n)}, \quad \operatorname{Re}z=\f{(m+2)n+2}{4};
\end{equation}
\begin{equation}\label{equ:4.77}
  \|T^2_zg(t,\cdot)\|_{L^2(\R^n)}\leq(\phi(t)^{\nu}\delta^{-(\nu+1)})^{\frac{1}{m+2}}\|g\|_{L^2(\R^n)}, \quad \operatorname{Re}z=0.
\end{equation}
From \eqref{equ:4.74}-\eqref{equ:4.77}, by the interpolation among the \(L^1-L^\infty\) estimates  (\eqref{equ:4.74} and \eqref{equ:4.76})
and the \(L^2-L^2\) estimates (\eqref{equ:4.75} and \eqref{equ:4.77}), we get that for $2\le q_0\le \infty$ and \(j=1,2\),
\begin{equation}\label{equ:4.113}
\begin{split}
\|T^j_zg(t,\cdot)\|_{L^{q_0}(\R^n)} & \leq C\big(\phi(t)^{\nu}\delta^{-(\nu+1)}\big)^{\frac{2}{(m+2)q_0}}\|g\|_{L^{\frac{q_0}{q_0-1}}(\R^n)} \\
&=C\phi(t)^{-\frac{\nu}{q_0}\frac{m}{m+2}}
\delta^{\frac{\nu+1}{q_0}\frac{m}{m+2}}\phi(t)^{\frac{\nu}{q_0}}\delta^{-\frac{\nu+1}{q_0}}\|g\|_{L^{\frac{q_0}{q_0-1}}(\R^n)} \\
&\leq C\phi(t)^{\frac{\nu}{q_0}}\delta^{-\frac{\nu+1}{q_0}}\|g\|_{L^{\frac{q_0}{q_0-1}}(\R^n)},
\end{split}
\end{equation}
here we have used the facts of  \(\phi(t)\geq\phi(1)\) and \(\delta\leq10\phi(2)\)
in the last inequality. From \eqref {equ:4.113}, \eqref{equ:4.72} is then proved.

Next we start to establish \eqref{equ:4.74}-\eqref{equ:4.77}.

\paragraph{\(\mathbf{L^1-L^\infty}\) estimates:}

At first, we derive the \(L^1-L^\infty\) estimate for \(T^1_zg\). It follows from direct computation that
\begin{equation*}
\begin{split}
|T^1_zg| & \leq\bigg|\int_{|\xi|\leq1}|\xi|^{-\frac{(m+2)n+2}{2(m+2)}}\md\xi\bigg|\|g\|_{L^1(\R^n)} \\
& \leq C\bigg|\int_{r\leq1}r^{-\f{n}{2}-\f{1}{m+2}}r^{n-1}\md r\bigg|\|g\|_{L^1(\R^n)} \\
& \leq C\|g\|_{L^1(\R^n)}, \quad \operatorname{Re}z=\f{(m+2)n+2}{2(m+2)}.
\end{split}
\end{equation*}
Thus \eqref{equ:4.74} is established. To study \(T^2_zg\), we shall introduce and estimate the corresponding
dyadic operators as in \eqref{equ:4.79}. Let \(\operatorname{Re}z=\f{(m+2)n+2}{4}\)
and \(\operatorname{Im}z=\theta\), \(\lambda_j=2^j\), then for \(j\geq0\),
\begin{equation*}
\begin{split}
|T^{2,j}_zg| & =\bigg|\theta e^{-\theta^2}\int_{\R^n}\int_{|\xi|\geq1}e^{i[(x-y)\cdot\xi-(\phi(t)-|y|)|\xi|]}
\beta\big(\f{|\xi|}{\lambda_j}\big)g(y)
\f{\md\xi}{|\xi|^z}\md y\bigg| \\
&\leq C\lambda_j^n \lambda_j^{-\f{(m+2)n+2}{4}}\|g\|_{L^1(\R^n)}.
\end{split}
\end{equation*}
If \(m\geq2\), then
\[|T^{2,j}_zg|\leq C\lambda_j^{-\f{1}{2}}\|g\|_{L^1(\R^n)}.\]
Summing up $T^{2,j}_zg$ with respect to \(j\) yields \eqref{equ:4.76}.

For the case of \(m=1\), we require a more careful analysis on $T^{2,j}_zg$
by applying the stationary phase method. For this purpose,
choosing a \(\kappa>0\) such that \(-\f{n}{4}+\f{1}{2}-\kappa>-\f{n-1}{2}\), then it follows
from the stationary phase method that for $\operatorname{Re}z=\f{3n+2}{4}$,
\begin{equation}\label{equ:4.116}
  \begin{split}
     \bigg|\int_{|\xi|\geq1}&e^{i[(x-y)\cdot\xi-(\phi(t)-|y|)|\xi|]}\beta\big(\f{|\xi|}{\lambda_j}\big)\f{\md\xi}{|\xi|^z}\bigg| \\
       &\leq C\lambda_j^n \lambda_j^{-\f{3n+2}{4}}\Big(1+\big(\phi(t)-|y|\big)\lambda_j\Big)^{-\f{n}{4}+\f{1}{2}-\kappa}.
  \end{split}
\end{equation}
Note that as in \eqref{equ:H.1}, the factor \(\phi(t)\) in \eqref{equ:4.116} is
actually \(\phi(t)-\tau\), then
\[\phi(t)-|y|=\phi(t)-\tau-|y|=\phi(t)-\phi(s).\]
Recall that by our assumption, \(\operatorname{supp}v\subseteq\{(t,x): \delta\leq\phi(t)-|x|\leq2\delta,T/2\leq t\leq T\}\) and
\(\operatorname{supp}G\subseteq\{(s,y): \delta_0\leq\phi(s)-|y|\leq2\delta_0,1\leq s\leq 2\}\).
Together with \(40\cdot\frac{\phi(2)}{\phi(1)}\delta_0\leq\delta\), this yields \(\phi(t)-\phi(s)\geq C\delta\). Then
\begin{equation}\label{equ:4.107}
\begin{split}
\bigg|\int_{\R^n}\int_{|\xi|\geq1}&e^{i[(x-y)\cdot\xi-(\phi(t)-|y|)|\xi|]}
\beta\big(\f{|\xi|}{\lambda_j}\big)g(y)
\f{\md\xi}{|\xi|^z}\md y\bigg|\\
&\leq C\lambda_j^n \lambda_j^{-\f{3n+2}{4}}\big(\delta\lambda_j\big)^{-\f{n}{4}+\f{1}{2}-\kappa}\|g\|_{L^1(\R^n)}\\
&\leq C\lambda_j^{-\kappa}\delta^{-\f{n}{4}+\f{1}{2}-\kappa}\|g\|_{L^1(\R^n)}.
\end{split}
\end{equation}
Subsequently,
\begin{equation}\label{equ:4.108}
  |T^{2,j}_zg|\leq C\lambda_j^{-\kappa}\delta^{-\f{n}{4}+\f{1}{2}-\kappa}\|g\|_{L^1(\R^n)} \qquad
  \text{with $\operatorname{Re}z=\f{3n+2}{4}$}.
\end{equation}
Summering up all the estimates on $j\ge 0$ in \eqref{equ:4.108}, we get
\begin{equation}\label{equ:4.109}
|T^{2}_zg|\leq C\delta^{-\f{n}{4}+\f{1}{2}-\kappa}\|g\|_{L^1(\R^n)} \qquad \text{with
$\operatorname{Re}z=\f{3n+2}{4}$.}
\end{equation}
In order to apply \eqref{equ:4.109} to establish \eqref{equ:4.72}, we shall need a
new version of \eqref{equ:4.77}. More specifically, for certain \(\gamma\in\R\), we intend to show
\begin{equation}\label{equ:4.110}
\|T^2_zg(t,\cdot)\|_{L^2(\R^n)}\leq(\phi(t)^{\nu}\delta^{-\gamma})^{\frac{1}{3}}\|g\|_{L^2(\R^n)} \qquad
\text{with $\operatorname{Re}z=0$.}
\end{equation}
If this is done, by interpolating \eqref{equ:4.110} with \eqref{equ:4.109}, we then have
\begin{equation}\label{equ:4.111}
\begin{split}
\|T^j_zg(t,\cdot)\|_{L^{q_0}(\R^n)} & \le C\phi(t)^{-\frac{\nu}{3q_0}}
\phi(t)^{\frac{\nu}{q_0}}\delta^{-\frac{\gamma}{3}\frac{2}{q_0}+\big(
-\f{n}{4}+\f{1}{2}-\kappa\big)\big(1-\frac{2}{q_0}\big)}\|g\|_{L^{\frac{q_0}{q_0-1}}(\R^n)}.
\end{split}
\end{equation}
Note that \(\delta\leq10\phi(2)\). Then in order to get \eqref{equ:4.72} by \eqref{equ:4.111}, we only need
\begin{equation}\label{equ:4.114}
-\frac{\gamma}{3}\frac{2}{q_0}+\big(-\f{n}{4}+\f{1}{2}-\kappa\big)\big(1-\frac{2}{q_0}\big)\geq -(\nu+1)\frac{1}{q_0},
\end{equation}
which means
\[\gamma<\f{1}{2}+\f{3\nu}{2}+\frac{4-12\kappa}{3n-2}.\]
This can be easily achieved. Indeed, if \(n=3\), one can let \(\kappa\) be sufficiently small, then \(\gamma>1\)
is chosen; if \(n\geq4\), we can choose \(\gamma>\f{1}{2}\).

To prove \eqref{equ:4.110} for suitably  chosen \(\gamma\) above, we set \(\alpha=1+\nu\) and replace the cut-off function \(\rho\big(\phi(t)^{1-\alpha}\delta^\alpha\xi\big)\) in \eqref{equ:4.115}
with \(\rho\big(\phi(t)^{1-\alpha}\delta^\gamma\xi\big)\). Then \eqref{equ:4.110}
can be obtained by repeating the arguments of \eqref{equ:4.66} and \eqref{equ:4.67} with slight modifications,
here we omit the details.

\paragraph{\(\mathbf{L^2-L^2}\) estimates:}

Since \eqref{equ:4.77} has been replaced by \eqref{equ:4.110} for \(m=1\), the left
things are to prove \eqref{equ:4.75} for all \(m\geq1\) and \eqref{equ:4.77} for \(m\geq2\).
For this purpose, one just notes that \(|x|\leq\phi(t)\leq10\phi(2)\)
holds, thereafter the integral domain of \(v\) in \eqref{D4} is bounded, thus it only suffices to repeat the proofs in
Subsection \ref{sec4:large:iii} to get \eqref{equ:4.75} and \eqref{equ:4.77}, and the related details can be omitted here.

\section{The proof of (3.4)}

Suppose that $w$ solves \eqref{equ:3.1}, where $F\equiv0$ if $\phi(t)-|x|<1$. By Theorem 2.1 of \cite{Yag2}, we have
\[\parallel w(t,\cdot)\parallel_{L^2(\mathbb{R}^n)}\leq Ct\int_0^t\parallel F(s,\cdot)\parallel_{L^2(\mathbb{R}^n)}\md s,\]
which yields that for $0\leq t\leq5$,
\[\parallel w\parallel_{L^2([0,5]\times\mathbb{R}^n)}\leq C\parallel F\parallel_{L^2([0,5]\times\mathbb{R}^n)}.\]
Note that $\phi(t)-|x|$ is bounded from below and above for $0\leq t\leq5$, thus for any $\nu>0$,

\begin{equation}
\begin{split}
\Big\|\big(\phi(t)^2-|x|^2\big)^{\frac{m-2}{2(m+2)}-\nu}w\Big\|_{L^2([0,5]\times\mathbb{R}^n)}\leq C \Big\|\big(\phi(t)^2-|x|^2\big)^{\frac{1}{2}+\nu}F\Big\|_{L^2(\mathbb{R}^{n+1}_+)}.
\end{split}
\label{E1}
\end{equation}
Next we suppose $T\geq10$. We split $w$ as $w=w^0+w^1$, where for $j=0,1,$
\begin{equation*}
\left\{ \enspace
\begin{aligned}
&(\partial_t^2-t^m \Delta)w^j =F^j,   \\
&w^j(0,x)=0, \quad \partial_{t} w^j(0,x)=0
\end{aligned}
\right.
\end{equation*}
with
\begin{equation*}
F^0=
\left\{ \enspace
\begin{aligned}
&F,  &&\phi(t)\leq\frac{\phi(1)\phi(T)}{10\phi(2)}, \\
&0, &&\phi(t)>\frac{\phi(T)\phi(1)}{10\phi(2)}\\
\end{aligned}
\right.
\end{equation*}
and $F=F^0+F^1$. Then in order to prove (3.4), it suffices to show that for $j=0$, $1$,

\begin{equation}
\begin{split}
\Big\|\big(\phi(t)^2-|x|^2\big)^{\frac{m-2}{2(m+2)}-\nu}&w^j\Big\|_{L^2(\{(t,x):\frac{T}{2}\leq t\leq T\})}\leq C
\phi(T)^{-\frac{\nu}{4}}\Big\|\big(\phi(t)^2-|x|^2\big)^{\frac{1}{2}+\nu}F^j\Big\|_{L^2(\mathbb{R}^{n+1}_+)}.
\end{split}
\label{equ:5.2}
\end{equation}
Note that by the analogous treatment on $w^j$ as in \eqref{equ:4.34}-\eqref{equ:4.39},
\eqref{equ:5.2} would follow from
\begin{equation}
\phi(T)^{\frac{m-2}{2(m+2)}-\frac{\nu}{2}}\delta^{\frac{m-2}{2(m+2)}+\frac{\nu}{2}}\parallel v\parallel_{L^2(\{(t,x):\frac{T}{2}\leq t\leq T, \delta\leq\phi(t)-|x|\leq2\delta\})}\leq C\delta_0^{\frac{1}{2}}\parallel G\parallel_{L^2(\mathbb{R}^{n+1}_+)},
\label{equ:5.7}
\end{equation}
where $\operatorname{supp}G\subseteq\{(t,x): 1\leq t\leq2, \delta_0\leq\phi(t)-|x|\leq2\delta_0\}$, and $\delta\geq\delta_0$.

\subsection{Estimate of \(w^0\)}
Note that $\phi(T)\geq10\phi(2)$
holds for $(t,x)\in \operatorname{supp} w^0$. As in Section \ref{sec4},
we shall deal with the estimates in different cases according to the different scales of \(\delta\).

\subsubsection{$\mathbf{\delta\geq10\phi(2)}$}

As in Subsection \ref{sec4:large:i}, we shall use the pointwise estimate to handle the case of $\phi(t)-|x|\geq\delta\geq10\phi(2)$.
We now write
\[v=\sum_{j=-\infty}^{\infty}v_j=\sum_{j=-\infty}^{\infty}\int_{\mathbb{R}^n}\int_{\mathbb{R}^n} K_j(t,x;s,y)G(s,y)dyds,\]
where
\begin{equation*}
 K_j(t,x;s,y)=\int_{\mathbb{R}^n}e^{i((x-y)\cdot\xi+(\phi(t)-\phi(s))|\xi|)}
\beta\Big(\frac{|\xi|}{2^j}\Big)a(t,s,\xi)\hat{G}(s,\xi) \md\xi.
\end{equation*}
By \eqref{equ:4.51} and H\"{o}lder's inequality, we arrive at
\[|v_j|\leq\parallel K_j(t,x;s,y)(\phi^2(t)-|x|^2)^{\frac{1}{2}}\parallel_{L^2_{s,y}}\parallel(\phi^2(t)-|x|^2)^{-\frac{1}{2}}G(s,y)\parallel_{L^2_{s,y}}.
\]
Taking $N=\f{n}{2}-\f{1}{m+2}$ in \eqref{equ:4.51} and repeating the computations of \eqref{equ:4.52} and \eqref{equ:4.53}, we have
\begin{equation*}
\begin{split}
&\parallel K_j(t,x;s,y)(\phi^2(t)-|x|^2)^{\frac{1}{2}}\parallel_{L^2_{s,y}} \\
&\leq C\delta_0^{\frac{1}{2}}\phi(t)^{-\frac{n-1}{2}+\frac{1}{2}-\frac{m}{2(m+2)}}(\phi(t)-|x|)^{\frac{1}{m+2}-\frac{n}{2}+\frac{1}{2}}
\end{split}
\end{equation*}
and
\[\big(\int_{\mathbb{R}^n}\int_{\mathbb{R}^n}\big\{(\phi(t)^2-|x|^2)^{-\frac{1}{2}}G(s,y)\big\}^2\md y\md s\big)^{\frac{1}{2}}
\leq C(\delta\phi(T))^{-\frac{1}{2}}\parallel G\parallel_{L^2(\mathbb{R}^{n+1}_+)}.\]
In addition, by $-\frac{n-1}{2}+\frac{1}{m+2}<-\frac{1}{2}$ for $n\geq3$ and $m\geq1$, a direct computation yields
\begin{equation*}
\begin{split}
\Big\|&\phi(t)^{-\frac{n-1}{2}+\frac{1}{2}-\frac{m}{2(m+2)}}\big(\phi(t)-|x|\big)^{-\frac{n-1}{2}+\frac{1}{m+2}}\Big\|_{L_{t,x}^2} \\
&\leq C_n\big(\int_{\frac{T}{2}}^T\phi(t)^{-(n-1)+\frac{2}{m+2}}\int_0^{\phi(t)-10\phi(2)}
(\phi(t)-r)^{\frac{2}{m+2}-(n-1)}r^{n-1}\md r\md t\big)^{\frac{1}{2}} \\
&\leq C\big(\int_{\frac{T}{2}}^T\phi(t)^{\frac{2}{m+2}}\md t\big)^{\frac{1}{2}} \\
&\leq CT.
\end{split}
\end{equation*}
Thus we obtain
\begin{equation}
\begin{split}
&\phi(T)^{\frac{m-2}{2(m+2)}-\frac{\nu}{2}}\delta^{\frac{m-2}{2(m+2)}+\frac{\nu}{2}}\parallel v\parallel_{L^2(\{(t,x):\frac{T}{2}\leq t\leq T, \delta\leq\phi(t)-|x|\leq2\delta\})} \\
&\leq\phi(T)^{\frac{m-2}{2(m+2)}-\frac{\nu}{2}}\delta^{\frac{1}{2}+\frac{\nu}{2}}\delta_0^{\frac{1}{2}}(\delta\phi(T))^{-\frac{1}{2}}
T\parallel G\parallel_{L^2(\mathbb{R}^{n+1}_+)} \\
&\leq C\delta_0^{\frac{1}{2}}\parallel G\parallel_{L^2(\mathbb{R}^{n+1}_+)}
\end{split}
\label{E2}
\end{equation}
and then \eqref{equ:5.7} is proved.

\subsubsection{$\mathbf{\delta_0\leq\delta\leq10\phi(2)}$}\label{sec4:w0:small}

Next we study \eqref{equ:5.7} for the case of $\phi(t)-|x|\leq10\phi(2)$.
At first, we claim that under certain restrictions on the variable $\xi$,
this situation can be treated as in the proof of \eqref{equ:4.60} in $\S 4$.
Indeed, recalling \eqref{equ:4.6}-\eqref{equ:4.7}, and noting $t\geq s\geq1$, then we can assume
\[v=\int_0^t\int_{\mathbb{R}^n}\int_{\mathbb{R}^n}e^{i((x-y)\cdot\xi+(\phi(t)-\phi(s))|\xi|)}\phi(t)^{-\frac{m}{2(m+2)}}
|\xi|^{-1}G(s,y)\md y\md\xi \md s.\]
As in the \eqref{sec4:large:iii} of $\S 4$ for the proof of (3.3), we again split $v$ into a low
frequency part and a high frequency part respectively.
To this end, we choose a function $\beta\in C_0^\infty(\mathbb{R}^n)$ satisfying $\beta=1$ near the origin
such that $v=v_0+v_1$, where
\begin{equation*}
\begin{split}
v_1&=\int_0^t\int_{\mathbb{R}^n}\int_{\mathbb{R}^n}e^{i((x-y)\cdot\xi+(\phi(t)-\phi(s))|\xi|)}
\phi(t)^{-\frac{m}{2(m+2)}}\frac{1-\beta(\delta\xi)}{|\xi|}G(s,y)\md y\md\xi \md s.\\
\end{split}
\end{equation*}
If we set \(\phi(s)=|y|+\tau\) and use H\"{o}lder's inequality as in \eqref{equ:4.55}, then
\begin{equation*}
\begin{split}
|v_1|&\leq C\delta_0^{\f{1}{2}}\bigg(\int_{\delta_0}^{2\delta_0}\bigg|\int_{\mathbb{R}^n}\int_{\mathbb{R}^n}e^{i((x-y)\cdot\xi+(\phi(t)-\tau-|y|)|\xi|)}
\phi(t)^{-\frac{m}{2(m+2)}}\frac{1-\beta(\delta\xi)}{|\xi|}G(s,y)\md y\md\xi\bigg|^2
\md \tau\bigg)^{\f{1}{2}}\\
&=:C\delta_0^{\f{1}{2}}\bigg(\int_{\delta_0}^{2\delta_0}|T_1(t,\tau,\cdot)|^2\md \tau\bigg)^{\f{1}{2}}.
\end{split}
\end{equation*}
Note $\frac{1-\beta(\delta\xi)}{|\xi|}=O(\delta)$. Then the expression of $v_1$ is similar to \eqref{equ:4.58} with $Rez=0$
. Consequently we can apply the method of \eqref{equ:4.60} to get
\[\parallel T_1(t,\tau,\cdot)\parallel_{L^2(\mathbb{R}^n)}\leq C\big(\phi(t)-\tau\big)^{\frac{\nu}{2}-\frac{m}{2(m+2)}}
\delta^{-\frac{\nu+1}{2}+1}\parallel G(s,\cdot)\parallel_{L^2(\mathbb{R}^n)},\]
which derives
\[\parallel v_1\parallel_{L^2}\leq C\delta_0^{\frac{1}{2}}\delta^{-\frac{\nu+1}{2}+1}\phi(T)^{\frac{\nu}{2}-\frac{m-2}{2(m+2)}}\parallel G\parallel_{L^2(\mathbb{R}^{n+1}_+)}.\]
Due to $\delta\leq10\phi(2)$, the estimate \eqref{equ:5.7} for $v_1$ follows immediately.

We now estimate $v_0$. At first, one notes that
\begin{equation}\label{equ:5.15}
\begin{split}
&\Big|\int_{|\xi|\leq1}e^{i((x-y)\cdot\xi+(\phi(t)-\phi(s))|\xi|)}\phi(t)^{-\frac{m}{2(m+2)}}\frac{\beta(\delta\xi)}{|\xi|}\md\xi\Big| \\
&\le C\big(1+\big|\phi(t)-\phi(s)\big|\big)^{-\frac{n-1}{2}}\phi(t)^{-\frac{m}{2(m+2)}} \\
&\le C\big(1+|x-y|\big)^{-\frac{n-1}{2}}\phi(t)^{-\frac{m}{2(m+2)}}.
\end{split}
\end{equation}
In the last step of \eqref{equ:5.15} we have used the fact $\phi(t)-\phi(s)\geq|x-y|$ for any $(s,y)\in \operatorname{supp}F$ and $(t,x)\in \operatorname{supp}w$.
In fact, it follows from
the formula in Theorem 2.4 of \cite{Yag3} that the solution of \eqref{equ:3.1} satisfies
\begin{equation}
\begin{split}
w(t,x)=C_m\int_0^t\int_0^{\phi(t)-\phi(s)}&\partial_{r_1}\psi_{(F)_s}(r_1,x)\big(\phi(t)+\phi(s)+r_1\big)^{-\gamma}
\big(\phi(t)+\phi(s)-r_1\big)^{-\gamma}\\
&\times H\big(\gamma,\gamma,1,z\big)dr_1ds,
\end{split}
\label{equ:7.1}
\end{equation}
where $z=\frac{(-r_1+\phi(t)-\phi(s))(-r_1-\phi(t)+\phi(s))}
{(-r_1+\phi(t)+\phi(s))(-r_1-\phi(t)-\phi(s))}$, $H\big(\gamma,\gamma,1,z\big)$
is the hypergeometric function, and \\
$\psi_g(r_1,x)$ stands for the solution of the linear wave equation
\begin{equation*}
\begin{cases}
\partial_{r_1}^2 \psi-\Delta \psi=0, \\
\psi(0,x)=0, \quad \partial_{r_1} \psi(0,x)=g(x).\\
\end{cases}
\end{equation*}
Then by the finite propagation speed property for the linear wave operator and the expression \eqref{equ:7.1},
we  have that for any $(s,y)\in \operatorname{supp}F$ and $(t,x)\in \operatorname{supp}w$,
\[\phi(t)-\phi(s)\geq|x-y|.\]
Note that the corresponding inequality \eqref{equ:5.7} holds when we replace $v$ by
\[v_{01}=\int_0^t\int_{\mathbb{R}^n}\int_{|\xi|\leq1}e^{i((x-y)\cdot\xi+(\phi(t)-\phi(s))|\xi|)}\phi(t)^{-\frac{m}{2(m+2)}}
\frac{\beta(\delta\xi)}{|\xi|}G(s,y)\md y\md\xi \md s.\]
It follows from direct computation that
\begin{equation}\label{equ:5.16}
\begin{split}
\parallel &v_{01}\parallel_{L^2(\{(t,x): \frac{T}{2}\leq t\leq T, \delta\leq\phi(t)-|x|\leq2\delta\})} \\
&\leq\Big\|\iiint_{|\xi|\leq1}e^{i((x-y)\cdot\xi+(\phi(t)-\phi(s))|\xi|)}\phi(t)^{-\frac{m}{2(m+2)}}\frac{\beta(\delta\xi)}{|\xi|}\md\xi
G(s,y)\md y\md s\Big\|_{L^2_{t,x}} \\
&\leq C\Big\|\iint\big(1+\big|x-y\big|\big)^{-\frac{n-1}{2}}\phi(t)^{-\frac{m}{2(m+2)}}G(s,y)\md y\md s\Big\|_{L^2_{t,x}} \\
&\leq C\phi(T)^{-\frac{m}{2(m+2)}}\Big\| \Big(\parallel\big(1+\big|x-y\big|\big)^{-\frac{n-1}{2}}\parallel_{L^2_{s,y}}\parallel G\parallel_{L^2}\Big)\Big\|_{L^2_{t,x}} \\
&\leq C\phi(T)^{-\frac{m}{2(m+2)}}\parallel G\parallel_{L^2}\Big\|\Big(\int_{\frac{T}{2}}^T\int_{\delta\leq\phi(t)-|x|\leq2\delta}
\big(1+|x-y|\big)^{-(n-1)}\md x\md t\Big)^{\frac{1}{2}}\Big\|_{L^2_{s,y}}.
\end{split}
\end{equation}
By $|y|\leq\phi(2)$, then $\frac{1}{2}|x|\leq|x-y|\leq2|x|$ holds if $|x|\geq2\phi(2)$. On the other hand, if $|x|<2\phi(2)$,
then the integral with respect to the variable $x$ in last line of  in \eqref{equ:5.16}
must be finite and can be controlled by $\delta$. This yields
\begin{equation*}
\begin{split}
\Big\|&\Big(\int_{\frac{T}{2}}^T\int_{\delta\leq\phi(t)-|x|\leq2\delta}
(1+\big|x-y\big|)^{-(n-1)}\md x\md t\Big)^{\frac{1}{2}}\Big\|_{L^2_{s,y}} \\
&\leq C\Big\|\Big(\int_{\frac{T}{2}}^T\int_{\phi(t)-2\delta}^{\phi(t)-\delta}\md r\md t\Big)^{\frac{1}{2}}\Big\|_{L^2_{s,y}} \\
&\leq C(\delta_0\delta T)^{\frac{1}{2}},
\end{split}
\end{equation*}
which implies that the left side of \eqref{equ:5.7} can be controlled by
\begin{equation}
\begin{split}
\phi(T)^{\frac{m-2}{2(m+2)}-\frac{\nu}{2}}\delta^{\frac{m-2}{2(m+2)}+\frac{\nu}{2}}
\phi(T)^{-\frac{m}{2(m+2)}}(\delta_0\delta T)^{\frac{1}{2}}\parallel G\parallel_{L^2}\leq C\delta_0^{\frac{1}{2}}\parallel G\parallel_{L^2(\mathbb{R}^{n+1}_+)}.
\end{split}
\label{equ:A}
\end{equation}
Consequently, the proof on \eqref{equ:5.7}  will be completed if we could show that
\begin{equation}
\begin{split}
&\phi(T)^{\frac{m-2}{2(m+2)}-\frac{\nu}{2}}\delta^{\frac{m-2}{2(m+2)}+\frac{\nu}{2}}\parallel v_{02}\parallel_{L^2(\{(t,x):\frac{T}{2}\leq t\leq T, \delta\leq\phi(t)-|x|\leq2\delta\})} \\
&\leq C\delta_0^{\frac{1}{2}}\parallel G\parallel_{L^2(\mathbb{R}^{n+1}_+)},
\end{split}
\label{equ:5.10}
\end{equation}
where
\[v_{02}=\int_0^t\int_{\mathbb{R}^n}\int_{|\xi|\geq1}e^{i((x-y)\cdot\xi+(\phi(t)-\phi(s))|\xi|)}\phi(t)^{-\frac{m}{2(m+2)}}
\frac{\beta(\delta\xi)}{|\xi|}G(s,y)\md y\md\xi \md s.\]
The first step in proving \eqref{equ:5.10} is to notice that
\begin{equation*}
\begin{split}
&\parallel v_{02}\parallel_{L^2(\{(t,x): \frac{T}{2}\leq t\leq T, \delta\leq\phi(t)-|x|\leq2\delta\})} \\
&\leq\Big\| \int\parallel\check{T}G\parallel_{L^2(\{x:\delta\leq\phi(t)-|x|\leq2\delta\})}ds
\Big\|_{L^2(\{t:\frac{T}{2}\leq t\leq T\})},
\end{split}
\end{equation*}
where
\[\check{T}G=\int\int_{|\xi|\geq1}e^{i((x-y)\cdot\xi+(\phi(t)-\phi(s))|\xi|)}\phi(t)^{-\frac{m}{2(m+2)}}
\frac{\beta(\delta\xi)}{|\xi|}G(s,y)\md y\md\xi.\]
To estimate $\parallel\check{T}G\parallel_{L^2(\{x:\delta\leq\phi(t)-|x|\leq2\delta\})}$,
it follows from Lemma \ref{lem:a1} and direct computation that
\begin{equation}
\begin{split}
&\phi(T)^{\frac{m-2}{2(m+2)}-\frac{\nu}{2}}\delta^{\frac{m-2}{2(m+2)}+\frac{\nu}{2}}\parallel v_{02}\parallel_{L^2(\{(t,x):\frac{T}{2}\leq t\leq T, \delta\leq\phi(t)-|x|\leq2\delta\})} \\
&\leq C\phi(T)^{\frac{m-2}{2(m+2)}-\frac{\nu}{2}}\delta^{\frac{m-2}{2(m+2)}+\frac{\nu}{2}}\delta^{\frac{1}{2}}T^{\frac{1}{2}}
\phi(T)^{-\frac{m}{2(m+2)}} \\
&\qquad \times \Big(\sum_{j=0}^{\infty}2^{\frac{j}{2}}
\Big\|\iint e^{i(-y\cdot\xi-\phi(s))|\xi|)}\frac{\beta(\delta\xi)}{|\xi|}G(s,y)\md y\md s\Big\|_{L^2(2^j\leq|\xi|\leq2^{j+1})}\Big) \\
&\leq C\delta^{\frac{m}{m+2}}\Big(\sum_{j=0}^{\infty}\Big\|\iiint_{2^j\leq|\xi|\leq2^{j+1}} e^{i((x-y)\cdot\xi-\phi(s))|\xi|)}\frac{\beta(\delta\xi)}{|\xi|^{\frac{1}{2}}}G(s,y)\md y\md\xi \md s\Big\|_{L^2_x}\Big).
\end{split}
\label{equ:5.11}
\end{equation}
Then applying H\"{o}lder's inequality as in \eqref{equ:4.55} yields
\begin{equation}
\begin{split}
&\Big\|\iiint_{2^j\leq|\xi|\leq2^{j+1}} e^{i((x-y)\cdot\xi-\phi(s))|\xi|)}\frac{\beta(\delta\xi)}{|\xi|^{\frac{1}{2}}}G(s,y)\md y \md\xi \md s\Big\|_{L^2_x} \\
&\leq C\delta_0^{\frac{1}{2}}\Big(\iint\Big|\iint_{2^j\leq|\xi|\leq2^{j+1}}e^{i((x-y)\cdot\xi-(|y|+\tau)|\xi|)}
\frac{\beta(\delta\xi)}{|\xi|^{\frac{1}{2}}}
G(\phi^{-1}(|y|+\tau),y)\md y\md\xi\Big|^2\md x\md\tau\Big)^{\frac{1}{2}}.
\end{split}
\label{equ:5.12}
\end{equation}
On the other hand, by applying Lemma 3.2 in \cite{Gls}, we obtain that
for each fixed $j\ge 0$,
\begin{equation}
\begin{split}
&\Big\|\iint_{2^j\leq|\xi|\leq2^{j+1}}e^{i((x-y)\cdot\xi-(|y|+\tau)|\xi|)}\frac{\beta(\delta\xi)}{|\xi|^{\frac{1}{2}}}
G(\phi^{-1}(|y|+\tau),y)\md y\md\xi\Big\|_{L^2_{\tau,x}} \\
&\leq C\Big(\iint_{\phi(1)\leq|y|+\tau\leq \phi(2)}\left|G\big(\phi^{-1}(|y|+\tau),y\big)\right|^2\md y\md\tau\Big)^{\frac{1}{2}}\\
&=C\Big(\iint_{\phi(1)\leq|y|+\tau\leq \phi(2)}\left|G\big(\phi^{-1}(|y|+\tau),y\big)\right|^2\md\tau \md y\Big)^{\frac{1}{2}} \\
&\leq C\Big(\int_{\mathbb{R}^n}\int_1^2|G(s,y)|^2s^{\frac{m}{2}}\md s\md y\Big)^{\frac{1}{2}} \\
&\leq C\Big(\int_{\mathbb{R}^n}\int_1^2|G(s,y)|^2\md s\md y\Big)^{\frac{1}{2}} \\
&\leq C\parallel G\parallel_{L^2{(\mathbb{R}^{n+1}_+)}}. \\
\end{split}
\label{equ:5.13}
\end{equation}
On the other hand, in the support of $\beta(\delta\xi)$, one has $2^j\delta\leq|\xi|\delta\leq C$, which derives
\begin{equation}
\begin{split}
&j\leq C(1+|\ln{\delta}|).
\end{split}
\label{equ:5.14}
\end{equation}
Substituting \eqref{equ:5.13} and \eqref{equ:5.14} into \eqref{equ:5.12} and further \eqref{equ:5.11}, we arrive at

\begin{align*}
&\phi(T)^{\frac{m-2}{2(m+2)}-\frac{\nu}{2}}\delta^{\frac{m-2}{2(m+2)}+\frac{\nu}{2}}\parallel v_{02}\parallel_{L^2(\{(t,x):\frac{T}{2}\leq t\leq T, \delta\leq\phi(t)-|x|\leq2\delta\})} \\
&\leq C\delta^{\frac{m}{m+2}}(1+|\ln{\delta}|)\delta_0^{\frac{1}{2}}\parallel G\parallel_{L^2{(\mathbb{R}^{n+1}_+)}} \\
&\leq C\delta_0^{\frac{1}{2}}\parallel G\parallel_{L^2{(\mathbb{R}^{n+1}_+)}}.
\end{align*}

\subsection{Estimate of \(w^1\)}
We only need to prove \eqref{equ:5.7} for \(\phi(T)\leq10\phi(2)\). In this case
we must have \(\delta_0\leq\delta\leq10\phi(2)\). Our task is reduced to prove
\begin{equation}
\delta^{\frac{m-2}{2(m+2)}+\frac{\nu}{2}}\parallel v\parallel_{L^2(\{(t,x):\frac{T}{2}\leq t\leq T, \delta\leq\phi(t)-|x|\leq2\delta\})}\leq C\delta_0^{\frac{1}{2}}\parallel G\parallel_{L^2(\mathbb{R}^{n+1}_+)}.
\label{equ:5.17}
\end{equation}
As in Subsection \ref{sec4:w0:small}, we can write
\[v=\int_0^t\int_{\mathbb{R}^n}\int_{\mathbb{R}^n}e^{i((x-y)\cdot\xi+(\phi(t)-\phi(s))|\xi|)}
|\xi|^{-1}G(s,y)\md y\md\xi \md s.\]
Note in this case the time variable \(t\) is bounded from both below and above, thus we do not require
to focus on the term \(\phi(t)^{-\frac{m}{2(m+2)}}\)in Subsection 5.1. As in Subsection \ref{sec4:w0:small},
we shall split $v$ into a low frequency  part and a high frequency part respectively.
To this end, we choose a function $\beta\in C_0^\infty(\mathbb{R}^n)$ satisfying $\beta=1$ near the origin
such that $v=v_0+v_1$, where
\begin{equation*}
\begin{split}
v_1&=\int_0^t\int_{\mathbb{R}^n}\int_{\mathbb{R}^n}e^{i((x-y)\cdot\xi+(\phi(t)-\phi(s))|\xi|)}
\frac{1-\beta(\delta\xi)}{|\xi|}G(s,y)\md y\md\xi \md s. \\
\end{split}
\end{equation*}
If we set \(\phi(s)=|y|+\tau\) and use H\"{o}lder's inequality as in \eqref{equ:4.55}, then
\begin{equation*}
\begin{split}
|v_1|&\leq C\delta_0^{\f{1}{2}}\bigg(\int_{\delta_0}^{2\delta_0}\bigg|\int_{\mathbb{R}^n}\int_{\mathbb{R}^n}e^{i((x-y)\cdot\xi+(\phi(t)-\tau-|y|)|\xi|)}
\frac{1-\beta(\delta\xi)}{|\xi|}G(s,y)\md y\md\xi\bigg|^2
\md \tau\bigg)^{\f{1}{2}}\\
&=:C\delta_0^{\f{1}{2}}\bigg(\int_{\delta_0}^{2\delta_0}|\bar{T_1}(t,\tau,\cdot)|^2\md \tau\bigg)^{\f{1}{2}}.
\end{split}
\end{equation*}
Note $\frac{1-\beta(\delta\xi)}{|\xi|}=O(\delta)$. Then the expression of $v_1$ is similar to \eqref{equ:4.58} with $Rez=0$
. Consequently we can apply the method of \eqref{equ:4.60} to get
\[\parallel \bar{T_1}(t,\tau,\cdot)\parallel_{L^2(\mathbb{R}^n)}\leq C\big(\phi(t)-\tau\big)^{\frac{\nu}{2}}
\delta^{-\frac{\nu+1}{2}+1}\parallel G(s,\cdot)\parallel_{L^2(\mathbb{R}^n)},\]
which derives
\[\parallel v_1\parallel_{L^2}\leq C\delta_0^{\frac{1}{2}}\delta^{-\frac{\nu+1}{2}+1}\phi(T)^{\frac{\nu}{2}}\parallel G\parallel_{L^2(\mathbb{R}^{n+1}_+)}.\]
Due to $\delta\leq\phi(T)\leq10\phi(2)$ and \(\phi(T)\geq\phi(1)\), the estimate \eqref{equ:5.17} for $v_1$ follows immediately.

We now estimate $v_0$. At first, one notes that similarly to \eqref{equ:5.15}, we have
\begin{equation*}
\begin{split}
\bigg|\int_{|\xi|\leq1}e^{i((x-y)\cdot\xi+(\phi(t)-\phi(s))|\xi|)}\frac{\beta(\delta\xi)}{|\xi|}\md\xi\bigg|
\le C\big(1+\big|x-y\big|\big)^{-\frac{n-1}{2}}.
\end{split}
\end{equation*}
Thus the corresponding inequality \eqref{equ:5.7} holds as long as we replace $v$ by
\[v_{01}=\int_0^t\int_{\mathbb{R}^n}\int_{|\xi|\leq1}e^{i((x-y)\cdot\xi+(\phi(t)-\phi(s))|\xi|)}
\frac{\beta(\delta\xi)}{|\xi|}G(s,y)\md y\md\xi \md s.\]
As in \eqref{equ:5.16} one has
\begin{equation*}
\begin{split}
\parallel &v_{01}\parallel_{L^2(\{(t,x): \frac{T}{2}\leq t\leq T, \delta\leq\phi(t)-|x|\leq2\delta\})} \\
&\leq\Big\|\iiint_{|\xi|\leq1}e^{i((x-y)\cdot\xi+(\phi(t)-\phi(s))|\xi|)}\frac{\beta(\delta\xi)}{|\xi|}\md\xi
G(s,y)\md y\md s\Big\|_{L^2_{t,x}} \\
&\leq C\parallel G\parallel_{L^2}\Big\|\Big(\int_{\frac{T}{2}}^T\int_{\delta\leq\phi(t)-|x|\leq2\delta}
(1+\big|x-y\big|)^{-(n-1)}\md x\md t\Big)^{\frac{1}{2}}\Big\|_{L^2_{s,y}}.
\end{split}
\end{equation*}
Note that in this case of \(|x|\leq\phi(t)\leq10\phi(2)\), a direct computation yields
\begin{equation*}
\begin{split}
\Big\|&\Big(\int_{\frac{T}{2}}^T\int_{\delta\leq\phi(t)-|x|\leq2\delta}
\big(1+\big|x-y\big|\big)^{-(n-1)}\md x\md t\Big)^{\frac{1}{2}}\Big\|_{L^2_{s,y}} \\
&\leq C\Big\|\Big(\int_{\frac{T}{2}}^T\int_{\phi(t)-2\delta}^{\phi(t)-\delta}\md r\md t\Big)^{\frac{1}{2}}\Big\|_{L^2_{s,y}} \\
&\leq C(\delta_0\delta T)^{\frac{1}{2}},
\end{split}
\end{equation*}
which implies that the left side of \eqref{equ:5.17} can be controlled by
\begin{equation}
\begin{split}
\delta^{\frac{m-2}{2(m+2)}+\frac{\nu}{2}}
(\delta_0\delta T)^{\frac{1}{2}}\parallel G\parallel_{L^2}\leq C\delta_0^{\frac{1}{2}}\parallel G\parallel_{L^2(\mathbb{R}^{n+1}_+)}.
\end{split}
\label{equ:5.18}
\end{equation}
Consequently, our proof will be completed if we could show that
\begin{equation*}
\begin{split}
&\delta^{\frac{m-2}{2(m+2)}+\frac{\nu}{2}}\parallel v_{02}\parallel_{L^2(\{(t,x):\frac{T}{2}\leq t\leq T, \delta\leq\phi(t)-|x|\leq2\delta\})} \\
&\leq C\delta_0^{\frac{1}{2}}\parallel G\parallel_{L^2(\mathbb{R}^{n+1}_+)},
\end{split}
\end{equation*}
where
\[v_{02}=\int_0^t\int_{\mathbb{R}^n}\int_{|\xi|\geq1}e^{i((x-y)\cdot\xi+(\phi(t)-\phi(s))|\xi|)}
\frac{\beta(\delta\xi)}{|\xi|}G(s,y)\md y\md\xi \md s.\]
But this just only follows from the estimate of \(v_{02}\) in Subsection \ref{sec4:w0:small} if one notes
the fact of \(\delta\leq\phi(T)\leq10\phi(2)\).

Collecting the estimates on $w^0$ and $w^1$ in Subsection 5.1 and Subsection 5.2 respectively,
the proof of (3.4) is completed.

\section{Proof of Theorem 1.1.}\label{sec:6}
Based on the weighted inequalities $\S 3$-$\S5$, we begin the proof of Theorem 1.1.

\begin{proof}
By the local existence and regularity of solution $u$ to (1.2)
(for examples, one can see \cite{Rua1} or references therein), one knows that $u\in C^{\infty}([0, T_0]\times\Bbb R^n)$
exists for any fixed constant $T_0<1$ and $u$ has a compact support on the variable $x$. Moreover, for any $N\in\Bbb N$,
\begin{equation}
\begin{split}
&\Big\|{u}\Big(\f{T_0}{2}, \cdot\Big)\Big\|_{C^N}+\Big\| \partial_t{u}\Big(\f{T_0}{2}, \cdot\Big)\Big\|_{C^N}\le C_{N}\ve.\\
\end{split}
\label{equ:6.1}
\end{equation}
Then we can take $({u}(\f{T_0}{2}, x), \partial_t{u}(\f{T_0}{2}, x))$ as the new initial data to
solve (1.2) from $t=\f{T_0}{2}$.

Now we use the standard Picard iteration to prove Theorem 1.1. Let $u_{-1}\equiv0$, and for $k=0,1,2,3,\ldots$, let $u_k$ be the solution of
the following equation
\begin{equation*}
\begin{cases}
&\partial_t^2 u_k-t^m\triangle u_k=F_p(t,u_{k-1}), \quad (t,x)\in\big(\f{T_0}{2}, \infty\big)\times\mathbb{R}^n,\\
&u_k\big(\f{T_0}{2},x\big)={u}\big(\f{T_0}{2},x\big)\quad \partial_tu_k\big(\f{T_0}{2},x\big)=\partial_t{u}\big(\f{T_0}{2}, x\big).
\end{cases}
\end{equation*}
For  $p\in (p_{crit}(m,n), p_{conf}(m,n))$, we can fix a number $\gamma>0$ satisfying
\[\frac{1}{p(p+1)}<\gamma<\frac{(m+2)n-2}{2(m+2)}-\frac{(m+2)n-m}{(m+2)(p+1)}.\]
Set
\begin{align*}
M_k=&\Big\|\Big(\big(\phi(t)+M\big)^2-|x|^2\Big)^\gamma u_k\Big\|_{L^q([\f{T_0}{2},\infty)\times\mathbb{R}^n)}, \\
N_k=&\Big\|\Big(\big(\phi(t)+M\big)^2-|x|^2\Big)^\gamma (u_k-u_{k-1})\Big\|_{L^q([\f{T_0}{2},\infty)\times\mathbb{R}^n)},
\end{align*}
where $q=p+1$. By \eqref{equ:6.1} and Theorem 2.1 we know that there exists a constant $C_0>0$ such that
\[M_0\leq C_0\ve.\]
Notice that for $j$, $k\geq0$,
\begin{equation*}
\begin{cases}
&\partial_t^2 (u_{k+1}-u_{j+1})-t^m \Delta (u_{k+1}-u_{j+1}) =V(u_k,u_j)(u_k-u_j),   \\
&(u_{k+1}-u_{j+1})\big(\f{T_0}{2}, x\big)=0, \quad \partial_t(u_{k+1}-u_{j+1})\big(\f{T_0}{2}, x\big)=0,
\end{cases}
\end{equation*}
where
\begin{equation*}
\big|V(u_k,u_j)\big|\le
\begin{cases}
&C(|u_k|+|u_j|)^{p-1} \quad \text{if} \quad t\geq T_0, \\
&C(1+|u_k|+|u_j|)^{p-1} \quad \text{if} \quad \f{T_0}{2}\leq t\leq T_0.
\end{cases}
\end{equation*}
By our assumptions
\[\gamma<\frac{(m+2)n-2}{2(m+2)}-\frac{(m+2)n-m}{(m+2)q} \quad \text{and} \quad p\gamma>\frac{1}{q}, \quad q=p+1,\]
then applying Theorem 3.2 and H\"{o}lder's inequality yields
\begin{equation}\label{equ:6.4}
\begin{split}
&\Big\|\Big(\big(\phi(t)+M\big)^2-|x|^2\Big)^\gamma (u_{k+1}-u_{j+1})\Big\|_{L^q([\f{T_0}{2},\infty)\times\mathbb{R}^n)} \\
&\leq C\Big\|\Big(\big(\phi(t)+M\big)^2-|x|^2\Big)^{p\gamma}V(u_k,u_j)(u_k-u_j)\Big\|_{L^{\frac{q}{q-1}}([\f{T_0}{2},\infty)\times\mathbb{R}^n)} \\
&\leq C\Big\{\Big\|\Big(\big(\phi(t)+M\big)^2-|x|^2\Big)^\gamma (1+|u_k|+|u_j|)\Big\|_{L^q([\f{T_0}{2}, T_0]\times\mathbb{R}^n)} \\
&\qquad +\Big\|\Big(\big(\phi(t)+M\big)^2-|x|^2\Big)^\gamma (|u_k|+|u_j|)\Big\|_{L^q([T_0,\infty]\times\mathbb{R}^n)}\Big\}^{p-1} \\
&\quad \times \Big\|\Big(\big(\phi(t)+M\big)^2-|x|^2\Big)^\gamma (u_k-u_j)\Big\|_{L^q([\f{T_0}{2},\infty)\times\mathbb{R}^n)} \\
&\leq C\big(C_1T_0^{\frac{1}{q}}+M_k
+M_j\big)^{p-1}\Big\|\Big(\big(\phi(t)+M\big)^2-|x|^2\Big)^\gamma (u_k-u_j)\Big\|_{L^q([\f{T_0}{2},\infty)\times\mathbb{R}^n)}.
\end{split}
\end{equation}
If $j=-1$, then $M_j=0$, and we conclude that from \eqref{equ:6.4}
\[M_{k+1}\leq M_0+\frac{M_k}{2}\quad \text{for} \quad C\big(C_1T_0^{\frac{1}{q}}+M_k\big)^{p-1}\leq\frac{1}{2}.\]
This yields that
\[M_k\leq 2M_0 \quad \text{if} \quad C\big(C_1T_0^{\frac{1}{q}}+C_0\ve\big)^{p-1}\leq\frac{1}{2}.\]
Thus we get the boundedness of $\{u_k\}$ in the space $L^q(\Bbb R_+^{n+1})$ when the fixed constant $T_0$
and $\ve>0$ are sufficiently small. Similarly, we have
\[N_{k+1}\leq\frac{1}{2}N_k,\]
which derives that there exists a function $u\in L^q\big(\big[\f{T_0}{2}, \infty\big)\times\mathbb{R}^n\big)$ such that  $u_k\rightarrow u\in L^q\big(\big[\f{T_0}{2}, \infty\big)\times\mathbb{R}^n\big)$.
In addition, by the uniform boundedness of $M_k$ and the computations above, one easily obtains
\begin{align*}
&\parallel F_p(t,u_{k+1})-F_p(t,u_k)\parallel_{L^{\frac{q}{q-1}}([\f{T_0}{2}, \infty)\times\mathbb{R}^n)} \\
&\leq C\parallel u_{k+1}-u_k\parallel_{L^q([\f{T_0}{2}, \infty)\times\mathbb{R}^n)} \\
&\leq C\phi\Big(\frac{T_0}{4}\Big)^{-\gamma}N_k \\
&\le C2^{-k}.
\end{align*}
Therefore $F_p(t,u_k)\rightarrow F_p(t,u)$ in $L^{\frac{q}{q-1}}\big(\big[\f{T_0}{2}, \infty\big)\times\mathbb{R}^n\big)$ and hence $u$ is a weak solution of (1.2)
in the sense of distributions. Then we complete the proof of Theorem 1.1.
\end{proof}

\appendix
\section{Appendix.}\label{app}
The first lemma is from Lemma 3.8 of \cite{Gl2}.
\begin{lemma}\label{lem:a5}
Assume that  $\beta(\tau)\in C_0^\infty(\frac{1}{2},2)$ and
$\sum\limits_{j=-\infty}^\infty\beta(\frac{\tau}{2^j}) \equiv1$ for $\tau>0$.
Define the Littlewood-Paley operators of function $G$ as follows
\[G_j(t,x)=(2\pi)^{-n}\int_{\mathbb{R}^n}e^{ix\cdot\xi}\beta(\frac{|\xi|}{2^j})\hat{G}(t,\xi)d\xi.\]
Then one has that
\begin{align*}
\parallel G\parallel_{L^s_tL^q_x}\leq C(\sum\limits_{j=-\infty}^{\infty}\parallel G_j\parallel^2_{L^s_tL^q_x})^{\frac{1}{2}}
\qquad \text{for\quad $2\leq q<\infty$ and $2\leq s \leq \infty$}
\end{align*}
and
\begin{align*}
(\sum\limits_{j=-\infty}^{\infty}\parallel G_j\parallel^2_{L^r_tL^p_x})^{\frac{1}{2}}\leq C\parallel G\parallel_{L^r_tL^p_x}
\qquad \text{for\quad $1<p\leq2$\quad and \quad $1\leq r \leq 2$}.
\end{align*}
\end{lemma}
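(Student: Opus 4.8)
The plan is to deduce both inequalities from the classical scalar-valued Littlewood--Paley square-function estimate on $\mathbb{R}^n$, applied for each fixed time $t$, and then to interchange the $\ell^2$-summation in $j$ with the spatial and temporal norms by means of Minkowski-type inequalities. Throughout I write $SG(t,x)=\big(\sum_j|G_j(t,x)|^2\big)^{1/2}$ for the Littlewood--Paley square function of $G(t,\cdot)$; the basic input is the standard two-sided bound $c_q\|G(t,\cdot)\|_{L^q_x}\le\|SG(t,\cdot)\|_{L^q_x}\le C_q\|G(t,\cdot)\|_{L^q_x}$, valid for every $1<q<\infty$ uniformly in $t$, which in turn follows from the vector-valued Calder\'on--Zygmund theory of the Littlewood--Paley projections.

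For the first inequality (with $2\le q<\infty$, $2\le s\le\infty$) I would start from the upper bound $\|G(t,\cdot)\|_{L^q_x}\le C\|SG(t,\cdot)\|_{L^q_x}$. Since $q\ge2$, the triangle inequality in $L^{q/2}_x$ gives $\|SG(t,\cdot)\|_{L^q_x}^2=\big\|\sum_j|G_j(t,\cdot)|^2\big\|_{L^{q/2}_x}\le\sum_j\big\||G_j(t,\cdot)|^2\big\|_{L^{q/2}_x}=\sum_j\|G_j(t,\cdot)\|_{L^q_x}^2$, hence $\|G(t,\cdot)\|_{L^q_x}\le C\big(\sum_j\|G_j(t,\cdot)\|_{L^q_x}^2\big)^{1/2}$. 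Taking the $L^s_t$ norm and running the same manoeuvre once more — the triangle inequality in $L^{s/2}_t$ applied to the nonnegative functions $t\mapsto\|G_j(t,\cdot)\|_{L^q_x}^2$, or the elementary bound $\sup_t\big(\sum_j a_j(t)^2\big)^{1/2}\le\big(\sum_j\sup_t a_j(t)^2\big)^{1/2}$ when $s=\infty$, both legitimate precisely because $s\ge2$ — yields $\|G\|_{L^s_tL^q_x}\le C\big(\sum_j\|G_j\|_{L^s_tL^q_x}^2\big)^{1/2}$.

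For the second inequality (with $1<p\le2$, $1\le r\le2$) the same scheme runs with all inequalities reversed. Here I would use the lower square-function bound $\|SG(t,\cdot)\|_{L^p_x}\le C\|G(t,\cdot)\|_{L^p_x}$ together with the reverse Minkowski inequality: for an exponent $\theta\le1$ the functional $g\mapsto\|g\|_{L^\theta}$ is superadditive on nonnegative functions, so with $\theta=p/2\le1$ one obtains $\|SG(t,\cdot)\|_{L^p_x}^2=\big\|\sum_j|G_j(t,\cdot)|^2\big\|_{L^{p/2}_x}\ge\sum_j\|G_j(t,\cdot)\|_{L^p_x}^2$. Combining the two steps gives the pointwise-in-$t$ estimate $\big(\sum_j\|G_j(t,\cdot)\|_{L^p_x}^2\big)^{1/2}\le C\|G(t,\cdot)\|_{L^p_x}$, and one further application of the reverse Minkowski inequality in $L^{r/2}_t$ (here $r\le2$) to the functions $t\mapsto\|G_j(t,\cdot)\|_{L^p_x}^2$ transfers this to the mixed norm, producing $\big(\sum_j\|G_j\|_{L^r_tL^p_x}^2\big)^{1/2}\le C\|G\|_{L^r_tL^p_x}$.

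The argument is essentially routine once the scalar Littlewood--Paley estimate is invoked; the only point that needs care — and thus the closest thing to an obstacle — is keeping track of the directions of the Minkowski-type steps, i.e.\ the ordinary triangle inequality in $L^{q/2}_x,L^{s/2}_t$ when the exponent is $\ge1$ versus its reversal in $L^{p/2}_x,L^{r/2}_t$ when the exponent is $\le1$, and checking that the endpoint cases $q=2$, $p=2$ (where $L^{q/2}_x=L^{p/2}_x=L^1_x$ and the spatial steps become identities up to the Littlewood--Paley constants), $s=2,\infty$, $r=1,2$ are all covered by the same reasoning. Alternatively, the statement may simply be quoted verbatim from Lemma~3.8 of \cite{Gl2}.
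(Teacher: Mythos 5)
Your proof is correct, and every step is justified. The first inequality follows from the scalar Littlewood--Paley upper bound $\|G(t,\cdot)\|_{L^q_x}\le C\|SG(t,\cdot)\|_{L^q_x}$ followed by the triangle inequality in $L^{q/2}_x$ (valid since $q/2\ge 1$) and then in $L^{s/2}_t$ (valid since $s/2\ge 1$, with the $s=\infty$ endpoint handled by the elementary $\sup$--$\ell^2$ interchange you cite and $s=2$ giving equality). The second inequality uses the upper square-function bound $\|SG(t,\cdot)\|_{L^p_x}\le C\|G(t,\cdot)\|_{L^p_x}$, which indeed requires $p>1$, exactly the hypothesis given, combined with the reverse (superadditive) Minkowski inequality in $L^{p/2}_x$ and $L^{r/2}_t$ for the quasi-norm exponents $p/2\le 1$, $r/2\le 1$; the directions of the inequalities are all correct.

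The paper itself does not prove this lemma: it merely cites Lemma~3.8 of Lindblad--Sogge \cite{Gl2} and uses the statement as a black box. You have therefore supplied a self-contained elementary derivation where the paper supplies only a reference. This is a legitimate and useful thing to do; nothing in your argument goes beyond standard vector-valued Calder\'on--Zygmund theory for the Littlewood--Paley projections plus two applications of Minkowski and reverse Minkowski, and the constraints $2\le q<\infty$, $2\le s\le\infty$ on the first inequality and $1<p\le 2$, $1\le r\le 2$ on the second are exactly what your argument needs, so you have not proved something weaker or stronger than was claimed.
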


The following lemma is a variant of Lemma 3.2 in \cite{Gls}.

\begin{lemma}\label{lem:a2}
If \eqref{equ:4.57} holds, then \eqref{equ:4.39} holds.
\end{lemma}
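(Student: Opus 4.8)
\noindent\emph{Proof proposal.} The plan is to run a bookkeeping ``assembly'' argument: the fixed-time, single-slab estimate \eqref{equ:4.57} for the model operator $T_\mu$ of \eqref{equ:H.1} is to be upgraded to the weighted space-time bound \eqref{equ:4.39} by combining H\"older's inequality in the slicing variable, Minkowski's integral inequality, a change of variables, and an explicit integration in $t$ that exploits the precise form $\phi(t)=\frac{2}{m+2}t^{\frac{m+2}{2}}$. No new oscillatory-integral analysis is needed; everything rests on \eqref{equ:4.55} (the H\"older reduction of $|v_\mu|$) together with \eqref{equ:4.57}. By the reductions carried out in \S\ref{sec4:large:iii}, the space-time estimate produced this way is exactly \eqref{equ:4.39}.

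First I would start from \eqref{equ:4.55}, which --- after replacing $\phi(t)-\tau$ by $\phi(t)$, legitimate in the ``large time'' regime since $\phi(t)\ge\phi(t)-\tau>\tfrac12\phi(t)$ --- reads $|v_\mu(t,x)|\le C\delta_0^{1/q_0}\,\big\|(T_\mu g_\tau)(t,x)\big\|_{L^{\frac{q_0}{q_0-1}}_\tau(\delta_0,2\delta_0)}$, where $g_\tau(y)=G(\phi^{-1}(\tau+|y|),y)$ restricted to $\{\tfrac12\phi(1)\le|y|\le\phi(2)\}$. Taking the $L^{q_0}_x$-norm over the slab $\{\delta\le\phi(t)-|x|\le2\delta\}$ and invoking Minkowski's integral inequality (permissible since $q_0>2$, hence $q_0\ge\frac{q_0}{q_0-1}$) moves the $L^{q_0}_x$-norm inside the $\tau$-integral; applying \eqref{equ:4.57} slicewise then gives
\[
\|v_\mu(t,\cdot)\|_{L^{q_0}_x(\{\delta\le\phi(t)-|x|\le2\delta\})}\le C\,\delta_0^{1/q_0}\,\phi(t)^{\frac{\nu}{2}-\frac{m+4}{q_0(m+2)}}\,\delta^{-\frac{\nu}{2}-\frac{1}{q_0}}\Big(\int_{\delta_0}^{2\delta_0}\|g_\tau\|_{L^{\frac{q_0}{q_0-1}}}^{\frac{q_0}{q_0-1}}\,d\tau\Big)^{\frac{q_0-1}{q_0}}.
\]
Since $\phi'(s)=s^{m/2}$ is bounded above and below for $s\in[1,2]$, the change of variables $\tau\mapsto s=\phi^{-1}(\tau+|y|)$ at fixed $y$, together with $\operatorname{supp}G\subseteq\{1\le s\le2\}$, turns the last factor into $C\|G\|_{L^{\frac{q_0}{q_0-1}}(\mathbb R^{1+n}_+)}$.

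It then remains to integrate over $t\in[T/2,T]$. Writing $dt\sim\phi(t)^{-\frac{m}{m+2}}\,d\phi(t)$ and using $\phi(T/2)\sim\phi(T)$, the $\phi(t)$-exponent collapses to $\frac{q_0\nu}{2}-2$, so $\int_{T/2}^T\phi(t)^{q_0(\frac{\nu}{2}-\frac{m+4}{q_0(m+2)})}\,dt\le C\,\phi(T)^{\frac{q_0\nu}{2}-1}$ in every case; raising the resulting inequality to the power $1/q_0$ and rearranging the powers of $\phi(T)$, $\delta$ and $\delta_0$ yields exactly \eqref{equ:4.39}. I expect the only delicate point to be this final numerology: the exponent $\frac{m+4}{q_0(m+2)}$ in \eqref{equ:4.57} is precisely the one for which the $t$-integral produces $\phi(T)^{\frac{q_0\nu}{2}-1}$, i.e.\ the factor $\phi(T)^{\frac{\nu}{2}-\frac{1}{q_0}}$ demanded by \eqref{equ:4.39}. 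One also has to check that the passage from the model pieces $v_\mu$ back to the actual solution $v$ of \eqref{equ:4.104} --- absorbing the amplitude bounds \eqref{equ:4.7} into the dyadic pieces and disposing of the low frequencies $|\xi|\le1$ separately, as in \S\ref{sec4:large:iii} --- leaves all of these powers untouched.
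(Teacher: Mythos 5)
Your proposal is correct and follows essentially the same route as the paper's proof of Lemma~\ref{lem:a2}: start from the H\"older reduction \eqref{equ:4.55}, use Minkowski's integral inequality to move the $L^{q_0}_x$-norm inside the $\tau$-integral, apply \eqref{equ:4.57} slicewise, pull out the $\phi(t)$ powers (using $\phi(t)-\tau\sim\phi(t)$), collapse the $\tau$-integral to $\|G\|_{L^{q_0/(q_0-1)}}$, and integrate over $t\in[T/2,T]$. Your version is, if anything, a bit more explicit (naming Minkowski and the $\tau\mapsto s$ change of variables, and tracking $\nu/2$ consistently where the paper's displayed proof carries a harmless $\nu$-versus-$\nu/2$ slip), but the structure and the decisive exponent bookkeeping are identical.
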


\begin{proof}
If \eqref{equ:4.57} holds, then we arrive at
\begin{equation*}
\begin{split}
&\parallel v\parallel_{L^{q_0}(\{(t,x): \delta\leq\phi(t)-|x|\leq2\delta\})} \\
&\leq C\delta_0^{\frac{1}{q_0}}\Big\|\Big(\int_{\delta_0}^{2\delta_0}
\parallel (TG)\big(\phi(t)-\tau,\cdot\big)\parallel_{L^{q_0}(\{x:\delta\leq\phi(t)-|x|\leq2\delta\})}^{\frac{q_0}{q_0-1}}\md\tau\Big)^{\frac{q_0-1}{q_0}}
\Big\|_{L^{q_0}_t} \\
&\leq C\delta_0^{\frac{1}{q_0}}\bigg\|\bigg(\int_{\delta_0}^{2\delta_0}\Big(\big(\phi(t)-\tau\big)^{\nu-\frac{m+4}{q_0(m+2)}}\delta^{-\nu-\frac{1}{q_0}}
\parallel G(\tau,\cdot)\parallel_{L^{\frac{q_0}{q_0-1}}}\Big)^{\frac{q_0}{q_0-1}}\md\tau\bigg)^{\frac{q_0-1}{q_0}}\bigg\|_{L^{q_0}_t} \\
&\leq C\delta_0^{\frac{1}{q_0}}\Big\|\phi(t)^{\nu-\frac{m+4}{q_0(m+2)}}\delta^{-\nu-\frac{1}{q_0}}\Big(\int_{\delta_0}^{2\delta_0}
\parallel G(\tau,\cdot)\parallel_{L^{\frac{q_0}{q_0-1}}}^{\frac{q_0}{q_0-1}}\md\tau\Big)^{\frac{q_0-1}{q_0}}\Big\|_{L^{q_0}_t} \\
&\leq C\delta_0^{\frac{1}{q_0}}\delta^{-\nu-\frac{1}{q_0}}\Big(\int_{\frac{T}{2}}^T\phi(t)^{q_0\nu-\frac{m+4}{m+2}}\md t\Big)^{\frac{1}{q_0}}
\parallel G\parallel_{L^{\frac{q_0}{q_0-1}}} \\
&\leq C\delta_0^{\frac{1}{q_0}}\delta^{-\nu-\frac{1}{q_0}}\phi(T)^{\nu-\frac{1}{q_0}}\parallel G\parallel_{L^{\frac{q_0}{q_0-1}}},
\end{split}
\end{equation*}
which derives  \eqref{equ:4.39}.
\end{proof}

\begin{lemma}\label{lem:a3}
\eqref{equ:4.66} holds.
\end{lemma}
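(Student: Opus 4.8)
The plan is to prove \eqref{equ:4.66} via Plancherel's identity in $x$, then polar coordinates in $\xi$, and finally the Sobolev trace theorem on spheres. Throughout set $\alpha=1+\nu$, abbreviate $\Lambda=\Lambda(t):=\phi(t)^{\alpha-1}\delta^{-\alpha}$ and $A:=\{y\in\R^n:\tfrac12\phi(1)\le|y|\le\phi(2)\}$, and note that since $\rho(\eta)\equiv1$ for $|\eta|\ge2$, the cut-off $1-\rho(\phi(t)^{1-\alpha}\delta^\alpha\xi)$ in \eqref{equ:4.115} is supported in $\{|\xi|\le2\Lambda\}$.

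First I would note that on the line $\operatorname{Re}z=0$, writing $z=i\theta$, the prefactor $c(z):=\big(z-\tfrac{(m+2)n+2}{2(m+2)}\big)e^{z^2}$ satisfies $|c(z)|\le C(1+|\theta|)e^{-\theta^2}\le C$, while $\big|\,|\xi|^{-z}\big|=1$. Consequently
\[
(R_zg)(t,x)=c(z)\int_{\R^n}e^{ix\cdot\xi}P(t,\xi)h(t,\xi)\,\md\xi,\qquad h(t,\xi):=\int_A e^{-iy\cdot\xi}e^{i|y||\xi|}g(y)\,\md y,
\]
with $|P(t,\xi)|\le\big(1+\phi(t)|\xi|\big)^{-\frac{m}{2(m+2)}}\mathbf 1_{\{|\xi|\le2\Lambda\}}$, so that Plancherel gives
\[
\|(R_zg)(t,\cdot)\|_{L^2(\R^n)}^2\le C\int_{|\xi|\le2\Lambda}\big(1+\phi(t)|\xi|\big)^{-\frac{m}{m+2}}|h(t,\xi)|^2\,\md\xi.
\]

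Next, for each fixed $r>0$ I would rewrite $h(t,r\omega)=\widehat{g_r}(r\omega)$, where $g_r(y):=e^{ir|y|}g(y)\mathbf 1_A(y)$ is supported in the \emph{fixed} ball $B(0,\phi(2))$ with $\|g_r\|_{L^2}\le\|g\|_{L^2}$. The Sobolev trace theorem for the restriction of $\widehat{g_r}$ to the sphere $\{|\xi|=r\}$ — whose constant, via $\|\widehat{g_r}\|_{H^{1/2+\varepsilon}}^2\le C\|g_r\|_{L^2}^2$, depends only on $\operatorname{supp}g_r$ and is therefore uniform in $r\ge1$ — together with the trivial bound $|\widehat{g_r}(r\omega)|\le\|g_r\|_{L^1}\le C\|g\|_{L^2}$ for $r\le1$, yields
\[
\int_{S^{n-1}}|h(t,r\omega)|^2\,\md\omega\le C(1+r)^{1-n}\|g\|_{L^2(\R^n)}^2.
\]
Passing to polar coordinates and inserting this reduces matters to
\[
\|(R_zg)(t,\cdot)\|_{L^2(\R^n)}^2\le C\|g\|_{L^2}^2\int_0^{2\Lambda}\big(1+\phi(t)r\big)^{-\frac{m}{m+2}}r^{n-1}(1+r)^{1-n}\,\md r.
\]

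Finally I would estimate this one-dimensional integral by splitting $[0,2\Lambda]$ into $[0,\phi(t)^{-1}]$, $[\phi(t)^{-1},1]$ and $[1,2\Lambda]$, using $n-1-\tfrac{m}{m+2}>-1$ on the second piece and $-\tfrac{m}{m+2}>-1$ on the third (both valid for $n\ge3$, $m\ge1$); the three contributions are $O(\phi(t)^{-n})$, $O(\phi(t)^{-m/(m+2)})$ and $O(\phi(t)^{-m/(m+2)}\Lambda^{2/(m+2)})$. Since $\delta$ is bounded above and $t\ge1$ in the regime where \eqref{equ:4.66} is applied, one has $\Lambda\gtrsim1$, so all three are dominated by $C\phi(t)^{-m/(m+2)}\Lambda^{2/(m+2)}$; taking square roots and recalling $\Lambda=\phi(t)^{\alpha-1}\delta^{-\alpha}$ gives \eqref{equ:4.66}. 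The only step that is not a routine computation is the uniform-in-$r$ spherical trace estimate; once that is in hand, the rest is bookkeeping with elementary integrals.
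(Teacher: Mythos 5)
Your proof is correct, and it takes a genuinely different route from the paper's. The paper argues by duality: it computes $R_z^*h$, applies a Besov-type spherical trace estimate (Lemma~3.2 of~\cite{Gls}) in the $y$-variable on the fixed annulus, and thereby reduces to estimating the dyadic sum $\|\hat H\|_{L^2(|\xi|\le1)}+\sum_{k\ge0}2^{k/2}\|\hat H\|_{L^2(2^k\le|\xi|\le2^{k+1})}$; the high-frequency part yields the factor $\phi(t)^{-\frac{m}{2(m+2)}}\big(\phi(t)^{\alpha-1}\delta^{-\alpha}\big)^{\frac{1}{m+2}}$ directly, while the low-frequency piece is handled via \eqref{equ:7.12}, which the paper derives from the polynomial estimate \eqref{equ:a.3} together with a Stone--Weierstrass density argument. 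You instead apply Plancherel directly in $x$ and use the $(1+r)^{1-n}$ decay of the spherical average $\int_{S^{n-1}}|\widehat{g_r}(r\omega)|^2\,\md\omega$, an Agmon--H\"ormander type trace estimate that is uniform in $r$ because $g_r$ is supported in a fixed compact set, so the whole proof collapses to a single elementary radial integral. Your justification of the uniform trace constant is a bit informal; the cleanest way to see it is to write $\widehat{g_r}=\hat\chi\ast\widehat{g_r}$ for a fixed $\chi\in C_0^\infty$ equal to $1$ on the annulus, apply Cauchy--Schwarz, and note $\sup_{r>0,\,\eta\in\R^n}\int_{|\xi|=r}|\hat\chi(\xi-\eta)|\,\md\sigma(\xi)<\infty$.

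Beyond being more streamlined, your route also avoids a step in the paper that does not hold as stated. The uniform bound \eqref{equ:7.12} for general $\hat h\in L^2(D)$, and hence \eqref{equ:a.3} with a constant independent of the polynomial $\hat h$, are false: for $\hat h$ the indicator of $\{|\xi|\le\phi(t)^{-1}\}$ the weight $\big(1+\phi(t)|\xi|\big)^{-\frac{m}{m+2}}$ is comparable to $1$ on $\operatorname{supp}\hat h$, so the left side of \eqref{equ:7.12} is comparable to $\|\hat h\|_{L^2(D)}$, which would force $1\lesssim\phi(t)^{-\frac{m}{2(m+2)}}$, a contradiction for large $\phi(t)$. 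Equivalently, polynomials concentrating near the origin such as $(1-|\xi|^2/n)^N$ with $N$ large violate \eqref{equ:a.3}: the $l$-uniformity of the constant established in \eqref{equ:a.2} does not extend beyond the monomials $\xi_1^l$. The conclusion \eqref{equ:4.66} remains true, as your Plancherel argument shows; in your version the contribution from $r\le1$ to the radial integral is simply dominated by the high-frequency contribution, so no separate low-frequency argument is ever needed.
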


\begin{proof}
We will apply Lemma 3.2 in \cite{Gls} and the dual argument to derive \eqref{equ:4.66}.
For $h\in L^2(\mathbb{R}^n)$, due to $(R_z^*h,\bar{g})=(h,\bar{R_zg})$, then
\[\parallel R_zg\parallel_{L^2}=\sup_{h\in L^2}\frac{\big|(h,\bar{R_zg})\big|}{\parallel h\parallel_{L^2}}\leq\sup_{h\in L^2}\frac{\parallel R_z^*h\parallel_{L^2}}{\parallel h\parallel_{L^2}}\parallel g\parallel_{L^2}.\]
Since
\begin{equation*}
\begin{split}
(R_zg)(t,x)=&\Big(z-\frac{(m+2)n+2}{2(m+2)}\Big)e^{z^2} \int_{\mathbb{R}^n}\int_{\frac{1}{2}\phi(1)\leq|y|\leq\phi(2)}e^{i((x-y)\cdot\xi-(\phi(t)-|y|)|\xi|)} \\
&\quad \times \Big(1-\rho\big(\phi(t)^{1-\alpha}\delta^\alpha\xi\big)\Big)\big(1+\phi(t)|\xi|\big)^{-\frac{m}{2(m+2)}}g(y)\frac{\md\xi}{|\xi|^z}\md y,
\end{split}
\end{equation*}
the dual operator of $R_zg$ is
\begin{equation*}
\begin{split}
(R_z^*h)&(y)=\Big(\bar{z}-\frac{(m+2)n+2}{2(m+2)}\Big)e^{\bar{z}^2}\int\int e^{i((y-x)\cdot\xi+(\phi(t)-|y|)|\xi|)} \\
&\qquad \times\Big(1-\rho\big(\phi(t)^{1-\alpha}\delta^\alpha\xi\big)\Big)\big(1+\phi(t)|\xi|\big)^{-\frac{m}{2(m+2)}}h(x)\frac{\md\xi}{|\xi|^{\bar{z}}}\md x\md\xi \\
&=(\bar{z}-\frac{(m+2)n+2}{2(m+2)})e^{\bar{z}^2}\int e^{i(y\cdot\xi-|y||\xi|)}e^{i\phi(t)|\xi|} \\
&\qquad \times\Big(1-\rho\big(\phi(t)^{1-\alpha}\delta^\alpha\xi\big)\Big)\big(1+\phi(t)|\xi|\big)^{-\frac{m}{2(m+2)}}
|\xi|^{-\bar{z}}\hat{h}(\xi)\md\xi.
\end{split}
\end{equation*}
Denote
\[\hat{H}(\xi)=e^{i\phi(t)|\xi|}\Big(1-\rho\big(\phi(t)^{1-\alpha}\delta^\alpha\xi\big)\Big)\big(1+\phi(t)|\xi|\big)^{-\frac{m}{2(m+2)}}|\xi|^{-\bar{z}}\hat{h}(\xi).\]
Then it follows from direct computation that
\begin{equation}\label{equ:a.1}
\begin{split}
&\parallel R_z^*g\parallel_{L^2(\frac{1}{2}\phi(1)\leq|y|\leq\phi(2))} \\
&\leq C\Big(\parallel\hat{H}\parallel_{L^2(|\xi|\leq1)}
+\sum_{k=0}^{\infty}2^{\frac{k}{2}}\parallel\hat{H}\parallel_{L^2(2^k\leq|\xi|\leq2^{k+1})}\Big)\qquad
(\text{by Lemma 3.2 of \cite{Gls}}) \\
&\leq C\Big(\parallel\hat{H}\parallel_{L^2(|\xi|\leq1)}
+\sum_{2^{k+1}\leq\phi(t)^{\alpha-1}\delta^{-\alpha}}^{\infty}2^{\frac{k}{2}}\parallel\hat{H}\parallel_{L^2(2^k\leq|\xi|\leq2^{k+1})}\Big).
\end{split}
\end{equation}
If $k\geq0$, then
\begin{equation}\label{equ:7.11}
\parallel\hat{H}\parallel_{L^2(2^k\leq|\xi|\leq2^{k+1})}\leq C2^{-k\frac{m}{2(m+2)}}\phi(t)^{-\frac{m}{2(m+2)}}\parallel\hat{h}\parallel_{L^2(2^k\leq|\xi|\leq2^{k+1})}.
\end{equation}
Next we estimate the remaining part $\parallel\hat{H}\parallel_{L^2(|\xi|\leq1)}$ in \eqref{equ:a.1}.
We first consider the special case, that is, $\hat{h}(\xi)$ is a polynomial. More specifically,
without loss of generality, we assume that $\hat{h}(\xi)=\xi_1^l$, \(l\in\mathbb{N}\).
Note that the integral domain $\{\xi: |\xi|\leq1\}$ is contained in the area $D=\{\xi:
|\xi_j|\leq1, j=1,...,n\}$. Set $D'=\{(\xi_2, ..., \xi_n): |\xi_j|\leq1, j=2,...,n\}$. Then we have that for $\hat{h}(\xi)=\xi_1^l$, \[\parallel\hat{H}\parallel_{L^2(|\xi|\leq1)}\le I=:\int_{D}\frac{\xi_1^{2l}}{\big(1+\phi(t)|\xi|\big)^{\frac{m}{m+2}}}\md\xi.\]
Note that $\xi=(\xi_1, \xi')$. Then
\begin{equation*}
  \begin{split}
    I & =\int_{D'}\md\xi'\int_{-1}^{1}\frac{\xi_1^{2l}}{(1+\phi(t)|\xi|)^{\frac{m}{m+2}}}\md\xi_1 \\
      & =\frac{2}{2l+1}\int_{D'}\frac{1}{(1+\phi(t)|\xi|)^{\frac{m}{m+2}}}\md\xi'\Big|_{\xi_1=1} \\
       &\qquad +\frac{m}{(2l+1)(m+2)}\int_{D'}\md\xi'\int_{-1}^{1}\frac{\xi_1^{2l+1}\frac{\xi_1}{|\xi|}\phi(t)}{(1+\phi(t)|\xi|)^{\frac{m}{m+2}+1}}
       \md\xi_1 \\
       & \leq\frac{2}{2l+1}\int_{D'}\frac{1}{(1+\phi(t))^{\frac{m}{m+2}}}\md\xi'
       +\frac{m}{(2l+1)(m+2)}\int_{D'}d\xi'\int_{-1}^{1}\frac{\xi_1^{2l}|\xi|\phi(t)}{(1+\phi(t)|\xi|)^{\frac{m}{m+2}+1}}\md\xi_1 \\
       & \leq\frac{2C_n}{2l+1}(1+\phi(t))^{-\frac{m}{m+2}}+\frac{m}{m+2}I,
  \end{split}
\end{equation*}
where $C_n=\int_{D'}d\xi'$ only depends on $n$. Thus we have
\begin{align*}
   & (1-\frac{m}{m+2})I\leq\frac{2C_n}{2l+1}(1+\phi(t))^{-\frac{m}{m+2}}=2(1+\phi(t))^{-\frac{m}{m+2}}\int_{D}\xi_1^{2l}d\xi.
 \end{align*}
 This yields
   \begin{align}\label{equ:a.2}
   & (\int_{D}\frac{\xi_1^{2l}}{(1+\phi(t)|\xi|)^{\frac{m}{m+2}}}d\xi)^{\frac{1}{2}}\leq C_m(1+\phi(t))^{-\frac{m}{2(m+2)}}
   \parallel\xi_1^l\parallel_{L^2(D)}.
\end{align}
Analogously, we can prove the same estimates \eqref{equ:a.2} for all the polynomial $\hat h(\xi)$:
\begin{align}\label{equ:a.3}
   & (\int_{D}\frac{|\hat h(\xi)|^2}{(1+\phi(t)|\xi|)^{\frac{m}{m+2}}}d\xi)^{\frac{1}{2}}\leq C_m(1+\phi(t))^{-\frac{m}{2(m+2)}}
   \parallel \hat h(\xi)\parallel_{L^2(D)}.
\end{align}
 For each fixed $t>0$ and nonzero $\hat h(\xi)\in L^2$, we can find a continuous function $g_t(\xi)$ such that \[\parallel\hat{h}-g_t\parallel_{L^2(D)}\leq\frac{1}{3}(1+\phi(t))^{-\frac{m}{2(m+2)}}\parallel\hat{h}\parallel_{L^2(D)}.\]
Since $D$ is compact, by Stone-Weierstrass theorem, we can find a polynomial $p_t(\xi)$ such that
\[\parallel p_t-g_t\parallel_{L^2(D)}\leq\frac{1}{3}(1+\phi(t))^{-\frac{m}{2(m+2)}}\parallel\hat{h}\parallel_{L^2(D)}.\]
Then we obtain the estimate
\begin{equation}\label{equ:7.12}
\parallel\hat{H}\parallel_{L^2(D)}\leq C(1+\phi(t))^{-\frac{m}{2(m+2)}}\parallel\hat{h}\parallel_{L^2(D)}.
\end{equation}
Combining \eqref{equ:7.11} and \eqref{equ:7.12} yields \eqref{equ:4.66}.
\end{proof}

\begin{lemma}\label{lem:a4}
\eqref{equ:4.67} holds true.
\end{lemma}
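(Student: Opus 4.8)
The plan is to mimic the proof of Lemma~\ref{lem:a3} (i.e.\ of \eqref{equ:4.66}), the two new ingredients being the frequency gain carried by the cut-off $\rho\big(\phi(t)^{1-\alpha}\delta^\alpha\xi\big)$ in \eqref{equ:4.115} and the fact that here the output is measured on the thin shell $A:=\{x:\delta\le\phi(t)-|x|\le2\delta\}$ rather than on all of $\mathbb{R}^n$. Since $\operatorname{Re}z=0$, writing $z=i\theta$ we have $\big|(z-\tfrac{(m+2)n+2}{2(m+2)})e^{z^2}\big|=\big|i\theta-\tfrac{(m+2)n+2}{2(m+2)}\big|e^{-\theta^2}\le C$ and $\big||\xi|^{-z}\big|=1$ uniformly in $\theta\in\mathbb{R}$, so these two factors are harmless. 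On $\operatorname{supp}\rho\big(\phi(t)^{1-\alpha}\delta^\alpha\,\cdot\,\big)$ one has $|\xi|\ge\Lambda:=\phi(t)^{\alpha-1}\delta^{-\alpha}$, hence $\phi(t)|\xi|\ge\phi(t)^\alpha\delta^{-\alpha}$ and $\big(1+\phi(t)|\xi|\big)^{-\frac{m}{2(m+2)}}\le\big(\phi(t)^\alpha\delta^{-\alpha}\big)^{-\frac{m}{2(m+2)}}$; also $\Lambda\delta=(\phi(t)/\delta)^{\nu}\gtrsim1$ in the regime under consideration. After pulling out the scalar $\big(\phi(t)^\alpha\delta^{-\alpha}\big)^{-\frac{m}{2(m+2)}}$, it remains to show that the Fourier integral operator $\mathcal S$ with phase $(x-y)\cdot\xi-(\phi(t)-|y|)|\xi|$ and amplitude $\rho\big(\phi(t)^{1-\alpha}\delta^\alpha\xi\big)\big(1+\phi(t)|\xi|\big)^{-\frac{m}{2(m+2)}}$, acting on functions supported in $\{y:\tfrac12\phi(1)\le|y|\le\phi(2)\}$, satisfies the \emph{unweighted} bound
\[
\|\mathcal Sg\|_{L^2(A)}\le C\,\delta^{-\frac12}\|g\|_{L^2(\mathbb{R}^n)};
\]
here I keep the factor $\big(1+\phi(t)|\xi|\big)^{-\frac{m}{2(m+2)}}$ inside the amplitude only to retain absolute convergence of the dyadic pieces introduced below, discarding it after Plancherel.

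To prove this I would decompose the $\xi$-integral dyadically, $|\xi|\simeq2^k\ge\Lambda$, and estimate each piece's $L^2(A)$-norm by combining two bounds: a stationary-phase / $L^1\!\to\! L^\infty$-type kernel estimate (for fixed $k$ the kernel concentrates, with rapid off-set decay, on the truncated cone $\{|x-y|=\phi(t)-|y|\}$, as in \eqref{equ:2.15} and \eqref{equ:4.51}) and an $L^2\!\to\! L^2$-type estimate obtained from Plancherel together with the restriction geometry of the light cone. Combining (or interpolating) these should equip the $2^k$-piece with a strictly negative power of $2^k$ multiplying $\delta^{-1/2}\|g\|_{L^2}$, so that summing the geometric series over $2^k\ge\Lambda$ (dominated by $2^k\simeq\Lambda$) closes the estimate. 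The restriction to the thin shell $A$ (of radius $\simeq\phi(t)$ and width $\simeq\delta$) is essential and is the source of the loss $\delta^{-1/2}$: schematically, $\|F\|_{L^2(A)}^2=\int_I\|F\|^2_{L^2(S_r)}\,\md r\le\delta\sup_r\|F\|^2_{L^2(S_r)}$, after which the powers balance. The whole scheme parallels the proof of Lemma~\ref{lem:a3}: there one dualized and invoked Lemma~3.2 of \cite{Gls}; here one may likewise dualize $\mathcal S$ into an operator mapping $L^2(A)$ to $L^2(\{\tfrac12\phi(1)\le|y|\le\phi(2)\})$ and apply a version of Lemma~3.2 of \cite{Gls} adapted to a shell of width $\delta$ (rather than width $1$), the thinness of the shell producing the factor $\delta^{-1/2}$.

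I expect the main obstacle to be the summability of the high-frequency dyadic pieces — that is, actually producing the strictly negative power of $2^k$. A crude kernel (Schur-test) bound on each piece is too lossy, because it does not exploit the oscillation of the $y$-integral along the cone that is already partly consumed in producing the off-cone decay; the remaining cancellation has to be recovered by an $L^2$-restriction/smoothing argument for the light cone, with the decay factor $\big(1+\phi(t)|\xi|\big)^{-\frac{m}{2(m+2)}}$ supplying additional room when $m\ge1$. Securing this negative power — so that the series converges and its sum has the exact size $\delta^{-\frac12}\big(\phi(t)^\alpha\delta^{-\alpha}\big)^{-\frac{m}{2(m+2)}}\|g\|_{L^2}$ — is the delicate step, and it is where the hypotheses $n\ge3$, $m\ge1$, $\alpha=1+\nu>1$, together with $\delta\le10\phi(2)$ and $\delta\le\delta\phi(T_0)\le\phi(T)$, come into play.
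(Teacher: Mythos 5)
Your proposal takes a genuinely different route from the paper's, and it contains an acknowledged gap at its critical step. The paper does \emph{not} decompose dyadically in frequency at this point. Instead it splits \emph{spatially} according to distance from the characteristic cone: it writes down the kernel $K_z(t;x,y)$ of $S_z$, notes that on its support $|\xi|\ge\phi(t)^{\alpha-1}\delta^{-\alpha}$, and observes that Lemma~3.3 of \cite{Gls} then gives, for any $N$, the rapid off-cone decay
\[
|K_z|\le C_N\Big(\frac{\delta}{\phi(t)}\Big)^{N}\big(\phi(t)^{\alpha}\delta^{-\alpha}\big)^{-\frac{m}{2(m+2)}}\qquad\text{when }\Big||x-y|-\big|\phi(t)-|y|\big|\Big|\ge\frac{\delta}{2},
\]
which immediately yields \eqref{equ:4.67} in that region; for the complementary near-cone region $\big||x-y|-|\phi(t)-|y||\big|<\tfrac{\delta}{2}$, the paper appeals to the machinery of Lemmas~3.4--3.5 and Proposition~3.6 of \cite{Gls}. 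Thus the weight $\delta^{-1/2}$ and the factor $(\phi(t)^\alpha\delta^{-\alpha})^{-\frac{m}{2(m+2)}}$ are obtained by separating the good (off-cone) region, where the kernel is negligible, from the bad (on-cone) region, where the full \cite{Gls} restriction argument is invoked.

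Your scheme — pulling out the scalar, decomposing dyadically in $|\xi|\simeq 2^k\ge\Lambda$, and combining $L^1\!\to\! L^\infty$ and $L^2\!\to\! L^2$ bounds to sum a geometric series — could plausibly be made to work, but you yourself identify the place where it is not yet a proof: producing a strictly negative power of $2^k$ on each dyadic piece after the $\delta^{-1/2}$ loss from the thin shell has been taken. Two concrete problems with the sketch as written. First, the plan to ``discard the decay factor after Plancherel'' would destroy exactly the $2^{-k m/(2(m+2))}$ gain you later invoke to close the series; you need to retain $(1+\phi(t)|\xi|)^{-\frac{m}{2(m+2)}}$ on each piece and only at the end sum the resulting geometric tail starting at $2^k\simeq\Lambda$, which produces the scalar $(\phi(t)^\alpha\delta^{-\alpha})^{-\frac{m}{2(m+2)}}$. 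Second, the asserted unweighted bound $\|\mathcal S_kg\|_{L^2(A)}\le C\,\delta^{-1/2}\|g\|_{L^2}$ per dyadic piece is precisely the restriction/cone-smoothing content of \cite{Gls} Lemma~3.2 and Lemmas~3.4--3.5, adapted to a shell of width $\delta$; this is not elementary and is the part of the argument the paper chooses to outsource rather than rederive. Until that per-piece bound (with the $2^{-k}$ gain) is proved, the proposal does not establish \eqref{equ:4.67}.
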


\begin{proof}
Denote $K_z$ by the kernel of the operator $S_z$. Then
\begin{equation}
\begin{split}
K_z(t;x,y)=&(z-\frac{(m+2)n+2}{2(m+2)})e^{z^2}\int_{\mathbb{R}^n}e^{i[(x-y)\cdot\xi-(\phi(t)-|y|)|\xi|]} \\ &\times\rho(\phi(t)^{1-\alpha}\delta^\alpha\xi)(1+\phi(t)|\xi|)^{-\frac{m}{2(m+2)}}\frac{d\xi}{|\xi|^z} \quad
\text{with $Rez=0$}.
\end{split}\label{equ:a.5}
\end{equation}
Note that $|\xi|\geq\phi(t)^{\alpha-1}\delta^{-\alpha}$ holds in the integral of
\eqref{equ:a.5}. Therefore it follows from Lemma 3.3 in \cite{Gls} and the assumption
of $\delta\leq10\phi(2)$ that for any $N\in\Bbb R^+$,
\begin{align*}
|K_z|&\le C_N\Big(\frac{\delta}{\phi(t)}\Big)^N(\phi(t)^\alpha\delta^{-\alpha})^{-\frac{m}{2(m+2)}}\leq C_N\Big(\frac{1}{\phi(t)}\Big)^N(\phi(t)^\alpha\delta^{-\alpha})^{-\frac{m}{2(m+2)}} \\
&\text{if}\quad \Big||x-y|-\big|\phi(t)-|y|\big|\Big|\geq\frac{\delta}{2}.
\end{align*}
This yields \eqref{equ:4.67} when $\Big||x-y|-\big|\phi(t)-|y|\big|\Big|\geq\frac{\delta}{2}$.
For the case of $\Big||x-y|-\big|\phi(t)-|y|\big|\Big|<\frac{\delta}{2}$,
analogously treated as in Lemma 3.4-Lemma 3.5 and Proposition 3.6 of \cite{Gls},
\eqref{equ:4.67} can be also derived, here we omit the detail since the proof procedure
is completely similar to that in \cite{Gls}.
\end{proof}

\begin{lemma}\label{lem:a1}
One has that for $\delta>0$,
\begin{equation}
\begin{split}
&\Big\|\int_{\Bbb R^n} e^{i(x\cdot\xi+t|\xi|)}\hat{f}(\xi)\md\xi\Big\|_{L^2(\{x:\delta\leq t-|x|\leq2\delta\})} \\
&\leq C\delta^{\frac{1}{2}}\Big(\|\hat{f}\parallel_{L^2(|\xi|\leq1)}
+\sum_{k=0}^{\infty}2^{\frac{k}{2}}\|\hat{f}\parallel_{L^2(2^k\leq|\xi|\leq2^{k+1})}\Big).
\end{split}\label{equ:a.6}
\end{equation}
\end{lemma}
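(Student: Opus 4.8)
The plan is to combine a Littlewood--Paley decomposition in the frequency variable with an elementary $L^2$ restriction estimate to spheres; the spherical restriction estimate produces the dyadic weights $2^{k/2}$, while the co-area formula in the radial variable produces the factor $\delta^{\frac12}$. The argument runs parallel to the proof of Lemma 3.2 in \cite{Gls}, of which this is the $\delta$-thin-shell version.

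First I would write $\hat f=\hat f_{-}+\sum_{k\ge0}\hat f_k$, where $\hat f_{-}$ is supported in $\{|\xi|\le1\}$ and $\hat f_k$ in $\{2^{k-1}\le|\xi|\le2^{k+1}\}$, using the partition of unity $\beta$ from \eqref{equ:2.13}; set $u_k(x)=\int_{\R^n}e^{i(x\cdot\xi+t|\xi|)}\hat f_k(\xi)\,\md\xi$ and likewise $u_{-}$. By Minkowski's inequality it then suffices to establish
\[
\|u_k\|_{L^2(\{\delta\le t-|x|\le2\delta\})}\le C\,\delta^{\frac12}2^{k/2}\|\hat f_k\|_{L^2(\R^n)},\qquad
\|u_{-}\|_{L^2(\{\delta\le t-|x|\le2\delta\})}\le C\,\delta^{\frac12}\|\hat f\|_{L^2(|\xi|\le1)},
\]
since after summing, the overlapping annuli $\{2^{k-1}\le|\xi|\le2^{k+1}\}$ are absorbed into two consecutive shells $\{2^k\le|\xi|\le2^{k+1}\}$.

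For the single-band estimate I would pass to polar coordinates in $x$ and bound
\[
\|u_k\|_{L^2(\{\delta\le t-|x|\le2\delta\})}^2=\int\Big(\int_{|x|=r}|u_k(x)|^2\,\md S(x)\Big)\md r\le\delta\,\sup_{r>0}\int_{|x|=r}|u_k(x)|^2\,\md S(x),
\]
where the $r$-integral runs over an interval of length at most $\delta$; this reduces everything to the spherical restriction bound $\int_{|x|=r}|u_k|^2\,\md S\le C\,2^k\|\hat f_k\|_{L^2}^2$, uniform in $r>0$. Because $e^{it|\xi|}$ is unimodular, $\|u_k\|_{L^2(\R^n)}=\|\hat f_k\|_{L^2}$, so it is enough to prove that any $g$ with $\operatorname{supp}\hat g\subseteq\{|\xi|\le\lambda\}$ obeys $\int_{|x|=r}|g|^2\,\md S\le C\lambda\|g\|_{L^2}^2$ uniformly in $r$. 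For this I would use the reproducing kernel: writing $g=\psi_\lambda*g$ with $\widehat{\psi_\lambda}\equiv1$ on $\{|\xi|\le\lambda\}$ and $|\psi_\lambda(x)|\le C_N\lambda^n(1+\lambda|x|)^{-N}$, the Cauchy--Schwarz inequality gives $|g(x)|^2\le\|\psi_\lambda\|_{L^1}\int|\psi_\lambda(x-y)|\,|g(y)|^2\,\md y$, and then Fubini together with the elementary bound $\int_{|x|=r}|\psi_\lambda(x-y)|\,\md S(x)\le C_n\lambda$ --- valid for all $y$ and all $r$, since the surface measure of $\{x:|x|=r,\ |x-y|\le s\}$ is $\le C_ns^{n-1}$ --- yields the claim. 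The low-frequency piece is the same with $\lambda\sim1$.

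The content is not a genuine obstacle so much as two observations that must be made correctly: that the radial co-area formula is the right reduction, so that restricting to a shell of radial width $\delta$ costs only $\delta^{\frac12}$; and that the spherical $L^2$ restriction estimate for frequency-localized functions holds with a constant independent of the radius $r$, which is exactly what produces the scale-invariant weights $2^{k/2}$ in the conclusion. Everything else --- the partition of unity, absorbing the overlapping annuli, and the final summation via Minkowski's inequality --- is routine.
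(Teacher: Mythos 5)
Your proof is correct, and it takes a somewhat different route from the paper's. The paper first performs the change of variables $x\mapsto x/(t-\delta)$, $\xi\mapsto(t-\delta)\xi$, so that the shell $\{\delta\le t-|x|\le2\delta\}$ becomes the subannulus $\{\frac{t-2\delta}{t-\delta}\le|x|\le1\}$ of the unit ball; it then applies the Agmon--H\"ormander--type sphere restriction estimate of Lemma 3.2 of \cite{Gls} directly to the rescaled function, integrates in the radial variable $r$ over an interval of length $\frac{\delta}{t-\delta}$, and finally unwinds the scaling, which is where the algebra $(\frac{\delta}{t-\delta})^{1/2}(t-\delta)^{1/2}=\delta^{1/2}$ and the dyadic weights $2^{j/2}$ re-emerge. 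You instead do the Littlewood--Paley decomposition on $\hat f$ from the start, and for each frequency band you establish the $r$-uniform spherical restriction bound $\int_{|x|=r}|g|^2\,dS\le C\lambda\|g\|_{L^2}^2$ for $\operatorname{supp}\hat g\subseteq\{|\xi|\le\lambda\}$ yourself, via the reproducing-kernel argument $g=\psi_\lambda*g$, Cauchy--Schwarz, Fubini, and the $r$-independent estimate $\int_{|x|=r}|\psi_\lambda(x-y)|\,dS\le C\lambda$ obtained by dyadically partitioning $|x-y|$ and using the cap-area bound. What the paper gains by scaling is that it can quote the restriction lemma of \cite{Gls} in exactly its original form; what you gain is a self-contained proof that never leaves the original coordinates and makes the provenance of the factor $\delta^{1/2}$ (radial width of the shell) and of $2^{k/2}$ (one half power of the frequency, as in any trace/restriction theorem) fully transparent. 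Both arguments are otherwise carried by the same two ingredients --- a uniform sphere restriction estimate with loss $\lambda^{1/2}$ per band, and a one-dimensional integration over the radial width of the shell --- so the content is the same even though the packaging differs.
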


\begin{proof}
Note that
\begin{equation*}
\begin{split}
&\Big\|\int_{\Bbb R^n}  e^{i(x\cdot\xi+t|\xi|)}\hat{f}(\xi)\md\xi\Big\|_{L^2(\{x:\delta\leq t-|x|\leq2\delta\})} \\
&=(t-\delta)^{-\frac{n}{2}}\Big\| \int e^{i(x\cdot\xi+\frac{t}{t-\delta}|\xi|)}\hat{f}\Big(\frac{\xi}{t-\delta}\Big)\md\xi\Big\|_{L^2(\{x:\frac{t-2\delta}{t-\delta}\leq |x|\leq1\})}.\\
\end{split}
\end{equation*}
By applying the Sobolev trace theorem to the function $\int e^{i(x\cdot\xi+\frac{t}{t-\delta}|\xi|)}\hat{f}(\frac{\xi}{t-\delta})d\xi$
for the variable $x$, we find that
\begin{equation}
\begin{split}
&\int_{\theta\in S^{n-1}}\Big|e^{i(r\theta\cdot\xi+\frac{t}{t-\delta}|\xi|)}\hat{f}\Big(\frac{\xi}{t-\delta}\Big)\md\xi\Big|^2\md\theta \\
&\leq C\Big(\Big\|\hat{f}\Big(\frac{\xi}{t-\delta}\Big)\Big\|_{L^2(|\xi|\leq1)}
+\sum_{j=0}^{\infty}2^{\frac{j}{2}}\Big\|\hat{f}\Big(\frac{\xi}{t-\delta}\Big)\Big\|_{L^2(2^j\leq|\xi|\leq2^{j+1})}\Big)^2.
\end{split}
\label{equ:B}
\end{equation}
Integrating  \eqref{equ:B} with respect to $r$ yields
\begin{equation*}
\begin{split}
&\Big\|\int_{\Bbb R^n}  e^{i(x\cdot\xi+t|\xi|)}\hat{f}(\xi)\md\xi\Big\|_{L^2(\{x:\delta\leq t-|x|\leq2\delta\})} \\
&\le \Big(\frac{t-2\delta}{t-\delta}\Big)^{\frac{n-1}{2}}\Big(\frac{\delta}{t-\delta}\Big)^{\frac{1}{2}}(t-\delta)^{-\frac{n}{2}}
\Big(\Big\|\hat{f}\Big(\frac{\xi}{t-\delta}\Big)\Big\|_{L^2(|\xi|\leq1)} \\
&\quad +\sum_{j=0}^{\infty}2^{\frac{j}{2}}\Big\|\hat{f}\Big(\frac{\xi}{t-\delta}\Big)\Big\|_{L^2(2^j\leq|\xi|\leq2^{j+1})}\Big) \\
\end{split}
\end{equation*}

\begin{equation*}
\begin{split}
&\leq C\Big(\frac{\delta}{t-\delta}\Big)^{\frac{1}{2}}(t-\delta)^{-\frac{n}{2}}
\Big((t-\delta)^{\frac{1}{2}}\Big\|\hat{f}\Big(\frac{\xi}{t-\delta}\Big)\Big\|_{L^2(|\xi|\leq t-\delta)} \\
&\quad +\sum_{2^j\geq t-\delta}2^{\frac{j}{2}}\Big\|\hat{f}\Big(\frac{\xi}{t-\delta}\Big)\Big\|_{L^2(2^j\leq|\xi|\leq2^{j+1})}\Big) \\
&\leq C\Big(\frac{\delta}{t-\delta}\Big)^{\frac{1}{2}}(t-\delta)^{\frac{1}{2}}\Big(\parallel\hat{f}(\xi)\parallel_{L^2(|\xi|\leq1)} \quad +\sum_{j=0}^{\infty}2^{\frac{j}{2}}\parallel\hat{f}(\xi)\parallel_{L^2(2^j\leq|\xi|\leq2^{j+1})}\Big) \\
&\leq C\delta^{\frac{1}{2}}\Big(\parallel\hat{f}(\xi)\parallel_{L^2(|\xi|\leq1)}
\quad +\sum_{j=0}^{\infty}2^{\frac{j}{2}}\parallel\hat{f}(\xi)\parallel_{L^2(2^j\leq|\xi|\leq2^{j+1})}\Big).
\end{split}
\end{equation*}
Thus, the proof of \eqref{equ:a.6} is completed.

\end{proof}

\end{document}